\tikzset{node distance=2cm, auto}
\tikzstyle{vertex}=[circle, draw, inner sep=0pt, minimum size=6pt]
\numberwithin{equation}{section}
\newtheorem*{theorem*}{Theorem}
\newtheorem*{corollary*}{\bf Corollary}
\newtheorem*{remark*}{\bf Remark}
\newtheorem{theorem}{Theorem}[section]
\newtheorem{corollary}[theorem]{Corollary}
\newtheorem{example}[theorem]{Example}
\newtheorem{lemma}[theorem]{Lemma}
\newtheorem{proposition}[theorem]{Proposition}
\newtheorem{remark}[theorem]{Remark}
\newcommand{\eat}[1]{}
\title{on Torus quotients of Schubert varieties in Orthogonal Grassmannian}
\author[A. Nayek]{Arpita Nayek}
\address{Arpita Nayek\\
	Department of Mathematics\\ IIT Bombay, Powai, Mumbai\\ 400076, India.}
\email{arpitan@math.iitb.ac.in}
\author[P. Saha]{Pinakinath Saha}
\address{Pinakinath Saha\\
	Department of Mathematics\\ IIT Bombay, Powai, Mumbai \\ 400076, India.}
\email{psaha@math.iitb.ac.in}
\subjclass[2010]{14M15}
\begin{document}
	\begin{abstract}
		Let $G=Spin(8n, \mathbb{C})(n\ge 1)$ and $T_{G}$ be a maximal torus of $G.$ Let $P^{\alpha_{4n}}(\supset T_{G})$ be the maximal parabolic subgroup of $G$ corresponding to the simple root $\alpha_{4n}.$ Let $X$ be a Schubert variety in $G/P^{\alpha_{4n}}$ admitting semi-stable point with respect to the $T$-linearized very ample line bundle $\mathcal{L}(2\omega_{4n}).$ Let $R=\bigoplus_{k \in \mathbb{Z}_{\geq 0}}R_k,$ where $R_k=H^{0}(X, \mathcal{L}^{\otimes k}(2\omega_{4n}))^{T_{G}}.$ In this article, we prove that for $n=1$ and $X=G/P^{\alpha_4},$ the graded $\mathbb{C}$-algebra $R$ is generated by $R_1.$ As a consequence, we prove that the GIT quotient of $G/P^{\alpha_{4}}$ is projectively normal with respect to the descent of the $T_{G}$-linearized very ample line bundle $\mathcal{L}(2\omega_{4})$ and is isomorphic to the projective space $(\mathbb{P}^{2},\mathcal{O}_{\mathbb{P}^{2}}(1))$ as a polarized variety. Further,  we prove that $R$ is generated by $R_1$ and $R_2$ for some Schubert varieties in $G/P^{\alpha_{4n}}$ (for $n \geq 2$). As a consequence, we prove that the GIT quotient of those Schubert varieties are projectively normal with respect to the descent of the $T_G$-linearized very ample line bundle $\mathcal{L}(4\omega_{4n}).$  
		
		Moreover, for $G = Spin(2n,\mathbb{C})(n \ge 4)$ (respectively, $G=Sp(2n, \mathbb{C}) (n\ge 2)$) and a maximal torus $T_G$ of $G,$  we prove that the GIT quotient of $G/P^{\alpha_{1}}$ is projectively normal with respect to the descent of the $T_G$-linearized very ample line bundle $\mathcal{L}(2\omega_{1})$ and is isomorphic to the projective space $(\mathbb{P}^{n-2},\mathcal{O}_{\mathbb{P}^{n-2}}(1))$ (respectively, $(\mathbb{P}^{n-1},\mathcal{O}_{\mathbb{P}^{n-1}}(1))$ as a polarized variety.

	\end{abstract}
	
	\keywords{Orthogonal Grassmannian, GIT-quotient, Line bundle, Semi-stable point}
	\subjclass[2010]{14M15}	
	\maketitle
	\section{Introduction}\label{section1}
	
	Let $V= \mathbb{C}^{2n}$ together with a non-degenerate symmetric bilinear form $(-,-).$ Let $\{e_{1},\ldots , e_{2n} \}$ be the standard basis of $V.$ Let $E=\begin{pmatrix}
		0& J\\
		J&0\\
	\end{pmatrix}$ be the matrix of
	the form $(-,- )$ with respect to the standard basis $\{e_{1},\ldots , e_{2n} \},$ where
	
	$J=\begin{pmatrix}
		0&0&\cdots& 0& 1\\
		0&0&\cdots& 1& 0\\
		\vdots&\vdots &\cdots&\vdots &\vdots\\
		0&1&\cdots& 0& 0\\
		1&0&\cdots& 0& 0\\
	\end{pmatrix}$
	of size $n\times n.$ 
	
	We may realize $G=SO(V)$ as the fixed
	point set ${H}^{\sigma},$  where $H=SL(V)$ and $\sigma : H\longrightarrow H$ is given by $$\sigma(A)= E(A^{t})^{-1}E,$$
	where $A\in H$ and $A^{t}$ denote transpose of $A.$
	
	Let $T_{H}$ (respectively, $B_{H}$) be the maximal torus in $H$ consisting of diagonal matrices (respectively, the Borel subgroup in $H$ consisting of upper triangular matrices). We see easily that $T_{H},$ $B_{H}$ are stable under $\sigma.$ Let $T_{G}=T_{H}^{\sigma},$ and $B_{G}=B_{H}^{\sigma}.$ Then $T_{G}$ is a maximal torus in $G$ and $B_{G}$ is a Borel subgroup in $G.$

	Let $N_{G}(T_{G})$ (respectively, $N_{H}(T_{H})$) be the normalizer in $G$ (respectively, $H$) of $T_{G}$ (respectively, $T_{H}$). Then we have $N_{G}(T_{G})\subseteq N_{H}(T_{H}).$ Further, $N_{H}(T_{H})$ is stable under $\sigma,$ and we have $N_{G}(T_{G})={N_{H}(T_{H})}^{\sigma}.$ Hence, we have a map $$N_{G}(T_{G})/T_{G}\longrightarrow N_{H}(T_{H})/T_{H}.$$
	Thus, we obtain a homomorphism 
	$$W_{G}\longrightarrow W_{H},$$
	where $W_{G},$ $W_{H}$ denote the Weyl groups of $G,$ $H$ respectively (with respect to $T_{G},$  $T_{H}$
	respectively). 
	
	The Weyl group $W_{H}$ is identified with $S_{2n},$ the symmetric group on the $2n$ letters $\{1,2,\ldots,2n\}.$
	Further, $\sigma$ induces an involution $W_{H}\longrightarrow W_{H}$ given by
	$$w=(a_{1},\ldots , a_{2n})\mapsto \sigma(w)=(c_{1},\ldots , c_{2n}),$$
	where $w=(a_{1},\ldots , a_{2n})$ is written in one line notation of the permutation and $c_{i} = 2n + 1-a_{2n+1-i}.$ Thus, we have $W_{G} =\{ w \in W_{H}^{\sigma}: ~w~ \text {is an even permutation in}~ W_{H}  \}.$ In one line notation, we have
	$W_{G}=\{(a_{1},\ldots,  a_{2n}) \in  S_{2n} : a_{i}=2n+ 1-a_{2n+1-i}, 1 \le i \le 2n \text{ and } ~m_{w}~ \text {is even}\},$
	where $m_{w}=\#\{ i\in \{1,\ldots, n\}: a_{i} > n\}.$ Thus, $w=(a_{1},\ldots, a_{2n})\in  W_{G}$ is known once $(a_{1}, \ldots, a_{n})$ is known.

	Let $X(T_{H})$(respectively, $X(T_{G})$) be the group of characters of $T_{H}$ (respectively, $T_{G}$). For $1\le i\le 2n,$ the elements $\epsilon_{i}\in X(T_{H})$ are defined by 
	$$\epsilon_{i}(t)=t_i,$$ where $t=\begin{pmatrix}t_1&0&\cdots&0&0\\
		0&t_2&\cdots&0&0\\
		\vdots&\vdots&\cdots &\vdots&\vdots\\
		0&0&\cdots &t_{2n-1}&0\\
		0&0&\cdots&0&t_{2n}\\
		
	\end{pmatrix}\in T_{H}.$ 
	
	Then the set $R_{H}=\{\epsilon_{i}-\epsilon_j: i\neq j\}$ can be identified with the set of roots of $H$ with respect to $T_{H}$ and that $R_{H}^{+}=\{\epsilon_{i}-\epsilon_{j}: 1\le i<j\le 2n\}$ is the set of positive roots with respect to $B_{H}.$ For $1\le i\le 2n-1,$ we denote $\beta_i=\epsilon_{i}-\epsilon_{i+1}.$ Then $S_{H}=\{\beta_{i}: 1\le i\le 2n-1\}$ is the set of simple roots of $R_{H}.$ 
	
	Note that $\sigma$ induces an involution 
	\begin{center}$\sigma: X(T_{H})\longrightarrow X(T_{H}),$\end{center} on $X(T_{H})$ defined by \begin{center}$\chi\mapsto\sigma(\chi),$\end{center} where $\sigma(\chi)(t)=\chi(\sigma(t))$ for $t \in T_{H}.$ 
	
	Then we have $\sigma(\epsilon_{i})=-\epsilon_{2n+1-i}$ for $1\le i\le 2n.$ Since $T_{G}\subset T_{H},$ there is a surjective map $\varphi: X(T_{H})\longrightarrow X(T_{G}),$ defined by $$\varphi(\epsilon_{i})=-\varphi(\epsilon_{2n+1-i})$$ for $1\le i\le 2n.$

	Then $\sigma$ leaves $R_{H}$ (respectively, $R_{H}^{+}$) stable. Let $R_G$ (respectively, $R_G^{+}$) be the set of roots (respectively, positive roots) of $G$ with respect to $T_{G}$ (respectively, $B_G$). Then from the explicit nature of the adjoint representation of $G$ on $\mathfrak{g}$ (Lie algebra of $G$) it follows that $R_{G}=\varphi(R_{H}\setminus R_{H}^{\sigma})$ and $R_{G}^{+}=\varphi(R_{H}^{+}\setminus {R_{H}^{+}}^{\sigma}).$ In particular, we have  $R_G$(respectively, $R_{G}^{+}$) can be  identified with the orbit space of $R_{H}$ (respectively, $R_{H}^{+}$) under the action of $\sigma$ minus the fixed point set under $\sigma.$ We see now that $R^{+}(G)$ can be identified with the subset $\{\epsilon_{i}-\epsilon_j: 1\le i<j\le n\}\cup \{\epsilon_{i} + \epsilon_{j}:1\le i<j\le n\}$ of $X(H)$ through $\varphi.$ For $1\le i\le n-1,$ we denote $\alpha_{i}=\epsilon_{i}-\epsilon_{i+1}$ and $\alpha_{n}=\epsilon_{n-1}+\epsilon_{n}.$ Then $S_{G}=\{\alpha_{i}: 1\le i\le n\}$ is the set of simple roots of $R_{G}.$ 
	
	Let $Q^{\beta_{n}}$ (respectively, $P^{\alpha_{n}}$) be the maximal parabolic subgroup of $H$ (respectively, $G$) corresponding to the simple root $\beta_{n}$ (respectively, $\alpha_{n}$). Let $W_{Q^{\beta_{n}}}$ (respectively, $W_{P^{\alpha_{n}}}$) be the Weyl group of the $Q^{\beta_{n}}$ (respectively, $P^{\alpha_{n}}$). Then the Schubert varieties in $H/Q^{\beta_{n}}$ (respectively, $G/P^{\alpha_{n}}$) are parameterized by the minimal coset representatives of $W_{H}/W_{Q^{\beta_{n}}}$ (respectively, $W_{G}/W_{P^{\alpha_{n}}}$). We denote the set of all minimal coset representatives of $W_{H}/W_{Q^{\beta_{n}}}$ (respectively, $W_{G}/W_{P^{\alpha_{n}}}$) by $W^{Q^{\beta_{n}}}$ (respectively, $W^{P^{\alpha_{n}}}$).
	
	For $v\in W_{H}^{Q^{\beta_{n}}},$ let $X_{H}(v):=\overline{B_{H}vQ^{\beta_{n}}/Q^{\beta_{n}}}$  denote the Schubert variety in $H/Q^{\beta_{n}}$  corresponding to $v.$ For $w\in {W_{G}}^{P^{\alpha_{n}}},$ let $X_{G}(w):=\overline{B_{G}wP^{\alpha_{n}}/P^{\alpha_{n}}}$ denote the Schubert variety in $G/P^{\alpha_{n}}$ corresponding to $w.$ 
	
	Let $\omega'_{n}$ (respectively, $\omega_{n}$) be the fundamental weight corresponding to the simple root $\beta_{n}$ (respectively, $\alpha_{n}$) in $H$ (respectively, in $G$). Note that the very ample generator of the Picard group of $H/Q^{\beta_{n}}$ (respectively, of $G/P^{\alpha_{n}}$) is the line bundle $\mathcal{L}(\omega'_{n})$ (respectively, $\mathcal{L}(\omega_n)$). Further, we note that the restriction of the very ample line bundle $\mathcal{L}(\omega'_{n})$ via the inclusion $G/P^{\alpha_{n}}\subset H/Q^{\alpha_{n}}$ is the very ample line bundle $\mathcal{L}(2\omega_{n}).$

	In \cite{NS}, the GIT quotients of the Schubert varieties in $H/Q^{\beta_{n}}$ for the action of $T_{H}$ is considered. In \cite{NS}, we have shown that for $n\ge 3,$ there exists a Schubert variety $X_{H}(v)$ in $H/Q^{\beta_{n}}$ admitting semi-stable point such that the GIT quotient of $X_{H}(v)$ is not projectively normal with respect to the descent of the $T_{H}$-linearized very ample line bundle $\mathcal{L}(2\omega'_{n}).$ As a consequence, we have concluded that for any Schubert variety $X_{H}(w)$ in $H/Q^{\beta_{n}}$ containing $X_{H}(v),$ the GIT quotient of $X_{H}(w)$ is not projectively normal with respect to the descent of the $T_H$-linearized very ample line bundle $\mathcal{L}(2\omega'_{n}).$ In particular, the GIT quotient of $H/Q^{\beta_{n}}(n \ge 3)$ is not projectively normal with respect to the descent of the $T_H$-linearized very ample line bundle $\mathcal{L}(2\omega'_{n}).$
	On the other hand, it is easy to see that for $n=1$ or $2,$ GIT quotient of $H/Q^{\beta_{n}}$ is projectively normal with respect to the descent of the $T_H$-linearized very ample line bundle $\mathcal{L}(2\omega'_{n}).$ 
	
	In this article, we make an attempt to study the GIT quotients of the Schubert varieties in $G/P^{\alpha_{4n}}$ for the action of $T_{G},$ where $G=Spin(8n,\mathbb{C}).$ Let $X_{G}(w)$ be a Schubert variety in $G/P^{\alpha_{4n}}$ admitting semi-stable point with respect to the $T_{G}$-linearized very ample line bundle $\mathcal{L}(2\omega_{n}).$ Let $R=\bigoplus_{k\in \mathbb{Z}_{\ge 0}} R_{k},$ where $R_{k}=H^0(X_{G}(w), \mathcal{L}^{\otimes k}(2\omega_{n}))^{T_G}.$ Note that $R$ is a graded $\mathbb{C}$-algebra.
	
	Then it is interesting to ask the following questions:
	\begin{itemize}
		\item [(1)] What is the degree bounds of the generators of the graded $\mathbb{C}$-algebra $R?$
	\end{itemize}
	\begin{itemize}
		\item [(2)] Is the graded $\mathbb{C}$-subalgebra $\bigoplus_{k\in \mathbb{Z}_{\ge0}}R_{2k}$ of $R$ generated by $R_{2}?$
	\end{itemize} 
	
	In this article, we give a partial answer to the above question. The main theorems of this article are the following:
	\begin{theorem}\label{thm1.1}
		Let $G=Spin(8,\mathbb{C}).$ Then the GIT quotient of $G/P^{\alpha_{4}}$ is projectively normal with respect to the descent of the $T_G$-linearized very ample line bundle $\mathcal{L}(2\omega_{4})$ and is isomorphic to the projective space $(\mathbb{P}^{2}, \mathcal{O}_{\mathbb{P}^2}(1))$ as a polarized variety. 
	\end{theorem}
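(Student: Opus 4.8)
The plan is to pass through the Borel--Weil description of the section spaces together with standard monomial theory, reducing the statement to a finite weight computation and a single generation argument. Since $\alpha_4$ is a minuscule node of $D_4$, the bundle $\mathcal{L}(\omega_4)$ is minuscule, so $\dim R_k=\dim H^0(G/P^{\alpha_4},\mathcal{L}(2k\omega_4))^{T_G}$ equals the dimension of the zero weight space of $V(2k\omega_4)$. I would first record the basis of $V(\omega_4)$ given by the eight minuscule weights $\tfrac12(\pm\epsilon_1\pm\epsilon_2\pm\epsilon_3\pm\epsilon_4)$ with an even number of minus signs, and check the simple-root covering relations to identify the poset $W^{P^{\alpha_4}}$ as the chain $\tau_1>\tau_2>\tau_3$, followed by the diamond $\tau_3>\{\tau_4,\tau_5\}>\tau_6$, followed by the chain $\tau_6>\tau_7>\tau_8$, with $\mu_{\tau_i}+\mu_{\tau_{9-i}}=0$ for every $i$.

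The first computation is the Hilbert function of $R$. Standard monomial theory identifies a basis of $H^0(G/P^{\alpha_4},\mathcal{L}(2k\omega_4))$ with chains $\tau_{i_1}\ge\cdots\ge\tau_{i_{2k}}$ in this poset, the $T_G$-weight of a monomial being $\sum_j\mu_{\tau_{i_j}}$, so $R_k$ is spanned by the weight-zero chains. I would evaluate $\dim R_k$ via the zero-weight multiplicity of $V(2k\omega_4)$, most cleanly through the triality identification $V(2k\omega_4)\cong V(2k\omega_1)$: the zero weight space of $\mathrm{Sym}^{2k}(\mathbb{C}^8)$ has dimension $\binom{k+3}{3}$, and the harmonic decomposition gives $\dim V(2k\omega_1)_0=\binom{k+3}{3}-\binom{k+2}{3}=\binom{k+2}{2}$. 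In particular $\dim R_1=3$, and among the weight-zero degree-two standard monomials only the three antipodal \emph{comparable} pairs survive, namely $f_1=p_{\tau_1}p_{\tau_8}$, $f_2=p_{\tau_2}p_{\tau_7}$, $f_3=p_{\tau_3}p_{\tau_6}$; the pair $\{\tau_4,\tau_5\}$ is incomparable, so $p_{\tau_4}p_{\tau_5}$ is not standard. These three give an explicit basis of $R_1$.

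The heart of the argument is to show that $f_1,f_2,f_3$ generate $R$. Rather than straighten arbitrary weight-zero monomials, I would argue by dimension: it suffices to prove $f_1,f_2,f_3$ algebraically independent, for then $\mathbb{C}[f_1,f_2,f_3]\hookrightarrow R$ is injective with $k$-th graded piece of dimension $\binom{k+2}{2}=\dim R_k$, forcing $R=\mathbb{C}[f_1,f_2,f_3]$ and in particular generation by $R_1$. Algebraic independence is equivalent to dominance of $[f_1:f_2:f_3]\colon G/P^{\alpha_4}\dashrightarrow\mathbb{P}^2$. Using the triality model $G/P^{\alpha_4}\cong\{x_1x_8+x_2x_7+x_3x_6+x_4x_5=0\}\subset\mathbb{P}^7$, under which $f_i=x_ix_{9-i}$, one checks that for arbitrary $(a,b,c)$ the point $(x_1,\dots,x_8)=(a,b,c,1,-(a+b+c),1,1,1)$ lies on the quadric and maps to $[a:b:c]$, proving surjectivity and hence independence. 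I expect this to be the main obstacle: producing the three invariants and certifying that they are independent, equivalently that the quotient is genuinely two-dimensional, since the invariant subring of a projectively normal cone need not be generated in degree one---indeed the companion results in \cite{NS} show such generation can fail.

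Finally I would assemble the conclusion. The identity $R=\mathbb{C}[f_1,f_2,f_3]$ exhibits $R$ as a polynomial ring generated by $R_1$, so the GIT quotient $\mathrm{Proj}(R)\cong\mathbb{P}^2$, and by construction the descent of $\mathcal{L}(2\omega_4)$ corresponds to $\mathcal{O}_{\mathrm{Proj}(R)}(1)$, giving the polarized isomorphism $(\mathbb{P}^2,\mathcal{O}_{\mathbb{P}^2}(1))$. Projective normality is then immediate, as $R$ is a polynomial ring, hence integrally closed and generated in degree one; equivalently all maps $\mathrm{Sym}^k(R_1)\to R_k$ are isomorphisms. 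This yields Theorem~\ref{thm1.1}.
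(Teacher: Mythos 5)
Your proposal is correct, but it takes a genuinely different route from the paper. The paper works entirely inside standard monomial theory: it lists the eight possible rows of a $T_G$-invariant standard tableau of degree $k$, converts the zero-weight condition $c_\Gamma(t)=c_\Gamma(9-t)$ into a linear system in the eight row multiplicities, and solves it to show each row occurs exactly as often as its antipodal partner, so every invariant standard monomial contains one of the four degree-one invariant two-row subtableaux $\Gamma_1,\dots,\Gamma_4$ as a factor; generation in degree one together with $\dim X=\ell(w)-4=2$ (from nonemptiness of the stable locus) then forces $R$ to be the polynomial ring on $\Gamma_1,\Gamma_2,\Gamma_3$. You instead compute the full Hilbert function $\dim R_k=\binom{k+2}{2}$ via Borel--Weil, triality, and the harmonic decomposition of $\mathrm{Sym}^{2k}(\mathbb{C}^8)$, and certify that three explicit degree-one invariants are algebraically independent by exhibiting, on the quadric model of the spinor variety, a preimage of every point $[a{:}b{:}c]\in\mathbb{P}^2$; comparison of Hilbert functions then gives $R=\mathbb{C}[f_1,f_2,f_3]$ at once. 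Your route is shorter, avoids the straightening relations entirely, and makes the answer $\binom{k+2}{2}$ conceptually transparent; its cost is that it is tied to the full homogeneous space, since triality and the zero-weight multiplicity count do not restrict to Schubert subvarieties, whereas the paper's factorization argument immediately yields the corollaries for $X_G(w_1)$ and $X_G(w_2)$ by surjectivity of the restriction maps. Both arguments are complete proofs of the theorem as stated.
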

	
	\begin{proposition}\label{prop1}
		Let $G=Spin(2n,\mathbb{C})(n\ge 4).$ Then the GIT quotient of $G/P^{\alpha_{1}}$ is projectively normal with respect to the descent of the $T_G$-linearized very ample line bundle $\mathcal{L}(2\omega_{1})$ and is isomorphic to the projective space $(\mathbb{P}^{n-2},\mathcal{O}_{\mathbb{P}^{n-2}}(1))$ as a polarized variety.
	\end{proposition}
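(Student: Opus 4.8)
The plan is to identify the GIT quotient of $G/P^{\alpha_1}$ for $G = Spin(2n,\mathbb{C})$ by explicitly analyzing the geometry of the quadric $G/P^{\alpha_1}$ together with the $T_G$-action and the semistable locus. First I would recall that $G/P^{\alpha_1}$ is the smooth quadric hypersurface $\mathcal{Q} \subset \mathbb{P}(V) = \mathbb{P}^{2n-1}$ cut out by the symmetric form $(-,-)$, and that the very ample generator $\mathcal{L}(\omega_1)$ restricts to $\mathcal{O}_{\mathcal{Q}}(1)$; thus sections of $\mathcal{L}^{\otimes k}(2\omega_1) = \mathcal{L}(2k\omega_1)$ are degree-$2k$ forms restricted to $\mathcal{Q}$. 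The key computation is to determine the weight-zero (i.e. $T_G$-invariant) part of $H^0(\mathcal{Q}, \mathcal{O}(2k))$. Since $T_G$ acts diagonally on $V = \mathbb{C}^{2n}$ with weights $\epsilon_1,\dots,\epsilon_n,-\epsilon_n,\dots,-\epsilon_1$, the weight of a monomial $\prod x_i^{a_i}$ in homogeneous coordinates $x_1,\dots,x_{2n}$ is $\sum_{i=1}^{n}(a_i - a_{2n+1-i})\epsilon_i$, so the $T_G$-invariant monomials of degree $2k$ are exactly those with $a_i = a_{2n+1-i}$ for all $i$, i.e. products of the $n$ invariant quadratic monomials $y_i := x_i x_{2n+1-i}$ (for $1 \le i \le n$) raised to total degree $k$.

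Next I would pin down the ring structure modulo the quadric relation. The defining quadratic form, written in the standard coordinates determined by the matrix $E$, is (up to scalar) $q = \sum_{i=1}^{2n} x_i x_{2n+1-i} = 2\sum_{i=1}^{n} y_i$, so restricting to $\mathcal{Q}$ imposes the single linear relation $\sum_{i=1}^{n} y_i = 0$ among the invariant generators $y_1,\dots,y_n$. Therefore the invariant ring $R = \bigoplus_k H^0(\mathcal{Q}, \mathcal{O}(2k))^{T_G}$ is the polynomial ring $\mathbb{C}[y_1,\dots,y_n]/(y_1 + \dots + y_n)$, which is a polynomial ring in $n-1$ variables (I would take $\overline{y}_1,\dots,\overline{y}_{n-1}$ as the images of $y_1,\dots,y_{n-1}$). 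This shows at once that $R$ is generated in degree $1$, hence the quotient is projectively normal with respect to the descent of $\mathcal{L}(2\omega_1)$, and that $\mathrm{Proj}\, R \cong \mathbb{P}^{n-2}$ with the polarization $\mathcal{O}_{\mathbb{P}^{n-2}}(1)$ coming from the degree-$1$ piece $R_1$, giving the claimed polarized isomorphism.

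The one genuine subtlety I would need to address carefully is whether the quotient described by the homogeneous invariant ring really matches the honest GIT quotient $\mathcal{Q}^{ss}_{T_G}(\mathcal{L}(2\omega_1)) /\!/ T_G$; that is, I must verify that $\mathrm{Proj}$ of the invariant ring computes the GIT quotient over the semistable locus. This is the standard GIT statement that, because $\mathcal{L}(2\omega_1)$ is $T_G$-linearized and very ample, the semistable locus is nonempty (which is exactly the hypothesis under which the quotient is discussed) and $\bigoplus_k H^0(\mathcal{Q}, \mathcal{L}^{\otimes k}(2\omega_1))^{T_G}$ is precisely the homogeneous coordinate ring of the GIT quotient; I expect the main obstacle to be bookkeeping rather than conceptual, namely confirming the normalization of the bilinear form so that $q$ is exactly $\sum_i x_i x_{2n+1-i}$ and verifying that no additional invariants arise beyond the $y_i$ (which follows from the explicit weight count above). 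The symplectic case $G = Sp(2n,\mathbb{C})$ is handled identically except that $G/P^{\alpha_1} = \mathbb{P}^{2n-1}$ itself (there is no quadric relation to impose since $P^{\alpha_1}$ for $Sp$ is the stabilizer of a line and the minuscule representation is all of $V$), so the invariant ring is the full polynomial ring $\mathbb{C}[y_1,\dots,y_n]$ in $n$ variables, yielding $\mathbb{P}^{n-1}$ with its natural polarization.
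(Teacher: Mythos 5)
Your argument is correct, but it takes a genuinely different route from the paper. The paper works entirely inside standard monomial theory: a $T_G$-invariant standard monomial of degree $k$ is a single-column tableau with $4k$ boxes built from trivial admissible pairs, the $Spin$-standardness condition forces every entry to avoid $\{n,n+1\}$, and the mirror condition $c_\Gamma(t)=c_\Gamma(2n+1-t)$ forces the top and bottom pairs to be $j$ and $2n+1-j$, so the degree-one tableau $X_j$ splits off as a factor; generation in degree one plus $\dim X=n-2$ then identifies $R$ with a polynomial ring in the $n-1$ generators $X_1,\dots,X_{n-1}$. You instead use the geometric model $G/P^{\alpha_1}\cong\mathcal{Q}\subset\mathbb{P}^{2n-1}$, compute the weight-zero monomials directly as products of the $y_i=x_ix_{2n+1-i}$, and observe that the quadric relation is itself the invariant $\sum_i y_i$, so $R\cong\mathbb{C}[y_1,\dots,y_n]/(\sum_i y_i)$. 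The two answers agree: your $y_j$ is exactly the paper's tableau $X_j$ (a trivial admissible pair $(j,j)$ followed by $(2n+1-j,2n+1-j)$), and your linear relation $y_n=-\sum_{j<n}y_j$ is what the paper's combinatorics encodes by excluding $n$ and $n+1$ as entries. Your approach is shorter, explains conceptually why $\dim R_1=n-1$, and handles the $Sp(2n,\mathbb{C})$ case (Proposition 1.3) in the same breath, where the absence of a quadric relation gives the full polynomial ring and $\mathbb{P}^{n-1}$; its only extra debts are the standard facts that the quadric is projectively normal (so that $H^0(\mathcal{Q},\mathcal{O}(2k))$ is the degree-$2k$ part of $\mathbb{C}[x]/(q)$) and that taking $T_G$-invariants commutes with reduction mod the invariant element $q$. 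The paper's SMT proof buys uniformity with the much harder $Spin(8n,\mathbb{C})/P^{\alpha_{4n}}$ computations in Sections 3--4, where no comparably simple geometric model is available.
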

	
	\begin{proposition}\label{prop2}
		Let $G=Sp(2n,\mathbb{C})(n\ge 2)$ and $T_G$ be a maximal torus of $G.$ Then the GIT quotient of $G/P^{\alpha_{1}}$ is projectively normal with respect to the descent of the $T_G$-linearized very ample line bundle $\mathcal{L}(2\omega_{1})$ and is isomorphic to the projective space $(\mathbb{P}^{n-1},\mathcal{O}_{\mathbb{P}^{n-1}}(1))$ as a polarized variety.
	\end{proposition}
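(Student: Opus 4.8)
The plan is to identify $G/P^{\alpha_1}$ explicitly and then compute the invariant ring $R$ directly as a ring of torus-invariant polynomials. First, since the symplectic form is alternating, every one-dimensional subspace of $V=\mathbb{C}^{2n}$ is isotropic, so $Sp(2n,\mathbb{C})$ acts transitively on lines and $G/P^{\alpha_1}=\mathbb{P}(V)=\mathbb{P}^{2n-1}$, where $V=V(\omega_1)$ is the standard representation. The very ample generator of the Picard group is $\mathcal{L}(\omega_1)=\mathcal{O}_{\mathbb{P}^{2n-1}}(1)$, hence $\mathcal{L}(2\omega_1)=\mathcal{O}_{\mathbb{P}^{2n-1}}(2)$ and
\[
R_k = H^0\big(\mathbb{P}^{2n-1},\mathcal{O}(2k)\big)^{T_G} = \big(\mathrm{Sym}^{2k}V^*\big)^{T_G}.
\]

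Next, I would pin down the $T_G$-invariants by a weight computation. The torus $T_G$ acts on $V$ with weights $\epsilon_1,\ldots,\epsilon_n,-\epsilon_n,\ldots,-\epsilon_1$, each occurring exactly once, so on $V^*$ we may choose coordinates $x_1,\ldots,x_{2n}$ in which $x_i$ and $x_{2n+1-i}$ carry opposite weights $\mp\epsilon_i$ for $1\le i\le n$. A monomial in the $x_i$ is $T_G$-invariant precisely when it has weight zero, i.e. when within each pair $\{i,2n+1-i\}$ the two exponents agree. Consequently the invariant monomials are exactly the products of the $n$ quadrics $z_i:=x_i x_{2n+1-i}$ $(1\le i\le n)$, each of weight zero and degree $2$; these are algebraically independent, so
\[
R=\bigoplus_{k\ge 0}R_k \cong \mathbb{C}[z_1,\ldots,z_n],
\]
a polynomial ring in $n$ variables with every $z_i\in R_1$. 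In particular $\dim_{\mathbb{C}}R_1=n$ and $R$ is generated by $R_1$.

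Finally, I would read off the geometric conclusion. Because the GIT quotient is $\mathrm{Proj}(R)$ and $R$ is the polynomial ring $\mathbb{C}[z_1,\ldots,z_n]$ generated in degree one, the quotient is $\mathrm{Proj}(\mathbb{C}[z_1,\ldots,z_n])=\mathbb{P}^{n-1}$; the descent of $\mathcal{L}(2\omega_1)$ is the bundle whose sections recover $R_1$, namely $\mathcal{O}_{\mathbb{P}^{n-1}}(1)$, giving the asserted isomorphism of polarized varieties $(\mathbb{P}^{n-1},\mathcal{O}_{\mathbb{P}^{n-1}}(1))$. Projective normality is then automatic: a projective variety is projectively normal for a given embedding exactly when its homogeneous coordinate ring is integrally closed and generated in degree one, and both hold for the polynomial ring $R$.

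The genuinely delicate point in such arguments is usually establishing that the invariant ring is generated in low degree; here the alternating nature of the symplectic form collapses this difficulty, since the weights of $V$ come in exact $\pm$ pairs and the invariants are visibly a free polynomial algebra on the $n$ weight-zero quadrics (note that this also shows the semistable locus is nonempty, as $R_1\neq 0$). The only remaining care is to confirm that $\mathcal{L}(2\omega_1)$, rather than merely some power of it, descends and that its descent is the degree-one polarization; this follows once one knows $R$ is generated by $R_1$, which the computation above supplies. I therefore expect no substantial obstacle beyond the bookkeeping of the torus weights.
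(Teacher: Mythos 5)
Your proof is correct, but it takes a different (and more elementary) route than the paper. You identify $G/P^{\alpha_1}$ with $\mathbb{P}^{2n-1}=\mathbb{P}(V)$ using the transitivity of $Sp(2n,\mathbb{C})$ on lines, so that $R_k=(\mathrm{Sym}^{2k}V^*)^{T_G}$, and then the weight-zero condition forces every invariant monomial to be a product of the $n$ quadrics $z_i=x_ix_{2n+1-i}$, giving $R\cong\mathbb{C}[z_1,\dots,z_n]$ on the nose. (Implicitly you are using that $\mathrm{Sym}^kV$ is the irreducible $Sp(2n)$-module of highest weight $k\omega_1$, so that $H^0(\mathbb{P}^{2n-1},\mathcal{O}(k))=H^0(G/P^{\alpha_1},\mathcal{L}(k\omega_1))$; that is true and worth stating.) The paper instead runs the argument through standard monomial theory: $R_k$ has a basis of single-column tableaux of length $4k$ with trivial admissible pairs, the $T_G$-invariance condition is $c_\Gamma(t)=c_\Gamma(2n+1-t)$, and the largest and smallest entries pair off to split off a degree-one factor $X_j$; the polynomial-ring structure is then deduced from the dimension count $\dim X=n-1$ rather than read off directly. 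The two computations are combinatorially identical, but yours makes the freeness of $R$ transparent and avoids SMT, while the paper's is uniform with the $Spin$ cases treated elsewhere, where no identification with a projective space is available. One small correction: the descent of $\mathcal{L}(2\omega_1)$ to the GIT quotient does not ``follow once one knows $R$ is generated by $R_1$''; descent is a statement about stabilizers acting trivially on fibers over closed orbits and is supplied here by Kumar's theorem (Theorem~\ref{thm2.2}(ii)), which the paper quotes and which you should cite rather than derive from the ring structure. With that reference in place, your argument is complete.
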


	The layout of the article is as follows. In Section 2, we recall some preliminaries on algebraic groups, Lie algebras, Standard Monomial Theory and Geometric Invariant Theory. In Section 3, we prove \cref{thm1.1}. In Section 4,  we prove that graded $\mathbb{C}$-algebra $R$ is generated by $R_{1}$ and $R_{2}$ for some Schubert varieties $X_{G}(w)$ in $G/P^{\alpha_{4n}}(n\ge 2)$ admitting semi-stable point with respect to the $T_{G}$-linearized very ample line bundle $\mathcal{L}(2\omega_{4n}).$ As a consequence, we prove  projective normality results for those Schubert varieties with respect to the descent of the $T_{G}$-linearized very ample line bundle $\mathcal{L}(4\omega_{4n}).$ In Section 5, we prove \cref{prop1} and \cref{prop2}.

	\section{Notation and Preliminaries}\label{section2}
	In this section, we set up some notation and recall some preliminaries. We refer to \cite{Hum1}, \cite{Hum2}, \cite{Jan}, \cite{LS}, \cite{Mumford}, \cite{New} for preliminaries in algebraic groups, Lie algebras, Standard Monomial Theory, and Geometric Invariant Theory.

	Let $G, T_G, B_G, R_G$ and $R_G^{+}$ be as in the previous section. 
	
	Let $\mathfrak{g}$ be the Lie algebra of $G.$ Let $\mathfrak{h}_{\mathfrak{g}} \subset \mathfrak{g}$ be the Lie algebra of $T_{G}$ and $\mathfrak{b}_{\mathfrak{g}}$ be the Lie algebra of $B_{G}$. Let $Y(T_{G})$ denote the group of one-parameter subgroups of $T_{G}.$

	Let $E_1:=X(T_{G}) \otimes \mathbb{R},$ $E_2:=Y(T_{G}) \otimes \mathbb{R}$ and $\langle - , -\rangle: E_1 \times E_2 \longrightarrow \mathbb{R}$ be the canonical non-degenerate bilinear form. Let $\bar{C}:=\{\lambda \in E_2 : \langle \alpha, \lambda \rangle \geq 0 \text{ for all } \alpha \in R_{G}^{+}\}.$ Note that for each $\alpha \in R_{G},$ there is a homomorphism $\phi_{\alpha}:SL(2,\mathbb{C}) \longrightarrow G.$ We define $\check{\alpha}:\mathbb{G}_m \longrightarrow G$ by $\check{\alpha}(t)=\phi_{\alpha}(\begin{pmatrix}
		t & 0\\
		0 & t^{-1}
	\end{pmatrix}).$ We also have $s_{\alpha}(\chi)=\chi - \langle \chi, \check{\alpha} \rangle \alpha$ for all $\alpha \in R_{G}$ and $\chi \in E_1.$ 
	
	Let $\{\omega_i: i= 1,2, \ldots, n\} \subset E_1$ be the fundamental weights; i.e. $\langle \omega_i, \check{\alpha_j} \rangle=\delta_{ij}$ for all $i,j=1, 2, \ldots, n.$ There is a natural partial order $\leq$ on $X(T_{G})$ defined by $\psi\le \chi $ if and only if $\chi -\psi$ is a non-negative integral linear combination of simple roots.

	Let $\mathcal{L}$ be a $T_{G}$-linealized ample line bundle on $G/P^{\alpha_{n}}.$ We also denote the restriction of the line bundle $\mathcal{L}$ on $X_{G}(w)$ by $\mathcal{L}.$ 
	
	A point $p \in X_{G}(w)$ is said to be a semi-stable point with respect to the $T_{G}$-linearized line bundle $\mathcal{L}$ if there is a $T_{G}$-invariant section $s \in H^0(X_{G}(w),\mathcal{L}^{\otimes m})$ for some positive integer $m$ such that $s(p)\neq 0.$ We denote the set of all semi-stable points of $X_{G}(w)$ with respect to $\mathcal{L}$ by $X_{G}(w)^{ss}_{T_{G}}(\mathcal{L}).$ 
	
	A point $p$ in $X_{G}(w)^{ss}_{T_{G}}(\mathcal{L})$ is said to be a stable point if $T_{G}$-orbit of $p$ is closed in $X_{G}(w)^{ss}_{T_{G}}(\mathcal{L})$ and stabilizer of $p$ in $T_{G}$ is finite. We denote the set of all stable points of $X_{G}(w)$ with respect to $\mathcal{L}$ by $X_G(w)^{s}_{T_{G}}(\mathcal{L}).$ 
	
	Now, we recall the definition of projective normality of a projective variety.
	
	Let $X$ be a projective variety in $\mathbb{P}^{m}.$ We denote by $\hat{X}$ the affine cone of $X.$ Then $X$ is said to be projectively normal if $\hat{X}$ is normal (see \cite[Chapter I, Exercise 3.18, p.23]{R}). For the practical purpose we need the following fact about projective normality of a polarized variety.
	
	A polarized variety $(X,\mathcal{L}),$ where $\mathcal{L}$ is a very-ample line bundle is said to be projectively normal if its homogeneous coordinate ring $\bigoplus_{k\in \mathbb{Z}_{\geq 0}} H^{0}(X,\mathcal{L}^{\otimes k})$ is integrally closed and is generated as a $\mathbb{C}$-algebra by $H^0(X,\mathcal{L})$ (see \cite[Chapter II, Exercise 5.14, p.126]{R}).

	Let us recall some well-known properties of $Spin(2n, \mathbb{C}).$ They can be found in the standard references such as \cite{LS}.
	
	Let us denote the length of $w$ as an element of  $W_{H}$ (respectively, $W_{G}$) by $\ell(w, W_{H})$ (respectively, $\ell(w, W_{G})$). Then for $w=(a_{1},\ldots , a_{2n} )\in W_{G},$ we have $\ell(w, W_{G})=\frac{1}{2}(\ell(w,W_{H})- m_{w}).$ Further, the Bruhat-Chevalley order $~``\le"~$ on $W^{P^{\alpha_{n}}}$ is induced by the Bruhat-Chevalley order $~``\le"~$ on $W^{Q^{\beta_{n}}}.$
	
	For $1\le d\le 2n,$ define the set $I_{d,2n}:=\{\underline{i}=(i_{1},\ldots, i_{d}):1 \le i_1 < \cdots < i_d \le 2n \}.$ Then there is a partial order $"\le "$ on $I_{d,2n}$ defined by $\underline{i}\le \underline{j}$ if and only if  $i_{t}\le j_{t}$ for all $1\le t\le d.$ Further, there is a natural order preserving indentification of $W^{Q^{\beta_{n}}}$
	with $I_{n, 2n},$ the correspondence is given by $w\in W^{Q^{\beta_{n}}}$ mapping to $(w(1),\ldots ,w(n)).$

	Let $Z$ be the subgroup of $H$ consisting of matrices of the form $\begin{pmatrix} Id_{n} & 0\\
		Y& Id_{n}
	\end{pmatrix},$
	where $Y\in M_{n}$ the $n\times n$ matrices with entries in $\mathbb{C}.$ Then the canonical morphism $\psi_{H}:H\longrightarrow H/Q^{\beta_{n}}$ induces a morphism $\psi_{H}: Z\longrightarrow H/Q^{\beta_{n}}.$ Further, $\psi_{H}$ is an open immersion and $\psi_{H}$ is identified with the opposite big cell $O_{H}^{-}$ in $H/Q^{\beta_{n}}.$ Moreover, we have $O_{H}^{-}$ is $\sigma$-stable and we have an identification $Sk M_{n}\simeq (O_{H}^{-})^{\sigma},$ where $SkM_{n}$ is the space of skew symmetric $n\times n$ matrices. Further, $\psi_{H}$ induces an inclusion $\psi_{H}:G/P^{\alpha_{n}}\longrightarrow H/Q^{\beta_{n}}.$

	Note that $\mathcal{L}(\omega'_{n})$ is the tautological line bundle on $H/Q^{\beta_{n}}$($\simeq G_{n, 2n},$ the Grassmannian of $n$-dimensional subspaces of $V=\mathbb{C}^{2n}$). Then $H^0(H/Q^{\beta_{n}}, \mathcal{L}(\omega'_{n}))$ is the dual of the irreducible $H$-module with highest weight $\omega_{n}.$ Let $Id=(1,..., n)\in I_{n,2n}$ denote $f:= p_{Id}.$ Then we have
	$f$ is a lowest weight vector in $H^0(H/Q^{\beta_{n}}, \mathcal{L}(\omega'_{n}))$ and $O_{H}^{-}$ is the principal open set of $(H/Q^{\beta_{n}})_{f}.$

	Let $Y\in M_{n}.$ Given two $s$-tuples $A:=(a_{1},...,a_{s}),$ and $B:=(b_{1}, ..., b_{s})$ in $I_{s, n},$ $s\le n.$  Let $p(A, B)(Y)$ be the $s$-minors of $Y$ with row and column indices given by $A,B$ respectively. Consider the following identification 
	\begin{center}
		$M_{n}\simeq \bigg\{\begin{pmatrix}
			Id_{n}\\JY
		\end{pmatrix}: Y\in M_{n} \bigg\}\simeq {O_{H}}^{-},$
	\end{center} 
	where $J$ is the anti-diagonal matrix $(1,...,1)$ of size $n\times n.$
	
	Let $\underline{i}:=(i_{1},...,i_{n})\in I_{n,2n},$ and $p_{\underline{i}}$ be the associated Pl\"ucker co-ordinate on $G_{n,2n}(\simeq H/Q^{\beta_{n}}).$ Let $f_{\underline{i}}$ denote the restriction of $p_{\underline{i}}$ to $M_{n}$ under the above identification. Then $f_{\underline{i}}(Y)=p({\underline{i}}(A), {\underline{i}}(B)),$ where ${\underline{i}}(A), {\underline{i}}(B)$ are given as follows:
	
	Let $r$ be such that $i_{r}\le n, i_{r+1}>n.$ Let $s=n-r.$ Then $\underline{i}(A)$ is the $s$-tuple given by $(2n+1-i_{n},...,2n+1-i_{r+1}),$ while $\underline{i}(B)$ is the $s$-tuple given by the complement of $(i_{1},...,i_{r})$ in $(1,...,n).$ Then the pair $(\underline{i}(A), \underline{i}(B))$ is the canonical dual pair associated to $\underline{i}.$
	
	Let $\mathcal{L}$ be the restriction of $\mathcal{L}(\omega'_n)$ to $G/P^{\alpha_{n}}.$ For $\underline{i}\in I_{n,2n},$ let $p'_{\underline{i}}$ be the restriction of $p_{\underline{i}}$ to $G/P^{\alpha_{n}}.$ We note that $p'_{\underline{i}}\in H^0(G/P^{\alpha_{n}}, \mathcal{L}).$ Let $p(\underline{i}(A), \underline{i}(B))$ be the restriction of $p_{\underline{i}}$ to $M_{n},$ $M_{n}$ being identified with the opposite cell $O^{-}_{H}$ as above. Let $p'(\underline{i}(A), \underline{i}(B))$ be the restriction of $p(\underline{i}(A), \underline{i}(B))$ to $SkM_{n}.$ Let $f=p_{Id}$ be a lowest weight vector in $H^0(H/Q^{\beta_{n}}, \mathcal{L}(\omega'_{n}))$ and $f'$ the restriction of $f$ to $G/P^{\alpha_{n}}.$ Let $Y\in SkM_{n}.$ Consider $\underline{i}\in I_{n,2n}$ such that $\underline{i}(A)=\underline{i}(B).$ Then $p'(\underline{i}(A), \underline{i}(A))(Y)$ is a principal minor of the skew symmetric matrix $Y.$ Hence, $p'(\underline{i}(A), \underline{i}(A))(Y)$ is a square denoting $q_{\underline{i}}(Y)$ the corresponding Pfaffian (i.e., $q_{\underline{i}}^{2}(Y)=p'(\underline{i}(A), \underline{i}(A))(Y)$), we obtain a regular function 
	$q_{\underline{i}}: SkM_{n}\longrightarrow \mathbb{C}.$ Let $r=\#\underline{i}(A).$ Then we have that $q_{\underline{i}}$ is non zero if only if $r$ is even (since the determinant of a skew symmetric $r\times r$ matrix is zero, if $r$ is odd). Thus, for $\underline{i}\in I_{n, 2n}$ such that $\underline{i}(A)=\underline{i}(B),$ we have a regular function $q_{\underline{i}}: SkM_{n}\longrightarrow \mathbb{C}$ such that $q^{2}_{\underline{i}}=p'(\underline{i}(A), \underline{i}(A)).$ Further, $q_{\underline{i}}$ is non-zero if and only if $\#\underline{i}(A)$ is even. Also, we have $\mathcal{L}^{2}=\mathcal{L}(\omega'_{n})$  and the identification $Z^{\sigma}\simeq O^{-}_{G}\simeq SkM_{n}.$ Thus, the opposite cell in $H/Q^{\beta_{n}}$ induces the opposite cell in $G/P^{\alpha_{n}}.$

	Now, we recall the following theorems due to Shrawan Kumar describes which $T_G$-linearized very ample line bundle $\mathcal{L}_{\lambda}$ on $G/P^{\alpha_{4n}}$ descends to the GIT quotient $T_{G}\backslash\backslash(G/P^{\alpha_{4n}})_{T_{G}}^{ss}(\mathcal{L}_{\lambda})$ (see \cite[Theorem 3.10, p.764]{Kum}), where $G=Spin(8n, \mathbb{C})$ and $\mathcal{L}_{\lambda}$ is the line bundle associated to the regular dominant character $\lambda$ of $P^{\alpha_{4n}}.$ We note that Kumar's result in \cite{Kum} is more
	general than what is presented here.
	
	\begin{theorem}\label{thm2.1}
		$\mathcal{L}_{\lambda}$ descends to a line bundle on the GIT quotient $T_{G}\backslash\backslash(G/P^{\alpha_{4n}})_{T_{G}}^{ss}(\mathcal{L}_{\lambda})$ if and only if
		\begin{itemize}
			\item[(i)] for $G=Spin(8, \mathbb{C}):$ $\lambda=2m\omega_4$ for some $m\in \mathbb{N},$
			\item[(ii)] for $G=Spin(8n, \mathbb{C})$ $(n \geq 2):$ $\lambda=4m\omega_{4n}$ for some $m\in \mathbb{N}.$
		\end{itemize}
	\end{theorem}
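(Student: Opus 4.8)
The plan is to obtain Theorem~\ref{thm2.1} as the specialization of Kumar's general descent criterion (Theorem 3.10 of \cite{Kum}) to $G=Spin(8n,\mathbb{C})$ and the maximal parabolic $P^{\alpha_{4n}}$, the real content being a rank-dependent lattice computation in type $D_{4n}$. First I would record the reduction: since $P^{\alpha_{4n}}$ is maximal, $\mathrm{Pic}(G/P^{\alpha_{4n}})=\mathbb{Z}\,\mathcal{L}(\omega_{4n})$, so every $T_{G}$-linearized very ample line bundle has the form $\mathcal{L}_{\lambda}=\mathcal{L}(c\omega_{4n})$ with $c\in\mathbb{N}$, and $\lambda=c\omega_{4n}$ is automatically regular dominant for $P^{\alpha_{4n}}$. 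Thus the statement is equivalent to determining the set of $c$ for which $\mathcal{L}(c\omega_{4n})$ descends, and the claim is that this set is $2\mathbb{N}$ when $n=1$ and $4\mathbb{N}$ when $n\ge 2$. The descent itself I would phrase through Kempf's descent lemma, exactly as in \cite{Kum}: $\mathcal{L}_{\lambda}$ descends to $T_{G}\backslash\backslash(G/P^{\alpha_{4n}})^{ss}_{T_{G}}(\mathcal{L}_{\lambda})$ if and only if at every point $x$ of the semistable locus whose $T_{G}$-orbit is closed, the isotropy group $(T_{G})_{x}$ acts trivially on the fibre $\mathcal{L}_{\lambda,x}$.

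The next step is to translate this into a lattice condition, using the embedding $G/P^{\alpha_{4n}}\hookrightarrow \mathbb{P}(V(c\omega_{4n}))$. For a closed-orbit semistable $x=[v]$ with $T_{G}$-weight support $\Pi=\mathrm{supp}(v)\subseteq\mathrm{wt}(V(c\omega_{4n}))$, the connected stabiliser is the subtorus cut out by the differences $\{\mu-\mu':\mu,\mu'\in\Pi\}$; it acts trivially on the fibre automatically, so the only obstruction comes from the finite component group, and it vanishes precisely when the fibre weight $\mu_{0}\in\Pi$ lies in the \emph{integral} lattice $L(x)=\langle \mu-\mu':\mu,\mu'\in\Pi\rangle\subseteq X(T_{G})$. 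The essential point is that, although $G/P^{\alpha_{4n}}$ is minuscule so that $V(\omega_{4n})$ has only extreme weights, the higher module $V(c\omega_{4n})$ is not minuscule, and a closed-orbit semistable point can be supported on interior weights as well; all weight differences then lie in the root lattice $Q(D_{4n})$. Identifying $\omega_{4n}$ with the half-spin weight $\tfrac12(\epsilon_{1}+\cdots+\epsilon_{4n})$ and $W_{G}$ with the even-sign signed permutations, the extreme fibre weight is $\mu_{0}=\tfrac{c}{2}(\pm\epsilon_{1}\pm\cdots\pm\epsilon_{4n})$, and since $\mu_{0}\equiv c\omega_{4n}\pmod{Q(D_{4n})}$ while $\omega_{4n}$ has order two in the weight-lattice-modulo-root-lattice, membership $\mu_{0}\in L(x)\subseteq Q(D_{4n})$ already forces $2\mid c$. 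Hence $2\mid c$ is necessary in all cases, and the problem reduces to deciding, for the worst (that is, the finest-constrained) realised configuration $\Pi$, whether $L(x)$ can be taken as large as $Q(D_{4n})$ or is compelled to be a proper sublattice.

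The decisive and hardest step is therefore to identify which weight supports $\Pi$ are actually realised by closed-orbit semistable points on $G/P^{\alpha_{4n}}$ — this is the geometric input that Kumar's theorem packages, ruling out balanced configurations (such as antipodal pairs of extreme weight vectors) that do not occur on the homogeneous space. Here I would work in the opposite big cell $O_{G}^{-}\simeq SkM_{4n}$ with the Pfaffian coordinates $q_{\underline{i}}$ recalled above, exhibit the minimal realised balanced configurations, and compute both $L(x)$ and $\mu_{0}$. For $n=1$ the small rank forces every such worst configuration to generate the full root lattice $Q(D_{4})$, so that $\mu_{0}\in L(x)$ holds exactly when $2\mid c$, yielding case~(i); the exceptional triality symmetry of $D_{4}$, placing the three minuscule weights on an equal footing, is what keeps the constraint from tightening. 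For $n\ge 2$ the larger rank ($4n\ge 8$) makes available an additional realised closed-orbit semistable point $x_{0}$ whose support lies in a single coset of $2\mathbb{Z}^{4n}$, so that its difference lattice is a proper sublattice $L_{0}\subsetneq Q(D_{4n})$ for which membership of the fibre weight $\mu_{0}=\tfrac{c}{2}(\epsilon_{1}+\cdots+\epsilon_{4n})$ is equivalent to $4\mid c$; combined with a check that no configuration tightens the constraint beyond this, we obtain case~(ii). I expect the main obstacle to be precisely this combinatorial bookkeeping: constructing $x_{0}$ explicitly in $SkM_{4n}$, verifying that $\mu_{0}\in L_{0}$ is equivalent to $4\mid c$, and confirming that for $n=1$ no analogue of $x_{0}$ exists so that the binding congruence drops back to $2\mid c$.
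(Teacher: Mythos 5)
The first thing to say is that the paper does not prove this statement at all: Theorem~\ref{thm2.1} is recalled verbatim as a special case of Kumar's descent theorem (see \cite[Theorem 3.10, p.764]{Kum}), specialized to $G=Spin(8n,\mathbb{C})$, the maximal parabolic $P^{\alpha_{4n}}$, and $\lambda$ a multiple of $\omega_{4n}$; the authors explicitly note that Kumar's result is more general than what is presented. So your proposal is not an alternative to the paper's argument --- it is an attempt to reconstruct the proof of the cited result. Your overall strategy (Kempf's descent lemma; reduction to the condition that the common fibre weight $\mu_{0}$ lie in the difference lattice $L(x)$ of the weight support of each closed-orbit semistable point; the observation that $\mu_{0}\equiv c\omega_{4n}\bmod Q(D_{4n})$ together with $\omega_{4n}$ having order two in $P/Q$ forces $2\mid c$) is indeed the skeleton of Kumar's proof, and the individual reductions you state (maximality of $P^{\alpha_{4n}}$ giving $\lambda=c\omega_{4n}$; the annihilator of $T_{x}$ in $X(T_{G})$ being exactly the subgroup $L(x)$ because $\mathbb{C}^{*}$ contains all roots of unity) are correct.

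The genuine gap is that the decisive step is only announced, never carried out. Everything hinges on: (a) exhibiting, for $n\ge 2$, a semistable point $x_{0}$ with closed $T_{G}$-orbit whose difference lattice $L_{0}$ satisfies $\tfrac{c}{2}(\epsilon_{1}+\cdots+\epsilon_{4n})\in L_{0}$ if and only if $4\mid c$; (b) proving that when $4\mid c$ (resp.\ $2\mid c$ for $n=1$) \emph{every} realised configuration $\Pi$ satisfies $\mu_{0}\in L(x)$; and (c) showing that no analogue of $x_{0}$ exists for $D_{4}$. None of these is done: no point of $SkM_{4n}$ is written down, no criterion certifying that a candidate weight support is actually realised by a closed orbit inside the semistable locus (Kempf--Ness, or the characterisation of closed $T$-orbits via $0$ lying in the relative interior of the weight polytope) is invoked, and the appeal to triality for $n=1$ is a heuristic rather than an argument. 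Since this classification of realised supports is precisely the content that \cite{Kum} supplies, the proposal as written is a plan rather than a proof; to complete it you would either have to import the lemmas leading to Kumar's Theorem 3.10 (at which point one may as well cite the theorem, as the paper does) or carry out the $D_{4n}$ computation explicitly, including the verification that the congruence $4\mid c$ is not only necessary for $n\ge 2$ but also sufficient.
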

	\begin{theorem}\label{thm2.2}
		\begin{itemize}
			\item[(i)] Assume that $G=Spin(2n, \mathbb{C}) (n \geq 4)$ and $P^{\alpha_{1}}$ is the maximal parabolic subgroup of $G$ corresponding to the simple root $\alpha_{1}$ containing maximal torus $T_{G}$ of $G.$ Then $\mathcal{L}_{\lambda}$ descends to a line bundle on the GIT quotient $T_{G}\backslash\backslash(G/P^{\alpha_{1}})_{T_{G}}^{ss}(\mathcal{L}_{\lambda})$ if and only if $\lambda=2m\omega_1$ for some $m\in \mathbb{N},$
			
			\item[(ii)] Assume that $G=Sp(2n, \mathbb{C})$ $(n \geq 2)$ and $P^{\alpha_{1}}$ is the maximal parabolic subgroup of $G$ corresponding to the simple root $\alpha_{1}$ containing  maximal torus $T_{G}$ of $G.$ Then $\mathcal{L}_{\lambda}$ descends to a line bundle on the GIT quotient $T_{G}\backslash\backslash(G/P^{\alpha_{1}})_{T_{G}}^{ss}(\mathcal{L}_{\lambda})$ if and only if $\lambda=2m\omega_1$ for some $m\in \mathbb{N}.$
		\end{itemize}
	\end{theorem}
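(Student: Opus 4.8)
The statement is the specialization of S.~Kumar's general descent criterion \cite[Theorem 3.10]{Kum} to the homogeneous space $G/P^{\alpha_1}$, so the plan is to make that specialization explicit and reduce it to a parity computation. The underlying principle is Kempf's descent lemma: $\mathcal{L}_{\lambda}$ descends to $T_G\backslash\backslash(G/P^{\alpha_1})^{ss}_{T_G}(\mathcal{L}_{\lambda})$ if and only if for every semistable point $x$ whose $T_G$-orbit is closed in the semistable locus, the isotropy subgroup $(T_G)_x$ acts trivially on the fibre $(\mathcal{L}_{\lambda})_x$. Accordingly, I would first fix a concrete geometric model in which the isotropy groups and their characters can be read off.

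First I would record the model coming from the vector (standard) representation. In both cases $\omega_1=\epsilon_1$, and $G/P^{\alpha_1}$ embeds $T_G$-equivariantly in $\mathbb{P}(V)$ with $V=\mathbb{C}^{2n}$ the standard representation, whose $T_G$-weights are $\pm\epsilon_1,\ldots,\pm\epsilon_n$; concretely $G/P^{\alpha_1}=\mathbb{P}(V)=\mathbb{P}^{2n-1}$ in the symplectic case and $G/P^{\alpha_1}$ is the smooth quadric $Q^{2n-2}\subset\mathbb{P}(V)$ in the orthogonal case, with $\mathcal{L}_{m\omega_1}$ the restriction of $\mathcal{O}_{\mathbb{P}(V)}(m)$. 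Writing a point as $[v]$ with $v=\sum_i(c_ie_i+d_if_i)$, the Hilbert--Mumford/weight-polytope criterion shows that $[v]$ is semistable precisely when $\mathbf{0}$ lies in the convex hull of the characters $\pm\epsilon_i$ indexing the nonzero coordinates of $v$. The next step is to check that every semistable point with closed $T_G$-orbit is $T_G$-conjugate to one supported on a "balanced" set of weights, and that the finite part of its isotropy group is generated by the image in $T_G$ of the central element $-\mathrm{Id}\in G$ (the element acting on $V$ by $-1$), possibly together with a subtorus on which $\epsilon_1$ restricts trivially.

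Granting this, the isotropy acts on the fibre of $\mathcal{O}(m)$ through the character $m\epsilon_1=m\omega_1$, and the central element $-\mathrm{Id}$ acts by $\epsilon_1(-\mathrm{Id})^m=(-1)^m$, since $\epsilon_1(-\mathrm{Id})=-1$; the kernel of $G\to SO(2n)$ (in the orthogonal case) acts trivially because $\omega_1=\epsilon_1$ is a weight of $SO(2n)$. Hence the isotropy acts trivially on every such fibre if and only if $(-1)^m=1$, i.e. if and only if $m$ is even, i.e. $\lambda=2m'\omega_1$; conversely, if $m$ is odd the central element acts by $-1$ on the fibre over a closed orbit, and Kempf's lemma forbids descent. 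This disposes of both directions simultaneously for the two types, the only difference being the ambient space ($\mathbb{P}^{2n-1}$ versus the quadric $Q^{2n-2}$) and hence the resulting quotient ($\mathbb{P}^{n-1}$ versus $\mathbb{P}^{n-2}$).

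The point I expect to require the most care is verifying that \emph{no} closed orbit carries a strictly larger finite isotropy forcing a stronger divisibility (such as $4\mid m$): one must survey all closed orbits in $X^{ss}$, not merely a generic stable point, and in particular the strictly semistable strata. The saving feature is that every $T_G$-weight of $V$ is $\pm$ a coordinate character and $\omega_1=\epsilon_1$, so each finite isotropy character evaluates to $\pm1$ on $\epsilon_1$ and parity is exactly the constraint. Making this uniform across all the (possibly strictly) semistable strata, and confirming that $X^{ss}\neq\emptyset$ with the asserted quotient, is the crux of the argument; once that is in place the equivalence $\lambda=2m\omega_1$ follows, matching \cite[Theorem 3.10]{Kum}.
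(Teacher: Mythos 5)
The paper does not actually prove \cref{thm2.2}: it is recalled verbatim from Kumar's descent paper \cite[Theorem 3.10]{Kum}, so there is no internal argument to compare against. Your proposal supplies a genuine, self-contained verification of this special case via Kempf's descent lemma, and its mathematical core is sound. The decisive observation, which you correctly isolate, is that since the $T_G$-weights of the standard representation $V$ are $\pm\epsilon_1,\dots,\pm\epsilon_n$ and these are linearly independent up to sign, a vector $v$ can be semistable only if its weight-support contains a full pair $\{\epsilon_i,-\epsilon_i\}$; consequently any $t\in T_G$ stabilizing $[v]$ scales $v$ by some $\mu$ with $\mu=\mu^{-1}$, so $\mu=\pm 1$ and the fibre of $\mathcal{L}(m\omega_1)$ is acted on by $\mu^{m}$. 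This rules out the ``stronger divisibility'' scenario you flag as the crux, uniformly over all semistable points (not only closed orbits, which makes the ``if'' direction even easier than Kempf's lemma demands), while $-\mathrm{Id}$ (or its lift to $Spin(2n,\mathbb{C})$, whose kernel acts trivially since $\mathcal{L}(\omega_1)$ is pulled back from $SO(2n,\mathbb{C})/P^{\alpha_1}$) lies in every stabilizer and acts by $(-1)^m$, giving the ``only if'' direction at once. Two small points deserve explicit mention if you write this up: in the orthogonal case a point supported on a single isotropic pair $\{e_i,f_i\}$ does not lie on the quadric, so the minimal closed orbits involve at least two pairs (this changes nothing in the character computation); and the asserted identification of the quotients with $\mathbb{P}^{n-1}$ and $\mathbb{P}^{n-2}$ is not part of this theorem but of \cref{prop1} and \cref{prop2}. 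What your route buys is independence from Kumar's general classification; what the citation buys the authors is uniformity across all maximal parabolics and all types.
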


	Now, we recall some definitions and facts on standard monomial for $Spin(2n, \mathbb{C})$ and $Sp(2n, \mathbb{C})$ from \cite[Appendix p. 363-365]{L}. We will present here simplified version of the definition of Young tableau and standard Young tableau for $Spin(2n,\mathbb{C})$ (respectively, $Sp(2n, \mathbb{C})$) associated to the weight $\lambda=2m\omega_1$ and $\lambda=2m\omega_{n}$ (respectively, $\lambda=2m\omega_1$), where $m\in \mathbb{N}$ (for more general see \cite[Appendix p. 363-365]{L}).

	\subsection{ Young tableau for $G=Spin(2n, \mathbb{C})$ and $\lambda=2m\omega_1,$ where $m \in \mathbb{N}$}\label{subsection2.2} Define $p_i = 4m$ for $i=1$ and $p_i=0$ for $2 \leq i \leq n.$ Associated to $\lambda,$ we define a partition $p(\lambda):=(p_{1},p_{2},\ldots,p_{n}).$ Then a Young diagram of shape $p(\lambda) = (p_1, \ldots, p_n)$ associated to $\lambda$ is denoted by $\Gamma$ consists of $p_1$ boxes in the first column, $p_2$ boxes  in the second column and so on. Since $p_i=0$ for all $2 \leq i \leq n,$ it follows that the Young diagram $\Gamma$ associated to $\lambda$ is of shape $p(\lambda) = (p_1),$ i.e., $\Gamma$ consists of $p_1$ boxes in a single column.
	
	A Young tableau of shape $\lambda$ is a Young diagram filled with integers between $1$ and $2n.$ 
	
	A Young tableau is said to be standard if the entries along the column is non-decreasing from top to bottom.
	
	A standard Young tableau $\Gamma$ of shape $p(\lambda)$ is said to be $Spin(2n,\mathbb{C})$-standard if the following conditions hold:
	
	\begin{itemize}
		\item[(1)] Let $r_i$ be the $i$-th row of $\Gamma$ enumerated from the bottom row. For $1 \leq i \leq \frac{p_1}{2}-1,$ let $r_{2i}$ be equal to $k_1$ and $r_{2i+1}$ be equal to $l_1.$ If $m \leq k_1 \leq m+1$ and $m \leq l_1 \leq m+1,$ then one has $k_1 \equiv l_1 \text{ mod } 2.$
		\item[(2)] The pairs $(r_{2i-1}, r_{2i})$ are trivial admissible pair i.e., $r_{2i-1}=r_{2i}$ for $i=1, 2, \ldots, \frac{p_1}{2}.$ 
	\end{itemize} 
	
	Given a $Spin(2n,\mathbb{C})$-standard Young tableau $\Gamma$ of shape $\lambda,$ we can associate a section $p_{\Gamma}\in H^0(G/P^{\alpha_{1}}, \mathcal{L}^{\otimes m}(2\omega_1))^{T_{G}}$
	which is called standard monomial of degree $m.$
	
	\subsection{Young tableau for $G=Spin(2n, \mathbb{C})$ and $\lambda=2m\omega_n,$ where $m \in \mathbb{N}$}\label{subsection2.1} Define $p_i = 2m$ for $1 \leq i \leq n.$ Associated to $\lambda,$ we define a partition $p(\lambda):=(p_{1},p_{2},\ldots,p_{n}).$ Then a Young diagram of shape $p(\lambda) = (p_1, \ldots, p_n)$ associated to $\lambda$ is denoted by $\Gamma$ consists of $p_1$ boxes in the first column, $p_2$ boxes  in the second column and so on.
	
	A Young tableau of shape $\lambda$ is a Young diagram filled with integers between $1$ and $2n.$ 
	
	A Young tableau is said to be standard if the entries along any column is non-decreasing from top to bottom and along any row is strictly increasing from left to right.
	
	A Young tableau $\Gamma$ of shape $p(\lambda)$ is said to be $Spin(2n,\mathbb{C})$-standard if $\Gamma$ is standard and if $r= (i_{1},i_{2},\ldots,i_{n})$ is a row of length $n$ such that if $i_{j}$ is an entry of the row, then $2n + 1-i_{j}$ is not an entry of this row. The number of integers greater than $n$ in a row of $\Gamma$ is even. 
	
	Given a $Spin(2n,\mathbb{C})$-standard Young tableau $\Gamma$ of shape $\lambda,$ we can associate a section $p_{\Gamma}\in H^0(G/P^{\alpha_{n}}, \mathcal{L}^{\otimes m}(2\omega_n))^{T_{G}}$
	which is called standard monomial of degree $m.$
	
	\subsection{Young tableau for $G=Sp(2n, \mathbb{C})$ and $\lambda=2m\omega_1,$ where $m \in \mathbb{N}$}\label{subsection2.3} Define $p_i = 4m$ for $i=1$ and $p_i=0$ for $2 \leq i \leq n.$ Associated to $\lambda,$ we define a partition $p(\lambda):=(p_{1},p_{2},\ldots,p_{n}).$ Then a Young diagram of shape $p(\lambda) = (p_1, \ldots, p_n)$ associated to $\lambda$ is denoted by $\Gamma$ consists of $p_1$ boxes in the first column, $p_2$ boxes  in the second column and so on. Since $p_i=0$ for all $2 \leq i \leq n,$ it follows that the Young diagram $\Gamma$ associated to $\lambda$ is of shape $p(\lambda) = (p_1),$ i.e., $\Gamma$ consists of $p_1$ boxes in a single column.
	
	A Young tableau of shape $\lambda$ is a Young diagram filled with integers between $1$ and $2n.$ 
	
	A Young tableau is said to be standard if the entries along the column is non-decreasing from top to bottom.
	
	A standard Young tableau $\Gamma$ of shape $p(\lambda)$ is said to be $Sp(2n,\mathbb{C})$-standard if the pairs $(r_{2i-1}, r_{2i})$ are trivial admissible pairs i.e., $r_{2i-1}=r_{2i}$ for $i=1, 2, \ldots, \frac{p_1}{2}.$ 
	
	Given a $Sp(2n,\mathbb{C})$-standard Young tableau $\Gamma$ of shape $\lambda,$ we can associate a section $p_{\Gamma}\in H^0(G/P^{\alpha_{1}}, \mathcal{L}^{\otimes m}(2\omega_1))^{T_{G}}$	 which is called standard monomial of degree $m.$
	
	\begin{example}
		For $G=Spin(8, \mathbb{C}),$ let $\lambda=2\omega_{4}.$ Then $(p_{1}, p_{2}, p_{3}, p_{4})=(2,2,2,2)$ and the corresponding Young diagram is the following.
	\end{example} 
	\ytableausetup{centertableaux}
	\begin{ytableau}
		\none &  &  & & \\
		\none  &  & & &
	\end{ytableau}	
	
	\begin{remark}\label{zeroweight}
		For $1\le t\le 2n,$	let $c_{\Gamma}(t)$ denote the number of times $t$ appears in $\Gamma.$ 
		Then the weight of $\Gamma$ is defined by $\frac{1}{2}\sum_{j=1}^n(c_{\Gamma}(j)-c_{\Gamma}(2n+1-j))\epsilon_j$. A monomial $p_{\Gamma}\in H^0(G/P^{\alpha_{n}}, \mathcal{L}(\lambda))$ is $T_{G}$-invariant if and only if the weight of $\Gamma$ is zero. Therefore, $p_{\Gamma}$ is $T_{G}$-invariant if and only if $c_{\Gamma}(t)=c_{\Gamma}(2n+1-t)$ for all $1 \leq t \leq 2n$.
	\end{remark}
	
	\subsection{Straightening Law}
	\begin{theorem}
		Suppose that $p_{\beta_{1}}p_{\beta_{2}}$ is a quadratic monomial which is not standard. Then we have
		$p_{\beta_{1}}p_{\beta_{2}}=\sum\limits_{(\alpha)\in I^{2}}^{}\lambda_{(\alpha)}p_{\alpha_{1}}p_{\alpha_{2}};$ $\alpha_{1}\le \alpha_{2}$ and $\lambda_{(\alpha)}\neq0$ in $\mathbb{C},$
		where on the right side $(\alpha)$ runs over distinct element of $I^{2}$ of the form
		\begin{itemize}
			\item $\alpha_{1}\le \beta_{1},$ $\alpha_{1}\neq \beta_{1};$ $\alpha_{1}\le \beta_{2},$ $\alpha_{1}\neq \beta_{2}.$
			
			\item $\alpha_{2}\ge \beta_{1},$ $\alpha_{2}\neq \beta_{1};$ $\alpha_{2}\ge \beta_{2},$ $\alpha_{2}\neq \beta_{2}.$
			
		\end{itemize}
		
	\end{theorem}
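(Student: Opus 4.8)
The plan is to prove this by Noetherian induction, taking the classical quadratic Plücker relations as the basic straightening move. Since $p_{\beta_1}p_{\beta_2}$ is not standard, the indices $\beta_1,\beta_2 \in I$ are incomparable in the partial order on $I_{n,2n}$ (neither $\beta_1\le\beta_2$ nor $\beta_2\le\beta_1$). Writing $\beta_1=(i_1<\cdots<i_n)$ and $\beta_2=(j_1<\cdots<j_n)$, I would first locate the smallest position $t$ at which the staircase condition fails, i.e.\ the first index where the entries of $\beta_1$ and $\beta_2$ cross. This crossing is precisely the data needed to select which Plücker relation to apply.

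The engine of the proof is the fundamental quadratic relation among the Plücker coordinates on $G_{n,2n}\cong H/Q^{\beta_n}$ (and, after restriction, on $G/P^{\alpha_n}$). Expanding along the crossing position $t$ yields an identity of the shape
\[
 p_{\beta_1}p_{\beta_2}=\sum_{S}\ \pm\, p_{\beta_1^{(S)}}\,p_{\beta_2^{(S)}},
\]
where each pair $(\beta_1^{(S)},\beta_2^{(S)})$ is obtained from $(\beta_1,\beta_2)$ by exchanging a nonempty set $S$ of the first $t$ entries between the two indices. The structural point I would isolate is that every genuine exchange pushes the smaller entries into the first index and the larger entries into the second, so that $\beta_1^{(S)}<\beta_1$, $\beta_1^{(S)}<\beta_2$, $\beta_2^{(S)}>\beta_1$ and $\beta_2^{(S)}>\beta_2$, with strict inequalities because $S\neq\varnothing$. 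This is exactly the bound asserted in the statement at the first level of straightening.

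Next I would run the induction. Order pairs by their second (larger) index, say $(\gamma_1,\gamma_2)\prec(\gamma_1',\gamma_2')$ when $\gamma_2<\gamma_2'$ in the partial order on $I_{n,2n}$; since each straightening step strictly increases the second index and $I_{n,2n}$ is a finite poset, this order is well-founded and the process terminates. Any term on the right that is already standard is retained; any remaining non-standard term $p_{\beta_1^{(S)}}p_{\beta_2^{(S)}}$ is straightened again. Because the bounds $\alpha_1<\beta_1,\beta_2$ and $\alpha_2>\beta_1,\beta_2$ are transitive, they are preserved under every subsequent move (each later step only lowers the first index further and raises the second), so every standard monomial surviving at the end satisfies the required inequalities. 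Finally, the nonvanishing of the coefficients $\lambda_{(\alpha)}$ is automatic once one invokes the linear independence of standard monomials as a basis of $H^0(G/P^{\alpha_n},\mathcal{L}(\lambda))$ in the relevant degree: the expansion is unique, and we simply list only those $(\alpha)$ whose coefficient is genuinely nonzero.

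The main obstacle is the third step, namely the rigorous verification that the index bounds are preserved under \emph{iterated} straightening. A single Plücker relation delivers only a local exchange at the crossing position, and one must argue that composing many such moves never reintroduces an index violating $\alpha_1<\beta_1,\beta_2$ or $\alpha_2>\beta_1,\beta_2$. This requires careful bookkeeping of how the extremal entries migrate through successive exchanges, and it is here that the combinatorics of the staircase, together with the transitivity of the partial order, does the essential work; the termination and nonvanishing assertions are comparatively routine once the bound is secured.
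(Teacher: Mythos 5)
The paper offers no proof of this statement at all: it is quoted verbatim from Seshadri's \emph{Geometry of $G/P$--I} (Corollary 1, p.~144), where it is established for a general minuscule $G/P$. Your proposal is therefore necessarily a different route, but it has two genuine gaps. The first is that your engine does not apply in the setting where this paper actually needs the theorem. The relevant degree-one coordinates on the spinor variety $G/P^{\alpha_n}\subset H/Q^{\beta_n}$ are the Pfaffians $q_{\underline{i}}$, which satisfy $q_{\underline{i}}^{2}=p'(\underline{i}(A),\underline{i}(A))$; that is, a single Pl\"ucker coordinate restricts to the \emph{square} of a spinor coordinate. Consequently a quadratic Pl\"ucker relation on $G_{n,2n}$ restricts to a relation of degree four in the $q_{\underline{i}}$'s, not to a quadratic straightening relation among them. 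The actual quadratic relations among the $q_{\underline{i}}$'s come from the Pfaffian identities of Dress--Wenzel (Theorem 2.6 of the paper), which is precisely what the authors invoke when they compute explicit straightening laws in the $Spin(8)$ example and in Lemmas 5.2--5.3. Your argument, as written, only addresses the type $A$ Grassmannian $H/Q^{\beta_n}$ and says nothing about the case $I=W^{P^{\alpha_n}}$ that the paper uses.

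The second gap is one you name yourself: the claim that every term produced by a shuffle satisfies the strict bounds $\alpha_1<\beta_1,\beta_2$ and $\alpha_2>\beta_1,\beta_2$, and that these bounds persist under iterated straightening, is deferred as ``the main obstacle.'' But that claim \emph{is} the theorem --- the mere existence of some expansion into standard monomials is the easy part --- so acknowledging the obstacle without resolving it leaves the proof incomplete. Note also that a raw Pl\"ucker shuffle at the first crossing does not by itself deliver the poset-theoretic bounds in the partial order on $I_{n,2n}$; classical treatments obtain a lexicographic decrease and must work harder (or argue geometrically, via the vanishing of $p_{\alpha}$ on Schubert and opposite Schubert varieties together with the linear independence of standard monomials there) to pin down the support of the expansion. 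Given that the statement is needed uniformly for the minuscule spinor case, the honest options are to reproduce Seshadri's argument or simply to cite it, as the paper does.
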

	
	\begin{proof}
		See \cite[Corollary 1, p.144]{Ses}.
	\end{proof}
	The above theorem implies that given  a non-standard quadratic monomial $p_{\beta_{1}}p_{\beta_{2}}$ can be expressed as in the above as  a linear combination of standard monomials.
	
	To proceed further we recall some result on Pfaffian of a skew-symmetric matrix from \cite{DW} which we will use to determine the straightening law.
	
	The Pfaffian Pf$(A)$ of a skew-symmetric $n\times n$-matrix	
	\begin{equation}\label{eq2.1}
		A=\begin{pmatrix}
			0 & a_{12}&a _{13}&\cdots & a_{1n-1}&a_{1n}\\
			-a_{12} & 0&a _{23}&\cdots & a_{2n-1}&a_{2n}\\
			-a_{13}& -a_{23}&0&\cdots & a_{3n-1}&a_{3n} \\
			\vdots & \vdots&\vdots&\cdots & \vdots&\vdots \\
			-a_{1n-1}& -a_{2n-1}&-a_{3n-1}&\cdots & 0&a_{n-1n}\\
			-a_{1n}& -a_{2n}&-a _{3n}&\cdots & -a_{n-1n}&0\\
		\end{pmatrix}
	\end{equation} 
	with coefficients $a_{ij}(1\le i<j\le n)$ in $\mathbb{C}$ is defined to be Pf$(A)^{2}$=det$(A).$ Note that in case $n\equiv 1$ mod 2, we have Pf$(A)=0$ for all $A\in SkM_{n}.$ In case $n\equiv 0$ mod 2, one can describe the Pfaffian equivalently as a polynomial 
	\begin{center}
		Pf$(X_{12},X_{13},\ldots, X_{1n}, X_{23},\ldots, X_{2n},\ldots, X_{n-1n})\in \mathbb{Z}[X_{12}, X_{13},\ldots, X_{n-1n}]$
	\end{center} such that for each $A\in SkM_{n}$ of the form \ref{eq2.1} one has 
	
	\begin{center}
		Pf$(a_{12},a_{13},\ldots, a_{1n}, a_{23},\ldots, a_{2n},\ldots, a_{n-1n})^{2}$=det($A$),
	\end{center}
	normalized such that 
	
	\begin{equation}
		\text{Pf}(a_{12},a_{13},\ldots, a_{1n}, a_{23},\ldots, a_{2n},\ldots, a_{n-1n}):=1
	\end{equation}
	for $a_{ij}:=\bigg\{ \begin{matrix}  1, \text{~for~} j-i=\frac{n}{2}\\ 0 ,\text{~otherwise~}
	\end{matrix}.$
	
	To be more explicit, consider as above a skew-symmetric $n\times n$-matrix $A$ of the form \ref{eq2.1} with $n$ is even or odd and for each subset $I\subseteq \{1,2, \ldots, n\}$ of cardinality, say, $m.$ Let $A(I)$ denote the skew-symmetric  $m\times m$-matrix  one gets from $A$ by eliminating all rows and columns not indexed by indices from $I.$ Moreover, put 
	\begin{equation}
		P(I):=\text{Pf}(A(I))
	\end{equation} 
	for every $I\subseteq \{1,2,\ldots,n\},$ where as usual Pf($A(\phi)$):=1. Then the following result holds.
	
	\begin{theorem}\label{thm2.6}
		For any two subsets $I_{1}, I_{2}\subseteq \{1,2,\ldots, n\}$ of odd cardinality and elements $i_{1},i_{2},\ldots,i_{t}\in \{1,2,\ldots,n \}$ with $i_{1}<i_{2}<\cdots < i_{t}$ and $\{i_{1},\ldots, i_{t}\}=I_{1}\Delta I_{2}:=(I_{1}\setminus I_{2})\cup (I_{2}\setminus I_{1})$ one has \begin{equation}
			\sum_{\tau=1}^{t} (-1)^\tau\cdot P({I_{1}\Delta\{i_{\tau}\}})\cdot P({I_{2}\Delta\{i_{\tau}\}})=0.
		\end{equation}
	\end{theorem}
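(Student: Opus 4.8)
The plan is to prove this as a generating-function identity in the exterior algebra $\bigwedge^{\bullet}V$, $V=\mathbb{C}^{n}$, packaging the whole family $\{P(I)\}$ of sub-Pfaffians into a single element so that the asserted quadratic relation drops out of a one-line operator identity. First I would fix the skew-symmetric matrix $A=(a_{ij})$ of \eqref{eq2.1} (extended by $a_{ji}=-a_{ij}$, $a_{ii}=0$) and form the bivector $\omega:=\sum_{1\le i<j\le n}a_{ij}\,e_{i}\wedge e_{j}\in\bigwedge^{2}V$. The preliminary I need is the classical fact that
\[
\frac{1}{k!}\,\omega^{\wedge k}=\sum_{|I|=2k}P(I)\,e_{I},\qquad e_{I}:=e_{i_{1}}\wedge\cdots\wedge e_{i_{2k}}\ \ (I=\{i_{1}<\cdots<i_{2k}\}),
\]
so that, summing over $k$, the ``total Pfaffian'' is $\Omega:=\exp(\omega)=\sum_{I\ \mathrm{even}}P(I)\,e_{I}\in\bigwedge^{\mathrm{even}}V$, with $P(\emptyset)=1$. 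I would justify this by expanding $\omega^{\wedge k}$ and matching the resulting signed sum over perfect matchings of $I$ against the combinatorial definition of $\mathrm{Pf}(A(I))$, the factor $1/k!$ absorbing the ordering of the $k$ wedge factors (this is where the normalization of $\mathrm{Pf}$ is pinned down).

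Next I would record the single structural property of $\Omega$ that does all the work. Writing $\iota_{\xi}$ for the interior product (the degree $-1$ antiderivation) and setting $v_{i}:=\iota_{e_{i}^{*}}\omega=\sum_{j}a_{ij}e_{j}$, the fact that $\omega$ has even degree yields $\iota_{e_{i}^{*}}(\omega^{\wedge k})=k\,v_{i}\wedge\omega^{\wedge(k-1)}$, and hence
\[
\iota_{e_{i}^{*}}\Omega=v_{i}\wedge\Omega\qquad(1\le i\le n).
\]

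With this in hand the relation is essentially immediate. Consider the two elements of $\bigwedge^{\mathrm{odd}}V\otimes\bigwedge^{\mathrm{odd}}V$
\[
F:=\sum_{i=1}^{n}(e_{i}\wedge\Omega)\otimes(\iota_{e_{i}^{*}}\Omega),\qquad G:=\sum_{i=1}^{n}(\iota_{e_{i}^{*}}\Omega)\otimes(e_{i}\wedge\Omega).
\]
Substituting $\iota_{e_{i}^{*}}\Omega=\sum_{j}a_{ij}\,e_{j}\wedge\Omega$ turns $F$ into $\sum_{i,j}a_{ij}(e_{i}\wedge\Omega)\otimes(e_{j}\wedge\Omega)$ and $G$ into $\sum_{i,j}a_{ij}(e_{j}\wedge\Omega)\otimes(e_{i}\wedge\Omega)$; relabelling $i\leftrightarrow j$ in the latter and using $a_{ji}=-a_{ij}$ shows $G=-F$, so $F+G=0$ identically. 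Finally I would extract the coefficient of $e_{I_{1}}\otimes e_{I_{2}}$ for the given odd subsets $I_{1},I_{2}$. In $F$ a nonzero contribution forces $i\in I_{1}$ (to build $e_{I_{1}}$ by wedging) and $i\notin I_{2}$ (to build $e_{I_{2}}$ by contracting $e_{I_{2}\cup i}$), giving $\pm P(I_{1}\setminus i)P(I_{2}\cup i)$ for $i\in I_{1}\setminus I_{2}$; in $G$ the roles reverse, giving $\pm P(I_{1}\cup i)P(I_{2}\setminus i)$ for $i\in I_{2}\setminus I_{1}$. Since $i\in I_{1}\Delta I_{2}$ means $i$ lies in exactly one of $I_{1},I_{2}$, both contributions equal $\pm P(I_{1}\Delta\{i\})P(I_{2}\Delta\{i\})$, and summing over $i\in I_{1}\Delta I_{2}$ reproduces the left-hand side of the theorem; as $F+G=0$, it vanishes.

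The one place that demands genuine care is the sign bookkeeping: I must verify that the Koszul signs produced by $e_{i}\wedge e_{I}$ and by $\iota_{e_{i}^{*}}e_{K}$ combine, when $i$ is the $\tau$-th smallest element of $I_{1}\Delta I_{2}$, into precisely the factor $(-1)^{\tau}$ of the statement (up to a single global sign, which is harmless since the whole expression is set equal to $0$). This is a routine but slightly fussy computation with the standard exterior-algebra conventions; everything else in the argument is formal. This recovers the Pfaffian Grassmann--Plücker relations of \cite{DW}.
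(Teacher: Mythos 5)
Your proposal is correct, but it is a genuinely different route from the paper, for the simple reason that the paper offers no argument at all for this statement: its ``proof'' is a citation of Theorem~1 of Dress--Wenzel \cite{DW}, who establish the identity by an elementary direct manipulation of Pfaffian expansions. What you give instead is the standard exterior-algebra derivation of the Pfaffian Grassmann--Pl\"ucker relations: package the sub-Pfaffians into $\Omega=\exp(\omega)$, observe $\iota_{e_i^*}\Omega=v_i\wedge\Omega$, and extract the coefficient of $e_{I_1}\otimes e_{I_2}$ from the antisymmetry identity $F+G=0$. Every step checks out, including the one you flagged as delicate: writing $p=\#\{j\in I_1: j\le i_\tau\}$ and $q=\#\{j\in I_2: j\le i_\tau\}$, the Koszul signs from $e_{i_\tau}\wedge e_{I_1\setminus i_\tau}$ (or $e_{I_1\cup i_\tau}$) and from contracting $e_{I_2\cup i_\tau}$ (or wedging onto $e_{I_2\setminus i_\tau}$) combine in both the $F$- and $G$-contributions to $(-1)^{p+q+1}$, and since $p+q\equiv\#\{j\in I_1\Delta I_2: j\le i_\tau\}=\tau\pmod 2$ (the elements of $I_1\cap I_2$ below $i_\tau$ are counted twice), each term carries exactly $(-1)^{\tau+1}$; so the coefficient of $e_{I_1}\otimes e_{I_2}$ in $F+G=0$ is $-1$ times the left-hand side of the theorem, as required. (I verified this against the paper's worked $Spin(8)$ example with $I_1=\{1,2,4\}$, $I_2=\{3\}$ and it reproduces $P(1234)=P(12)P(34)-P(13)P(24)+P(14)P(23)$.) What your approach buys is a self-contained, conceptually transparent proof in which the quadratic relations are literally the components of one operator identity; what the paper's citation buys is brevity and an argument requiring no multilinear-algebra overhead. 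One minor caveat: your argument proves the identity for the Pfaffian normalized by the exterior-algebra convention $\tfrac{1}{k!}\omega^{\wedge k}=\sum_{|I|=2k}P(I)e_I$ (which is the convention of \cite{DW}); this is consistent with the use made of Theorem~\ref{thm2.6} in the paper, and the quadratic identity is in any case insensitive to rescalings of the form $P(I)\mapsto c^{|I|/2}P(I)$ since $|I_1\Delta\{i_\tau\}|+|I_2\Delta\{i_\tau\}|$ is independent of $\tau$.
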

	\begin{proof}
		See \cite[Theorem 1, p.122]{DW}.
	\end{proof}
	
	\begin{remark}\label{rmk2.7}
		For $\underline{i}=(i_1, i_2, \ldots, i_n)\in I_{n,2n},$ we have $q_{\underline{i}}=P(\underline{i}(B))=\begin{ytableau}
			i_1 & i_2 & \cdots & i_n
		\end{ytableau}.$    
	\end{remark}
	
	\begin{remark}\label{rmk2.8}
		Let $I=\{i_1, i_2, \ldots, i_r\} \subset \{1, 2, \ldots, n\}$ be such that $i_1<i_2<\cdots <i_r.$ Let $\{j_{1},\ldots, j_{n-r}\}$ be the complements of $\{i_{1},\ldots, i_{r}\}$ in $\{1,2,\ldots, n\}$ such that $j_1<j_2<\ldots <j_{n-r}.$ Then we have $P(I)=q_{\underline{i}},$ where $\underline{i}=(j_1,j_2,\ldots, j_{n-r}, 2n+1-i_r, \ldots, 2n+1-i_1).$ 
	\end{remark}	
	
	Now, we work out straightening law for $Spin(8,\mathbb{C}).$ 
	
	\begin{example} For $G=Spin(8,\mathbb{C}),$ $T_{G}$-invariant tableau are $\ytableausetup{centertableaux}
		\begin{ytableau}
			1& 2 & 3 & 4 \\
			5 & 6 & 7& 8
		\end{ytableau},~$ $\ytableausetup{centertableaux}
		\begin{ytableau}
			1 & 2 & 5 & 6 \\
			3 & 4 & 7 & 8 
		\end{ytableau},~$ $\ytableausetup{centertableaux}
		\begin{ytableau}
			1 & 3 & 5 & 7 \\
			2 & 4 & 6 & 8
		\end{ytableau},~$  $\ytableausetup{centertableaux}
		\begin{ytableau}
			1& 4 & 6 & 7 \\
			2 & 3 & 5& 8
		\end{ytableau}.$ 
		Let $\underline{i_{1}}=(1,4,6,7)$ and $\underline{i_{2}}=(2, 3, 5,8).$ Then we have $\underline{i_{1}}(A)=\underline{i_{1}}(B)=(2,3)$ and $\underline{i_{2}}(A)=\underline{i_{2}}(A)=(1,4).$ Recall that the opposite big cell $U^-=SkM_{4},$ where 
		$SkM_{4}:=\Bigg\{Y=\begin{pmatrix}
			0&y_{12}&y_{13}&y_{14}\\
			-y_{12}&0&y_{23}&y_{24}\\
			-y_{13}&-y_{23}&0&y_{34}\\
			-y_{14}&-y_{24}&-y_{34}&0
		\end{pmatrix}: y_{ij} \in \mathbb{C} \text{~for all~} 1\le i<j\le 4\Bigg\}.$ 
		
		Let $I_{1}=\{1,2,4\}$ and $I_{2}=\{3\}.$ Then we have $I_{1}\Delta I_{2}=\{1,2,3,4\}.$ Thus by using Theorem \ref{thm2.6} we have 
		\begin{center}
			$P(I_{1}\Delta\{1\})\cdot P(I_{2}\Delta \{1\})-P(I_{1}\Delta\{2\})\cdot P(I_{2}\Delta \{2\})+P(I_{1}\Delta\{3\})\cdot P(I_{2}\Delta \{3\})-P(I_{1}\Delta\{4\})\cdot P(I_{2}\Delta \{4\})=0.$
		\end{center}
		
		Now note that by \cref{rmk2.8}, we have the following.
		\begin{itemize}
			\item $P(I_{1}\Delta\{1\})=q_{(1357)},$ $P(I_{2}\Delta\{1\})=q_{(2468)}.$
			\vspace{.2cm}
			
			\item	$P(I_{1}\Delta\{2\})=q_{(2358)},$ $P(I_{2}\Delta\{2\})=q_{(1467)}.$
			\vspace{.2cm}
			
			\item $P(I_{1}\Delta\{3\})=q_{(5678)},$ $P(I_{2}\Delta\{3\})=q_{(1234)}.$
			\vspace{.2cm}
			
			\item $P(I_{1}\Delta\{4\})=q_{(3478)},$ $P(I_{2}\Delta\{4\})=q_{(1256)}.$
		\end{itemize}

		Therefore, we have 
		\begin{center}
			$q_{(1,4,6,7)}q_{(2,3,5,8)}=q_{(1,2,3,4)}q_{(5,6,7,8)}-q_{(1,2,5,6)}q_{(3,4,7,8)}+q_{(1,3,5,7)}q_{(2,4,6,8)}.$
		\end{center}

		Hence, interchanging $q_{\underline{i}}$'s in Young tableau notation we have
		\begin{center}
			\begin{ytableau}
				1& 4 & 6 & 7 \\
				2 & 3 & 5& 8
			\end{ytableau}$~=\begin{ytableau}
				1& 2 & 3 & 4 \\
				5 & 6 & 7& 8
			\end{ytableau}$ $-~\ytableausetup{centertableaux}
			\begin{ytableau}
				1 & 2 & 5 & 6 \\
				3 & 4 & 7 & 8 
			\end{ytableau}$ $+~\ytableausetup{centertableaux}
			\begin{ytableau}
				1 & 3 & 5 & 7 \\
				2 & 4 & 6 & 8
			\end{ytableau}.$  
		\end{center}
		
	\end{example}
	
	\section{$G=Spin(8,\mathbb{C})$}\label{section4}
	
	Recall that by \cref{thm2.1}(i), $T_G$-linearized very ample line bundle $\mathcal{L}(2\omega_4)$ descends to a line bundle on the GIT quotient $T \backslash \backslash (G/P^{\alpha_4})^{ss}_{T_G}(\mathcal{L}(2\omega_4)).$ In this section, we prove that $T \backslash \backslash (G/P^{\alpha_4})^{ss}_{T_G}(\mathcal{L}(2\omega_4))$ is projectively normal and isomorphic to $(\mathbb{P}^2,\mathcal{O}_{\mathbb{P}^{2}}(1))$ as a polarized variety. 
	
	Consider $w=s_4(s_2s_3)(s_1s_2s_4).$ Note that $w$ is the longest element of $W^{P^{\alpha_{4}}},$ i.e., $X_G(w)=G/P^{\alpha_4}.$ Further, note that $ w(2\omega_4)<  0.$ Therefore,  by \cite[Lemma 3.1, p.276]{KNS}, $X_G(w)^s_{T_G}(\mathcal{L}(2\omega_4)) \neq \emptyset.$ In one line notation we have $w$ is equal to $(5,6,7,8).$ 
	
	Let $X= T_G \backslash \backslash (G/P^{\alpha_4})^{ss}_{T_G}(\mathcal{L}(2\omega_4)).$ Let $R=\bigoplus\limits_{k\in\mathbb{Z}_{\geq 0}}R_k,$ where $R_{k}=H^0(G/P^{\alpha_4}, \mathcal{L}^{\otimes k}(2\omega_{4}))^{T_G}.$ Then we have $X= Proj(R).$
	We note that $R_{k}$'s  are finite dimensional complex vector spaces.
	
	Let  $\Gamma_{1}=\ytableausetup{centertableaux}
	\begin{ytableau}
		1& 2 & 3 & 4 \\
		5 & 6 & 7& 8
	\end{ytableau},$ $\Gamma_{2} =\ytableausetup{centertableaux}
	\begin{ytableau}
		1 & 2 & 5 & 6 \\
		3 & 4 & 7 & 8 
	\end{ytableau},$ $\Gamma_{3}=\ytableausetup{centertableaux}
	\begin{ytableau}
		1 & 3 & 5 & 7 \\
		2 & 4 & 6 & 8
	\end{ytableau}$ and
	$\Gamma_{4}=\ytableausetup{centertableaux}
	\begin{ytableau}
		1 & 4 & 6 & 7 \\
		2 & 3 & 5 & 8
	\end{ytableau}.$ 
	
	Then $\{\Gamma_1, \Gamma_2, \Gamma_3\}$ forms a basis of $R_1.$
	\begin{lemma}\label{lemma3.1}
		The graded $\mathbb{C}$-algebra $R$ is generated by $R_1.$ 
	\end{lemma}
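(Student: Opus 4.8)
The plan is to transfer the whole question to the opposite big cell $O_G^{-}\cong SkM_4$ and settle it there by a weight computation. Since $G/P^{\alpha_4}$ is irreducible and $O_G^{-}$ is a dense open subset on which every power of $\mathcal{L}$ is trivial, restriction of sections gives an injective ring homomorphism $\rho\colon R\hookrightarrow \mathbb{C}[SkM_4]=\mathbb{C}[y_{12},y_{13},y_{14},y_{23},y_{24},y_{34}]$ (doubling degrees): a section vanishing on $O_G^{-}$ vanishes identically. Because the elements of $R_k$ are $T_G$-invariant and $O_G^{-}$ is $T_G$-stable, $\rho(R_k)$ consists of $T_G$-invariant, i.e. weight-zero, polynomials. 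Hence it suffices to identify the weight-zero part of $\mathbb{C}[SkM_4]$ and to show that it is the image of the subalgebra generated by $R_1$.

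First I would restrict the four $T_G$-invariant generators to $SkM_4$. Using \cref{rmk2.7} and \cref{rmk2.8} to rewrite each $q_{\underline i}$ as a sub-Pfaffian $P(\cdot)$ of the generic skew-symmetric matrix $Y$, one computes
\[
\rho(\Gamma_2)=y_{12}y_{34},\qquad \rho(\Gamma_3)=y_{13}y_{24},\qquad \rho(\Gamma_4)=y_{14}y_{23},
\]
together with $\rho(\Gamma_1)=\mathrm{Pf}(Y)=y_{12}y_{34}-y_{13}y_{24}+y_{14}y_{23}$, which is exactly the relation $\Gamma_1=\Gamma_2-\Gamma_3+\Gamma_4$ already produced by the straightening law. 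In particular $\mathbb{C}[\Gamma_1,\Gamma_2,\Gamma_3]=\mathbb{C}[\Gamma_2,\Gamma_3,\Gamma_4]$ inside $R$, and the three monomials $\rho(\Gamma_2),\rho(\Gamma_3),\rho(\Gamma_4)$ involve pairwise disjoint sets of variables, so they are algebraically independent.

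Next I would compute the $T_G$-weight of each coordinate $y_{ij}$. By \cref{zeroweight} the weight of $q_{\underline i}$ is read off from the single row $\underline i$; carrying this out for the six rows occurring above gives $\mathrm{wt}(y_{12})=-\mathrm{wt}(y_{34})$, $\mathrm{wt}(y_{13})=-\mathrm{wt}(y_{24})$, $\mathrm{wt}(y_{14})=-\mathrm{wt}(y_{23})$, with $\mathrm{wt}(y_{12}),\mathrm{wt}(y_{13}),\mathrm{wt}(y_{14})$ linearly independent. Consequently a monomial $\prod y_{ij}^{e_{ij}}$ is weight-zero if and only if $e_{12}=e_{34}$, $e_{13}=e_{24}$ and $e_{14}=e_{23}$, i.e. if and only if it equals $\Gamma_2^{p}\Gamma_3^{q}\Gamma_4^{r}$ for some $p,q,r\ge 0$. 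Thus the weight-zero part of $\mathbb{C}[SkM_4]$ is precisely the polynomial ring $\mathbb{C}[\Gamma_2,\Gamma_3,\Gamma_4]$, whose degree-$2k$ piece is spanned by the $\binom{k+2}{2}$ monomials with $p+q+r=k$.

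Finally I would combine the steps. Each factor $q_{\underline i}$ of a degree-$k$ standard monomial restricts to a homogeneous Pfaffian of degree equal to half the number of entries exceeding $4$ in its row, and $T_G$-invariance forces these totals to be $4k$ each over the $2k$ rows; hence $\rho(R_k)\subseteq \mathbb{C}[\Gamma_2,\Gamma_3,\Gamma_4]_k=\rho\big(\mathbb{C}[\Gamma_1,\Gamma_2,\Gamma_3]_k\big)$. Since $\mathbb{C}[\Gamma_1,\Gamma_2,\Gamma_3]_k\subseteq R_k$ and $\rho$ is injective, the two coincide, so $R$ is generated by $R_1$. The main obstacle is the third step: pinning down the exact weight-zero description, which needs the correct (spinor-type) weights of the $y_{ij}$ and the degree bookkeeping showing every $T_G$-invariant section of degree $k$ lands in degree $2k$. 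Once this is in place the conclusion is immediate, and it simultaneously yields $R\cong\mathbb{C}[\Gamma_1,\Gamma_2,\Gamma_3]$, which furnishes the $\mathbb{P}^2$ identification required for \cref{thm1.1}.
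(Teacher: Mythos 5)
Your route is genuinely different from the paper's. The paper stays entirely inside standard monomial theory: it writes a $T_G$-invariant standard tableau as a multiset of the eight admissible rows, turns the conditions $c_{\Gamma}(t)=c_{\Gamma}(9-t)$ into a linear system in the row multiplicities $a_i,b_j$, and solves it to get $a_1=b_4$, $a_2=b_2$, $a_3=b_3$, $a_4=b_1$, which exhibits one of $\Gamma_1,\dots,\Gamma_4$ as a degree-one subtableau. Your idea of restricting to $O_G^{-}\cong SkM_4$ and characterizing the image by a torus-weight condition is attractive and, if carried out correctly, buys more than the paper's argument (it gives $R\cong\mathbb{C}[\Gamma_1,\Gamma_2,\Gamma_3]$ directly, hence the $\mathbb{P}^2$ identification). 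Your identifications $\rho(\Gamma_2)=y_{12}y_{34}$, $\rho(\Gamma_3)=y_{13}y_{24}$, $\rho(\Gamma_4)=y_{14}y_{23}$, $\rho(\Gamma_1)=\mathrm{Pf}(Y)$, and the degree bookkeeping (an invariant tableau of degree $k$ has exactly $4k$ entries exceeding $4$, so its restriction is homogeneous of degree $2k$) are all correct.

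The gap is in the weight computation. The trivialization of $\mathcal{L}$ over $O_G^{-}$ is by the section $f'$, which is a $T_G$-weight vector of nonzero weight (a multiple of $\epsilon_1+\epsilon_2+\epsilon_3+\epsilon_4$); it is not $T_G$-equivariant. Hence a $T_G$-invariant section does not restrict to a weight-zero function: $\rho(R_k)$ lands in the semi-invariants of weight $k(\epsilon_1+\epsilon_2+\epsilon_3+\epsilon_4)$, not in the invariants. Concretely, $t$ acts on $Y\in SkM_4$ by $Y\mapsto D^{-1}YD^{-1}$ with $D=\mathrm{diag}(t_1,\dots,t_4)$, so $\mathrm{wt}(y_{12})$ and $\mathrm{wt}(y_{34})$ are (up to a common sign) $\epsilon_3+\epsilon_4$ and $\epsilon_1+\epsilon_2$ — they are not negatives of each other, and $y_{12}y_{34}$ is not an invariant function. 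So the assertions $\mathrm{wt}(y_{ij})=-\mathrm{wt}(y_{kl})$ for complementary pairs, and "the weight-zero part of $\mathbb{C}[SkM_4]$ is $\mathbb{C}[\Gamma_2,\Gamma_3,\Gamma_4]$," are false as stated. The argument is repairable: the correct condition on a degree-$2k$ monomial $\prod y_{ij}^{e_{ij}}$ in $\rho(R_k)$ is that each index $i\in\{1,2,3,4\}$ occurs exactly $k$ times, i.e. $\sum_{j\neq i}e_{ij}=k$ for each $i$; subtracting these four equations pairwise forces $e_{13}=e_{24}$, $e_{14}=e_{23}$, $e_{12}=e_{34}$, so the monomial is $\rho(\Gamma_2)^{p}\rho(\Gamma_3)^{q}\rho(\Gamma_4)^{r}$ with $p+q+r=k$ and your conclusion survives. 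You flagged this weight step as the main obstacle yourself, and indeed it is the one place where what you wrote does not hold.
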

	
	\begin{proof}
		
		Let $\Gamma$ be a standard Young tableau corresponding to a standard monomial in $R_k$. It is enough to prove that there is a subtableau $\Gamma^{'}$ of $\Gamma$ such that the associated monomial $p_{\Gamma^{'}}$ lies in $R_1$.
		
		The Young tableau $\Gamma$ has $2k$ rows and $4$ columns with strictly increasing rows from left to right and non-decreasing columns from top to bottom. Since $\Gamma$ is $T_G$-invariant, by \cref{zeroweight} we have, \begin{equation} c_{\Gamma}(t)=c_{\Gamma}(9-t) \text{ for all } 1 \leq t \leq 8,\end{equation} where $c_{\Gamma}(t)$ denotes the number of times $t$ appears in $\Gamma$.
		
		Note that all possible rows in $\Gamma$ are $(1,2,3,4),$ $(1,3,5,7),$ $(1,2,5,6),$ $(1,4,6,7),$ $(2,3,5,8),$ $(2,4,6,8),$ $(3,4,7,8),$  $(5,6,7,8).$ 
		
		Assume that $(1,2,3,4),$ $(1,3,5,7),$ $(1,2,5,6),$ $(1,4,6,7),$ $(2,3,5,8),$ $(2,4,6,8),$ $(3,4,7,8)$ and $(5,6,7,8)$ appear in $\Gamma,$ $a_1,$ $a_2,$ $a_3,$ $a_4,$ $b_1,$ $b_2,$ $b_3$ and $b_4$ number of times respectively. 
		
		Since $c_{\Gamma}(1)=c_{\Gamma}(8),$ we have 
		\begin{equation}\label{eq3.2}
			a_1+a_2+a_3+a_4=b_1+b_2+b_3+b_4.
		\end{equation}
		
		Since $c_{\Gamma}(2)=c_{\Gamma}(7),$ we have 
		\begin{equation}\label{eq3.3}
			a_1+a_3+b_1+b_2=a_2+a_4+b_3+b_4.
		\end{equation} 
		
		Since $c_{\Gamma}(3)=c_{\Gamma}(6),$ we have 
		\begin{equation}\label{eq3.4}
			a_1+a_2+b_1+b_3=a_3+a_4+b_2+b_4.
		\end{equation}
		
		Since $c_{\Gamma}(4)=c_{\Gamma}(5),$ we have 
		\begin{equation}\label{eq3.5}
			a_1+a_4+b_2+b_3=a_2+a_3+b_1+b_4.
		\end{equation}
		
		By \cref{eq3.2}$-$\cref{eq3.3} we have 
		\begin{equation}\label{eq3.6}
			a_2+a_4=b_1+b_2.
		\end{equation}
		
		By \cref{eq3.4}$-$\cref{eq3.5} we have 
		\begin{equation}\label{eq3.7}
			a_2-a_4=b_2-b_1.
		\end{equation}
		
		Solving \cref{eq3.6} and \cref{eq3.7} we have 
		\begin{equation}\label{eq3.8}
			a_2=b_2 \text{ and } a_4=b_1.
		\end{equation}
		
		From \cref{eq3.2} and \cref{eq3.8} we have
		\begin{equation}\label{eq3.9}
			a_1+a_3=b_3+b_4.
		\end{equation}
		
		From \cref{eq3.4} and \cref{eq3.8} we have
		\begin{equation}\label{eq3.10}
			a_1+b_3=a_3+b_4.
		\end{equation}
		
		Solving \cref{eq3.9} and \cref{eq3.10} we have
		\begin{equation}
			a_3=b_3 \text{ and } a_1=b_4.
		\end{equation}
		
		Hence, we have $a_2=b_2, a_4=b_1, a_3=b_3$ and $a_1=b_4$. Therefore, we get one of the tableaux in $\{\Gamma_1, \Gamma_2, \Gamma_3, \Gamma_4\}$ as a subtableauof $\Gamma$.
	\end{proof}
	
	\begin{corollary}
		The GIT quotient $X$ is projectively normal with respect to the descent of the $T_G$-linearized very ample line bundle $\mathcal{L}(2\omega_4).$
	\end{corollary}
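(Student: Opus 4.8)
The plan is to read off both conditions in the definition of projective normality recalled in Section~\ref{section2}, namely that the homogeneous coordinate ring of the polarized quotient is generated in degree one and that it is integrally closed. Write $\overline{\mathcal{L}}$ for the descent of $\mathcal{L}(2\omega_4)$ to $X$, which exists by \cref{thm2.1}(i). By the very construction of the GIT quotient one has $X = Proj(R)$ with $\overline{\mathcal{L}}$ playing the role of $\mathcal{O}_{Proj(R)}(1)$; equivalently $H^0(X,\overline{\mathcal{L}}^{\otimes k}) = R_k$ for every $k$, so that the homogeneous coordinate ring of $(X,\overline{\mathcal{L}})$ is exactly $R = \bigoplus_k R_k$. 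With this identification, generation of the coordinate ring in degree one is precisely the content of \cref{lemma3.1}.

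It remains to check integral closure, and here I would argue purely by invariant theory. Let $S = \bigoplus_{k \ge 0} H^0(G/P^{\alpha_4}, \mathcal{L}^{\otimes k}(2\omega_4))$ be the homogeneous coordinate ring of $G/P^{\alpha_4}$ in the projective embedding defined by the very ample bundle $\mathcal{L}(2\omega_4)$, so that, on taking $T_G$-invariants degree by degree, $R = S^{T_G}$. The partial flag variety $G/P^{\alpha_4}$ is projectively normal in this embedding (this is classical for $G/P$ and also follows from the standard monomial theory recalled above), whence $S$ is a normal graded domain. Since $T_G$ is a torus, hence linearly reductive, the ring of invariants $S^{T_G}$ of the normal domain $S$ is again normal. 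Therefore $R = S^{T_G}$ is integrally closed.

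Putting the two steps together, the graded ring $R$ is integrally closed and is generated as a $\mathbb{C}$-algebra by $R_1$, so $(X,\overline{\mathcal{L}})$ is projectively normal, which is the assertion of the corollary. I expect no serious obstacle: the entire combinatorial work is already done in \cref{lemma3.1}, and the only nonformal inputs are the two standard facts invoked above, namely projective normality of $G/P^{\alpha_4}$ (giving normality of $S$) and the stability of normality under passage to invariants by a reductive group. If one prefers to avoid citing normality of invariants, one can instead use the straightening relation exhibited in the preceding example, which writes $p_{\Gamma_4} = p_{\Gamma_1} - p_{\Gamma_2} + p_{\Gamma_3}$, to identify $R$ with the polynomial ring $\mathbb{C}[p_{\Gamma_1}, p_{\Gamma_2}, p_{\Gamma_3}]$ after a Hilbert-function count; from this both $X \cong \mathbb{P}^2$ and projective normality follow at once, though the former belongs properly to the proof of \cref{thm1.1} rather than to the corollary itself.
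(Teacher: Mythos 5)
Your argument is correct and essentially the same as the paper's: the paper also combines \cref{lemma3.1} with normality inherited from $G/P^{\alpha_4}$ (phrased there as normality of the semistable locus and hence of the quotient variety, rather than as normality of the invariant subring $S^{T_G}$, but this is the same fact in ring-theoretic versus geometric language). No gap.
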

	\begin{proof}
		Since $G/P^{\alpha_4}$ is normal, $(G/P^{\alpha_4})^{ss}_{T_G}(\mathcal{L}(2\omega_{4}))$ is also normal. Thus, ${T_G} \backslash \backslash (G/P^{\alpha_4})^{ss}_{T_G}(\mathcal{L}(2\omega_{4}))$ is a normal variety.  On the other hand, by \cref{lemma3.1}, the homogeneous coordinate ring of $X$ is generated by elements of degree one. Therefore, the GIT quotient ${T_G} \backslash \backslash (G/P^{\alpha_4})^{ss}_{T_G}(\mathcal{L}(2\omega_{4}))$ is projectively normal with respect to the  descent of the $T_G$-linearized very ample line bundle $\mathcal{L}(2\omega_4).$
	\end{proof}
	Let $w_1=s_3(s_1s_2s_4)$ and $w_2=(s_2s_3)(s_1s_2s_4).$ In one line notation $w_1$ is $(2,4,6,8)$ and $w_2$ is $(3,4,7,8).$
	\begin{corollary}\label{cor3.3}
		The GIT quotient $T_G \backslash \backslash (X_G(w_i))^{ss}_{T_G}(\mathcal{L}(2\omega_4))$ is projectively normal with respect to the  descent of the $T_G$-linearized very ample line bundle $\mathcal{L}(2\omega_4)$ for $i=1,2.$ 
	\end{corollary}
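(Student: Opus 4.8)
The plan is to deduce \cref{cor3.3} from \cref{lemma3.1} by exploiting the surjectivity of the restriction map on sections, together with the elementary fact that a graded quotient of an algebra generated in degree one is again generated in degree one. Throughout, write $R^{G/P}_k = H^0(G/P^{\alpha_4},\mathcal{L}^{\otimes k}(2\omega_4))^{T_G}$ and $R^{(i)}_k = H^0(X_G(w_i),\mathcal{L}^{\otimes k}(2\omega_4))^{T_G}$, and let $\rho^{(i)}\colon \bigoplus_k R^{G/P}_k \to \bigoplus_k R^{(i)}_k$ be the restriction, a homomorphism of graded $\mathbb{C}$-algebras.

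First I would record that the restriction map $H^0(G/P^{\alpha_4},\mathcal{L}^{\otimes k}(2\omega_4)) \to H^0(X_G(w_i),\mathcal{L}^{\otimes k}(2\omega_4))$ is surjective in every degree $k$. This is the basic compatibility of Standard Monomial Theory with Schubert varieties: the standard monomials $p_\Gamma$ all of whose rows are $\le w_i$ in the Bruhat--Chevalley order restrict to a basis of $H^0(X_G(w_i),\mathcal{L}^{\otimes k}(2\omega_4))$, while the remaining standard monomials restrict to $0$. This map is $T_G$-equivariant, hence surjective on each $T_G$-weight space; taking the zero weight space gives that $\rho^{(i)}_k\colon R^{G/P}_k \to R^{(i)}_k$ is surjective for every $k$. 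Before proceeding I would check that the quotient is nonempty: since both rows of $\Gamma_3$ are $\le w_1$ and $\le w_2$ (and both rows of $\Gamma_2$ are $\le w_2$), the corresponding $T_G$-invariant sections are nonzero on $X_G(w_i)$, so $(X_G(w_i))^{ss}_{T_G}(\mathcal{L}(2\omega_4)) \neq \emptyset$.

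Next I would invoke \cref{lemma3.1}, which gives $R^{G/P}_k = (R^{G/P}_1)^k$ for all $k$. Because $\rho^{(i)}$ is a surjective homomorphism of graded algebras, it follows formally that $R^{(i)}_k = \rho^{(i)}_k(R^{G/P}_k) = \rho^{(i)}_k\big((R^{G/P}_1)^k\big) = \big(\rho^{(i)}_1(R^{G/P}_1)\big)^k = (R^{(i)}_1)^k$, so that $R^{(i)} = \bigoplus_k R^{(i)}_k$ is generated as a $\mathbb{C}$-algebra by $R^{(i)}_1$. For integral closedness I would argue exactly as in the corollary preceding \cref{cor3.3}: the Schubert variety $X_G(w_i)$ is normal, so its semistable locus $(X_G(w_i))^{ss}_{T_G}(\mathcal{L}(2\omega_4))$, being open in a normal variety, is normal, and its good quotient $T_G\backslash\!\backslash (X_G(w_i))^{ss}_{T_G}(\mathcal{L}(2\omega_4))$ is normal as well. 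Normality together with generation in degree one yields projective normality with respect to the descent of $\mathcal{L}(2\omega_4)$.

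I expect the only step requiring genuine care to be the surjectivity of the restriction map (and its compatibility with standard monomials supported on $X_G(w_i)$); everything afterward is formal once \cref{lemma3.1} is available. As a self-contained sanity check one can instead argue directly in the style of the proof of \cref{lemma3.1}: restricting the weight-count analysis to rows $\le w_i$ forces $a_1=a_3=0$ for $w_1=(2,4,6,8)$ and $a_1=0$ for $w_2=(3,4,7,8)$, while the incomparability of the rows $(1,4,6,7)$ and $(2,3,5,8)$ forces $a_4=0$ in any standard (hence chain-shaped) tableau. Consequently every $T_G$-invariant standard tableau of degree $k$ supported on $X_G(w_i)$ manifestly contains a degree-one subtableau drawn from $\{\Gamma_3\}$ (for $w_1$) or from $\{\Gamma_2,\Gamma_3\}$ (for $w_2$), which re-proves generation in degree one by hand.
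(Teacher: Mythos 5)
Your argument is correct and follows essentially the same route as the paper: surjectivity of the restriction map $H^0(G/P^{\alpha_4},\mathcal{L}^{\otimes k}(2\omega_4))\to H^0(X_G(w_i),\mathcal{L}^{\otimes k}(2\omega_4))$ plus linear reductivity of $T_G$ gives surjectivity on invariants, and then \cref{lemma3.1} transfers generation in degree one to $X_G(w_i)$, with normality of the quotient handled as in the preceding corollary. Your extra remarks (the SMT justification of surjectivity, the nonemptiness check, and the direct tableau recount showing $a_1=a_3=a_4=0$ for $w_1$ and $a_1=a_4=0$ for $w_2$) are sound but not a different method.
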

	\begin{proof}
		Since the restriction map $\phi: H^0(G/P^{\alpha_4}, \mathcal{L}^{\otimes k}(2\omega_4)) \to H^0(X_G(w_i), \mathcal{L}^{\otimes k}(2\omega_4))$ is surjective and $T_G$ is linearly reductive, $\tilde{\phi}: H^0(G/P^{\alpha_4}, \mathcal{L}^{\otimes k}(2\omega_4))^{T_G} \to H^0(X_G(w_i), \mathcal{L}^{\otimes k}(2\omega_4))^{T_G}$ such that $f \mapsto f|_{X_G(w_i)}$ is surjective. So, by \cref{lemma3.1}, $H^0(X_G(w_i), \mathcal{L}^{\otimes k}(2\omega_4))^{T_G}$ is generated by $H^0(X_G(w_i), \mathcal{L}(2\omega_4))^{T_G}$. Therefore, $T_G \backslash \backslash (X_G(w_i))^{ss}_{T_G}(\mathcal{L}(2\omega_4))$ is projectively normal with respect to the  descent of the $T_G$-linearized very ample line bundle $\mathcal{L}(2\omega_4).$
	\end{proof}
	Note that since $(G/P^{\alpha_4})^s_{T_G}(\mathcal{L}(2\omega_4)) \neq \emptyset$, dim$(X)=l(w)-4=2.$   
	\begin{proposition}
		The GIT quotient $X$ is isomorphic to $(\mathbb{P}^2, \mathcal{O}_{\mathbb{P}^2}(1))$ as a polarized variety. 
		
	\end{proposition}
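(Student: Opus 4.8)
The plan is to exhibit $X=\mathrm{Proj}(R)$ explicitly by showing that $R$ is a polynomial ring in three variables. First I would record the two inputs already in hand: by \cref{lemma3.1} the graded algebra $R$ is generated by its degree-one piece $R_1$, and $\{\Gamma_1,\Gamma_2,\Gamma_3\}$ is a basis of $R_1$, so $\dim_{\mathbb{C}}R_1=3$. Choosing this basis yields a surjective homomorphism of graded $\mathbb{C}$-algebras $\phi\colon S:=\mathbb{C}[x_0,x_1,x_2]\twoheadrightarrow R$ sending $x_i\mapsto p_{\Gamma_{i+1}}$, and hence a closed immersion $\iota\colon X=\mathrm{Proj}(R)\hookrightarrow \mathbb{P}^2=\mathrm{Proj}(S)$ under which the descent of $\mathcal{L}(2\omega_4)$, namely $\mathcal{O}_X(1)$, is the pullback $\iota^{*}\mathcal{O}_{\mathbb{P}^2}(1)$.

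It then remains to show that $\iota$ is an isomorphism, which I would do by a dimension count. Since $X$ is a GIT quotient of the irreducible variety $(G/P^{\alpha_4})^{ss}_{T_G}(\mathcal{L}(2\omega_4))$, it is irreducible, so $R$ is an integral domain and $I:=\ker\phi$ is a prime ideal of $S$. By the projective normality established in the corollary following \cref{lemma3.1}, $R$ is the full homogeneous coordinate ring of $X$ with respect to $\mathcal{O}_X(1)$; hence its Krull dimension equals $\dim X+1=3$, using the computation $\dim X=2$ recorded just above. On the other hand $S=\mathbb{C}[x_0,x_1,x_2]$ is a three-dimensional integral domain and $R\cong S/I$ also has dimension $3$. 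Since any nonzero prime ideal of the polynomial ring $S$ has height at least one and would therefore force $\dim S/I\le 2$, we conclude $I=0$. Thus $\phi$ is an isomorphism, $R\cong \mathbb{C}[x_0,x_1,x_2]$, and $\iota\colon X\xrightarrow{\ \sim\ }\mathbb{P}^2$.

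Finally I would assemble the polarized statement: because $\iota$ is induced by the complete linear system $|\mathcal{O}_X(1)|$ and is now an isomorphism, it carries $\mathcal{O}_X(1)$ to $\mathcal{O}_{\mathbb{P}^2}(1)$. Hence $(X,\mathcal{O}_X(1))\cong(\mathbb{P}^2,\mathcal{O}_{\mathbb{P}^2}(1))$ as polarized varieties, where $\mathcal{O}_X(1)$ is the descent of the $T_G$-linearized very ample line bundle $\mathcal{L}(2\omega_4)$.

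The only genuinely delicate point is the bookkeeping that identifies $R$ with the homogeneous coordinate ring of $X$ with respect to the descended bundle, so that $\dim R=\dim X+1$ and the $\mathrm{Proj}$ picture agrees with the GIT quotient; this is exactly what the projective-normality corollary supplies (together with the standard fact that $T_G$-invariant sections on $G/P^{\alpha_4}$ restrict isomorphically to invariant sections on the semistable locus). Once these identifications are in place, the remaining argument is the elementary dimension comparison above, and I expect no further obstacle.
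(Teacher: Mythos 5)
Your proposal is correct and follows essentially the same route as the paper: the paper likewise combines \cref{lemma3.1} (so $R$ is generated by the three degree-one elements $\Gamma_1,\Gamma_2,\Gamma_3$) with the computation $\dim X=2$ to conclude that $R$ is the polynomial algebra on those generators, hence $X\cong(\mathbb{P}^2,\mathcal{O}_{\mathbb{P}^2}(1))$. You merely make explicit the dimension/primality argument that the paper leaves implicit in the phrase ``Thus, $R$ is the polynomial algebra generated by $\Gamma_1,\Gamma_2,\Gamma_3$.''
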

	\begin{proof}
		
		By \cref{lemma3.1}, $R$ is generated by $\Gamma_1, \Gamma_2, \Gamma_3.$ On the other hand, dimension of $X$ is $2.$ Thus, $R$ is the polynomial algebra generated by $\Gamma_1, \Gamma_2, \Gamma_3.$ Therefore, $X$ is isomorphic to $(\mathbb{P}^2, \mathcal{O}_{\mathbb{P}^2}(1))$ as a polarized variety. 	
	\end{proof}
	
	\begin{corollary}
		\begin{enumerate}
			\item[(i)] The GIT quotient $T_G \backslash \backslash (X_G(w_1))^{ss}_{T_G}(\mathcal{L}(2\omega_4))$ is point.
			
			\item[(ii)] The GIT quotient $T_G \backslash \backslash (X_G(w_2))^{ss}_{T_G}(\mathcal{L}(2\omega_4))$ is $(\mathbb{P}^1, \mathcal{O}_{\mathbb{P}^1}(1))$ as a polarized variety.
		\end{enumerate}
	\end{corollary}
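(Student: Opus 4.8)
The plan is to determine, for each $w_i,$ which of the three degree-one generators $\Gamma_1,\Gamma_2,\Gamma_3$ of $R$ survive restriction to $X_G(w_i),$ and then to read off the homogeneous coordinate ring of the quotient from the explicit description of the opposite cell. Recall that under the identification $O_H^-\simeq SkM_4$ the spinor coordinates are the sub-Pfaffians computed via \cref{rmk2.8}; a short computation gives $q_{(1234)}=1,$ $q_{(3478)}=y_{12},$ $q_{(2468)}=y_{13},$ $q_{(2358)}=y_{14},$ $q_{(1467)}=y_{23},$ $q_{(1357)}=y_{24},$ $q_{(1256)}=y_{34}$ and $q_{(5678)}=y_{12}y_{34}-y_{13}y_{24}+y_{14}y_{23}.$ Hence, as products of row Pfaffians,
\begin{equation*}
	\Gamma_1=q_{(1234)}q_{(5678)}=y_{12}y_{34}-y_{13}y_{24}+y_{14}y_{23},\qquad \Gamma_2=q_{(1256)}q_{(3478)}=y_{12}y_{34},\qquad \Gamma_3=q_{(1357)}q_{(2468)}=y_{13}y_{24}.
\end{equation*}
Since the coordinate $q_{\underline{j}}$ vanishes on $X_G(w_i)$ exactly when $\underline{j}\not\le w_i$ in $I_{4,8},$ the first step is simply to test the rows occurring in $\Gamma_1,\Gamma_2,\Gamma_3$ against $w_1=(2,4,6,8)$ and $w_2=(3,4,7,8).$

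For part (i) I would note that the rows $(5,6,7,8)$ and $(3,4,7,8)$ both fail to be $\le(2,4,6,8),$ so $q_{(5678)}$ and $q_{(3478)}$ vanish on $X_G(w_1);$ thus $\Gamma_1|_{X_G(w_1)}=\Gamma_2|_{X_G(w_1)}=0,$ while both rows of $\Gamma_3$ satisfy the bound. Therefore $H^0(X_G(w_1),\mathcal{L}(2\omega_4))^{T_G}$ is one-dimensional, spanned by $\Gamma_3.$ By \cref{cor3.3} the ring $R$ is generated in degree one, so $R$ is a quotient of $\mathbb{C}[\Gamma_3];$ and since $\Gamma_3|_{X_G(w_1)}=y_{13}y_{24}$ is a nonzero regular function on the cell, all its powers are nonzero, whence $R=\mathbb{C}[\Gamma_3]$ and $\mathrm{Proj}(R)$ is a point.

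For part (ii), among the rows of $\Gamma_1,\Gamma_2,\Gamma_3$ the only one failing $\le w_2=(3,4,7,8)$ is $(5,6,7,8);$ thus $q_{(5678)}=0$ on $X_G(w_2),$ giving $\Gamma_1|_{X_G(w_2)}=0$ while $\Gamma_2$ and $\Gamma_3$ survive, so $H^0(X_G(w_2),\mathcal{L}(2\omega_4))^{T_G}$ is two-dimensional with basis $\{\Gamma_2,\Gamma_3\}.$ Again by \cref{cor3.3} the ring $R$ is generated in degree one, so $R$ is a quotient of $\mathbb{C}[\Gamma_2,\Gamma_3].$ The crux is to exclude any further relation, and this is where the cell description does the work: on $X_G(w_2)$ the cell is the hypersurface $\{y_{12}y_{34}-y_{13}y_{24}+y_{14}y_{23}=0\}\subset SkM_4,$ and the map $Y\mapsto(\Gamma_2(Y),\Gamma_3(Y))=(y_{12}y_{34},\,y_{13}y_{24})$ is dominant onto $\mathbb{C}^2$ (given any $(a,b),$ the point with $y_{12}=a,\,y_{34}=1,\,y_{13}=b,\,y_{24}=1,\,y_{23}=1,\,y_{14}=b-a$ lies on the hypersurface and maps to $(a,b)$). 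Hence $\Gamma_2,\Gamma_3$ are algebraically independent, $R=\mathbb{C}[\Gamma_2,\Gamma_3],$ and $\mathrm{Proj}(R)\cong(\mathbb{P}^1,\mathcal{O}_{\mathbb{P}^1}(1))$ as a polarized variety.

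The only genuine obstacle is ruling out relations in part (ii), and I would resolve it by the explicit surjectivity on the opposite cell above rather than by an abstract dimension count. As a consistency check one may compute $\ell(w_1,W_G)=4$ and $\ell(w_2,W_G)=5,$ so that whenever stable points exist the formula $\dim=\ell(w_i)-\dim T_G$ predicts a point and a curve respectively, in agreement with the conclusions.
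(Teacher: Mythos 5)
Your argument is correct and reaches the same structural skeleton as the paper's proof (restriction surjectivity plus generation in degree one via \cref{cor3.3}, then showing the resulting graded ring is a polynomial ring), but it differs in the one step that actually requires justification, namely ruling out relations. The paper handles this entirely inside standard monomial theory: the surviving degree-one invariants are exactly the standard tableaux all of whose rows are $\leq w_i$, and since $\Gamma_2\Gamma_3$ (hence every $\Gamma_2^a\Gamma_3^b$) is again standard on $X_G(w_2)$, linear independence of standard monomials immediately gives freeness. You instead trivialize everything on the opposite cell, compute the eight sub-Pfaffians explicitly (your values $q_{(1234)}=1$, $q_{(3478)}=y_{12}$, $q_{(1256)}=y_{34}$, $q_{(2468)}=y_{13}$, $q_{(1357)}=y_{24}$, $q_{(5678)}=\mathrm{Pf}(Y)$, etc., are all correct), identify $X_G(w_2)\cap O^-_G$ with the Pfaffian hypersurface $\{y_{12}y_{34}-y_{13}y_{24}+y_{14}y_{23}=0\}$ (legitimate, since $(5,6,7,8)$ is the unique index $\not\leq w_2$), and prove algebraic independence of $\Gamma_2,\Gamma_3$ by exhibiting a point of the hypersurface over every $(a,b)\in\mathbb{C}^2$. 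This is more computational but also more self-contained: it does not invoke the linear-independence half of the standard monomial basis theorem, and it doubles as a sanity check on the identifications of Section 2 (for instance, it makes visible why $\Gamma_4$ restricts to $\Gamma_3$ on $X_G(w_1)$, consistent with the straightening relation). The cost is that you must know the set-theoretic description of $X_G(w_i)$ inside the opposite cell and the irreducibility of $X_G(w_1)\cap O^-_G$ to conclude $\Gamma_3^k\neq 0$; both are standard and your use of them is sound.
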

	\begin{proof}
		Proof of (i): Let $T_G \backslash \backslash (X_G(w_1))^{ss}_{T_G}(\mathcal{L}(2\omega_4)) =Proj(A),$ where $A=\bigoplus_{k\in\mathbb{Z}_{\geq 0}}A_k$ and $A_k=H^0(X_G(w_1), \mathcal{L}^{\otimes k}(2\omega_4))^{T_G}.$ By \cref{cor3.3}, $A$ is generated by $\Gamma_3.$ Thus, $A$ is the polynomial algebra generated by $\Gamma_3.$ Therefore, ${T_G} \backslash \backslash (X_G(w_1))^{ss}_{T_G}(\mathcal{L}(2\omega_4))$ is a point. 	
		
		Proof of (ii): Note that $T_G \backslash \backslash (X_G(w_2))^{ss}_{T_G}(\mathcal{L}(2\omega_4))=Proj(B),$ where $B=\bigoplus_{k\in\mathbb{Z}_{\geq 0}}B_k$ and $B_k=H^0(X_G(w_2), \mathcal{L}^{\otimes k}(2\omega_4))^{T_G}.$ By \cref{cor3.3}, $B$ is generated by $ \Gamma_2, \Gamma_3$. Note that $\Gamma_{2}\Gamma_{3}$ is a standard monomial. Thus, $B$ is the polynomial algebra generated by $ \Gamma_2, \Gamma_3.$ Therefore,  ${T_G} \backslash \backslash (X_G(w_2))^{ss}_{T_G}(\mathcal{L}(2\omega_4))$ is $(\mathbb{P}^1, \mathcal{O}_{\mathbb{P}^1}(1))$ as a polarized variety. 
	\end{proof}
	
	\section{$G=Spin(8n,\mathbb{C})$ $(n \geq 2)$} \label{section5}
	
	Let $X_G(w)$ be a Schubert variety in $G/P^{\alpha_{4n}}$ $(n \geq 2)$ admitting semi-stable point with respect to the $T_G$-linearized very ample line bundle $\mathcal{L}(2\omega_{4n}).$ Let $R=\bigoplus_{k \in \mathbb{Z}_{\geq 0}}R_k,$ where $R_k=H^0(X_G(w), \mathcal{L}^{\otimes k}(2\omega_{4n}))^{T_G}.$ Note that $R_k$'s are finite dimensional complex vector spaces. In general, it is very hard to determine whether $R$ is generated by $R_1$ or not as it involves various explicit calculations. In this section, for some Schubert varieties $X_G(w)$ in $G/P^{\alpha_{4n}}$ admitting semi-stable point with respect to the $T_G$-linearized very ample line bundle $\mathcal{L}(2\omega_{4n})$ we prove that the corresponding graded algebra $R$ is not generated by $R_1,$ but it is generated by $R_1$ and $R_2.$ 
	
	We recall that there exists a unique $w_1 \in W^{P^{\alpha_{4n}}}$ such that $w_1(2\omega_{4n})\leq 0$.  Note that $w_1=\prod_{i=2n}^{1}\tau_{2i-1},$ where $\tau_{2i-1}=s_{2i-1}s_{2i}\cdots s_{4n-2}s_{4n-1}$ for $i$ even and $\tau_{2i-1}=s_{2i-1}s_{2i}\cdots s_{4n-2}s_{4n}$ for  $i$ odd.
	
	Note that in one line notation $w_1$ is $(2,4,6,\ldots,4n-6,4n-4,4n-2,4n,4n+2,4n+4,4n+6,4n+8,\ldots,~8n).$ 
	Now we consider the following $w_i$'s such that $w_1 \leq w_i$ for all $2 \leq i \leq 6:$ 
	
	$w_2=s_{4n-4}w_1, w_3=s_{4n-2}w_{1}, w_{4}=s_{4n-2}s_{4n-4}w_{1}, w_{5}=s_{4n}s_{4n-2}w_{1}, w_{6}=s_{4n-4}s_{4n}s_{4n-2}w_{1}.$ 
	
	Since $w_1 \leq w_{i}$ and $w_{1}(2\omega_{4n})\leq 0,$ we have $w_{i}(2\omega_{4n}) \leq 0$ for all $2 \leq i \leq 6.$ Thus, by \cite[Lemma 2.1, p.470]{KP}, $X_G(w_i)^{ss}_{T_G}(\mathcal{L}(2\omega_{4n}))$ is non-empty for all $1 \leq i \leq 6.$ In one line notation $w_i$'s $(2 \leq i \leq 6)$ are the following:  
	\begin{itemize} 
		\item $w_2~~=~(2,4,6,\ldots,4n-6,4n-3,4n-2,4n,4n+2,4n+5,4n+6,4n+8,\ldots,~8n)$
		\item $w_3~~=~(2,4,6,\ldots,4n-6,4n-4,4n-1,4n,4n+3,4n+4,4n+6,4n+8,\ldots,~8n)$
		\item $w_4~~=~(2,4,6,\ldots,4n-6,4n-3,4n-1,4n,4n+3,4n+5,4n+6,4n+8,\ldots,~8n)$
		\item  $w_5=(2,4,6,\ldots,4n-6,4n-4,4n+1,4n+2,4n+3,4n+4,4n+6,4n+8,\ldots,8n)$
		\item  $w_{6}=(2,4,6,\ldots,4n-6,4n-3,4n+1,4n+2,4n+3,4n+5,4n+6,4n+8,\ldots,8n).$
	\end{itemize} 
	
	Then we have the following sub-diagram of the Bruhat lattice $W^{P^{\alpha_{4n}}}$.
	
	\begin{center}
		\begin{tikzpicture}[scale=.7]
			\node (e) at (0,0)  {$w_1$};
			\node (z) at (-2,-1.2) {$s_{4n-4}$};
			\node (y) at (2,-1.2) {$s_{4n-2}$};
			\node (g) at (-3,-3) {$w_2$};
			\node (b) at (3,-3) {$w_3$};
			\node (w) at (5.5,-4.8) {$s_{4n}$};
			\node (v) at (-2.5,-4.8) {$s_{4n-2}$};
			\node (u) at (2.5,-4.8) {$s_{4n-4}$};
			\node (d) at (0,-6) {$w_4$};
			\node (p) at (6,-6) {$w_5$};
			\node (s) at (5.5,-7.6) {$s_{4n-4}$};
			\node (q) at (.5,-7.6) {$s_{4n}$};
			\node (a) at (3,-9) {$w_6$};
			\node (z) at (3,-10) {Figure $1$};
			\draw (e) -- (g) -- (d) -- (a) -- (p) -- (b) -- (e);
			\draw (b) -- (d);
		\end{tikzpicture}
	\end{center}

	Let $X= T_G \backslash \backslash (X_G(w_{6}))^{ss}_{T_G}(\mathcal{L}(2\omega_{4n})).$ Let $R=\bigoplus_{k \in \mathbb{Z}_{\geq 0}} R_k,$ where
	$R_{k}=H^0(X_G(w_{6}), \mathcal{L}^{\otimes k}(2\omega_{4n}))^{T_G}.$ Then we have $X=Proj(R).$ 
	
	Let 
	
	\tiny{\ytableausetup{boxsize=3.5em} \[X_1= \ytableausetup{centertableaux}
		\begin{ytableau}
			1 & 3 & 5  & \cdots & 4n-7 & 4n-5 & 4n-3 & 4n-1 & 4n+1 & 4n+3 & 4n+5 & 4n+7 & 4n+9 & \cdots  & 8n-3 & 8n-1\\
			2 & 4 & 6 & \cdots & 4n-6 & 4n-4 & 4n-2 & 4n & 4n+2 & 4n+4 & 4n+6 & 4n+8 & 4n+10 & \cdots & 8n-2 & 8n 
		\end{ytableau} \]}
	
	\tiny{\ytableausetup{boxsize=3.5em} \[X_2= \ytableausetup{centertableaux}
		\begin{ytableau}
			1 & 3 & 5  & \cdots & 4n-7 & 4n-5 & 4n-4 & 4n-1 & 4n+1 & 4n+3 & 4n+4 & 4n+7 & 4n+9 & \cdots & 8n-3 & 8n-1\\
			2 & 4 & 6 & \cdots & 4n-6 & 4n-3 & 4n-2 & 4n & 4n+2 & 4n+5 & 4n+6 & 4n+8 & 4n+10 & \cdots & 8n-2 & 8n 
		\end{ytableau} \]}
	
	\tiny{\ytableausetup{boxsize=3.5em} \[X_3= \ytableausetup{centertableaux}
		\begin{ytableau}
			1 & 3 & 5  & \cdots & 4n-7 & 4n-5 & 4n-3 & 4n-2 & 4n+1 & 4n+2 & 4n+5 & 4n+7 & 4n+9 & \cdots  & 8n-3 & 8n-1\\
			2 & 4 & 6 & \cdots & 4n-6 & 4n-4 & 4n-1 & 4n & 4n+3 & 4n+4 & 4n+6 & 4n+8 & 4n+10 & \cdots & 8n-2 & 8n 
		\end{ytableau} \]}
	\tiny{\ytableausetup{boxsize=3.5em} \[X_4= \ytableausetup{centertableaux}
		\begin{ytableau}
			1 & 3 & 5  & \cdots & 4n-7 & 4n-5 & 4n-4 & 4n-2 & 4n+1 & 4n+2 & 4n+4 & 4n+7 & 4n+9 & \cdots  & 8n-3 & 8n-1\\
			2 & 4 & 6 & \cdots & 4n-6 & 4n-3 & 4n-1 & 4n & 4n+3 & 4n+5 & 4n+6 & 4n+8 & 4n+10 & \cdots & 8n-2 & 8n 
		\end{ytableau} \]}
	\tiny{\ytableausetup{boxsize=3.5em} \[X_5= \ytableausetup{centertableaux}
		\begin{ytableau}
			1 & 3 & 5  & \cdots & 4n-7 & 4n-5 & 4n-3 & 4n-2 & 4n-1 & 4n & 4n+5 & 4n+7 & 4n+9 & \cdots  & 8n-3 & 8n-1\\
			2 & 4 & 6 & \cdots & 4n-6 & 4n-4 & 4n+1 & 4n+2 & 4n+3 & 4n+4 & 4n+6 & 4n+8 & 4n+10 & \cdots & 8n-2 & 8n 
		\end{ytableau} \]}
	\tiny{\ytableausetup{boxsize=3.5em} \[X_6= \ytableausetup{centertableaux}
		\begin{ytableau}
			1 & 3 & 5  & \cdots & 4n-7 & 4n-5 & 4n-4 & 4n-2 & 4n-1 & 4n & 4n+4 & 4n+7 & 4n+9 & \cdots  & 8n-3 & 8n-1\\
			2 & 4 & 6 & \cdots & 4n-6 & 4n-3 & 4n+1 & 4n+2 & 4n+3 & 4n+5 & 4n+6 & 4n+8 & 4n+10 & \cdots & 8n-2 & 8n 
		\end{ytableau} \]}
	
	\tiny{\ytableausetup{boxsize=3.5em} \[Y_1= \ytableausetup{centertableaux}
		\begin{ytableau}
			1& 3 & 5  & \cdots & 4n-7 & 4n-5 & 4n-4 & 4n-3 & 4n-2 & 4n-1 & 4n+1 & 4n+7 & 4n+9 & \cdots & 8n-3 & 8n-1\\
			1& 3 & 5 & \cdots & 4n-7 & 4n-5 & 4n-2 & 4n & 4n+2 & 4n+4 & 4n+5 & 4n+7 & 4n+9 & \cdots & 8n-3 & 8n-1\\
			2 & 4 & 6 & \cdots & 4n-6 & 4n-4 & 4n-1 & 4n & 4n+3 & 4n+4 & 4n+6 & 4n+8 & 4n+10 & \cdots & 8n-2 & 8n\\
			2 & 4 & 6 & \cdots & 4n-6 & 4n-3 & 4n+1 & 4n+2 & 4n+3 & 4n+5 & 4n+6 & 4n+8 & 4n+10 & \cdots & 8n-2 & 8n\\
		\end{ytableau},\]}
	\tiny{\ytableausetup{boxsize=3.5em} \[Y_2= \ytableausetup{centertableaux}
		\begin{ytableau}
			1& 3 & 5  & \cdots & 4n-7 & 4n-5 & 4n-4 & 4n-3 & 4n-2 & 4n & 4n+2 & 4n+7 & 4n+9 & \cdots & 8n-3 & 8n-1\\
			1& 3 & 5 & \cdots & 4n-7 & 4n-5 & 4n-2 & 4n-1 & 4n+1 & 4n+4 & 4n+5 & 4n+7 & 4n+9 & \cdots & 8n-3 & 8n-1\\
			2 & 4 & 6 & \cdots & 4n-6 & 4n-4 & 4n-1 & 4n & 4n+3 & 4n+4 & 4n+6 & 4n+8 & 4n+10 & \cdots & 8n-2 & 8n\\
			2 & 4 & 6 & \cdots & 4n-6 & 4n-3 & 4n+1 & 4n+2 & 4n+3 & 4n+5 & 4n+6 & 4n+8 & 4n+10 & \cdots & 8n-2 & 8n\\
		\end{ytableau},\]}
	\tiny{\ytableausetup{boxsize=3.5em} \[Y_3= \ytableausetup{centertableaux}
		\begin{ytableau}
			1& 3 & 5  & \cdots & 4n-7 & 4n-5 & 4n-4 & 4n-3 & 4n-1 & 4n & 4n+3 & 4n+7 & 4n+9 & \cdots & 8n-3 & 8n-1\\
			1& 3 & 5 & \cdots & 4n-7 & 4n-5 & 4n-2 & 4n-1 & 4n+1 & 4n+4 & 4n+5 & 4n+7 & 4n+9 & \cdots & 8n-3 & 8n-1\\
			2 & 4 & 6 & \cdots & 4n-6 & 4n-4 & 4n-2 & 4n & 4n+2 & 4n+4 & 4n+6 & 4n+8 & 4n+10 & \cdots & 8n-2 & 8n\\
			2 & 4 & 6 & \cdots & 4n-6 & 4n-3 & 4n+1 & 4n+2 & 4n+3 & 4n+5 & 4n+6 & 4n+8 & 4n+10 & \cdots & 8n-2 & 8n\\
		\end{ytableau},\]}
	\tiny{\ytableausetup{boxsize=3.5em} \[Y_4= \ytableausetup{centertableaux}
		\begin{ytableau}
			1& 3 & 5  & \cdots & 4n-7 & 4n-5 & 4n-4 & 4n-3 & 4n+1 & 4n+2 & 4n+3 & 4n+7 & 4n+9 & \cdots & 8n-3 & 8n-1\\
			1& 3 & 5 & \cdots & 4n-7 & 4n-5 & 4n-2 & 4n-1 & 4n+1 & 4n+4 & 4n+5 & 4n+7 & 4n+9 & \cdots & 8n-3 & 8n-1\\
			2 & 4 & 6 & \cdots & 4n-6 & 4n-4 & 4n-2 & 4n & 4n+2 & 4n+4 & 4n+6 & 4n+8 & 4n+10 & \cdots & 8n-2 & 8n\\
			2 & 4 & 6 & \cdots & 4n-6 & 4n-3 & 4n-1 & 4n & 4n+3 & 4n+5 & 4n+6 & 4n+8 & 4n+10 & \cdots & 8n-2 & 8n\\
		\end{ytableau},\]}
	
	\tiny{\ytableausetup{boxsize=3.5em} \[Z_1= \ytableausetup{centertableaux}
		\begin{ytableau}
			1& 3 & 5  & \cdots & 4n-7 & 4n-5 & 4n-4 & 4n-3 & 4n-2 & 4n-1 & 4n+1 & 4n+7 & 4n+9 & \cdots & 8n-3 & 8n-1\\
			1& 3 & 5 & \cdots & 4n-7 & 4n-5 & 4n-4 & 4n-1 & 4n+1 & 4n+3 & 4n+4 & 4n+7 & 4n+9 & \cdots & 8n-3 & 8n-1\\
			1& 3 & 5 & \cdots & 4n-7 & 4n-5 & 4n-2 & 4n & 4n+2 & 4n+4 & 4n+5 & 4n+7 & 4n+9 & \cdots & 8n-3 & 8n-1\\
			2 & 4 & 6 & \cdots & 4n-6 & 4n-4 & 4n-2 & 4n & 4n+2 & 4n+4 & 4n+6 & 4n+8 & 4n+10 & \cdots & 8n-2 & 8n\\
			2 & 4 & 6 & \cdots & 4n-6 & 4n-3 & 4n-1 & 4n & 4n+3 & 4n+5 & 4n+6 & 4n+8 & 4n+10 & \cdots & 8n-2 & 8n\\
			2 & 4 & 6 & \cdots & 4n-6 & 4n-3 & 4n+1 & 4n+2 & 4n+3 & 4n+5 & 4n+6 & 4n+8 & 4n+10 & \cdots & 8n-2 & 8n\\
		\end{ytableau},\]}
	\tiny{\ytableausetup{boxsize=3.5em} \[Z_2= \ytableausetup{centertableaux}
		\begin{ytableau}
			1& 3 & 5  & \cdots & 4n-7 & 4n-5 & 4n-4 & 4n-3 & 4n-2 & 4n & 4n+2 & 4n+7 & 4n+9 & \cdots & 8n-3 & 8n-1\\
			1 & 3 & 5 & \cdots & 4n-7 & 4n-5 & 4n-4 & 4n-1 & 4n+1 & 4n+3 & 4n+4 & 4n+7 & 4n+9 & \cdots & 8n-3 & 8n-1\\
			1 & 3 & 5 & \cdots & 4n-7 & 4n-5 & 4n-2 & 4n-1 & 4n+1 & 4n+4 & 4n+5 & 4n+7 & 4n+9 & \cdots & 8n-3 & 8n-1\\
			2 & 4 & 6 & \cdots & 4n-6 & 4n-4 & 4n-2 & 4n & 4n+2 & 4n+4 & 4n+6 & 4n+8 & 4n+10 & \cdots & 8n-2 & 8n\\
			2 & 4 & 6 & \cdots & 4n-6 & 4n-3 & 4n-1 & 4n & 4n+3 & 4n+5 & 4n+6 & 4n+8 & 4n+10 & \cdots & 8n-2 & 8n\\
			2 & 4 & 6 & \cdots & 4n-6 & 4n-3 & 4n+1 & 4n+2 & 4n+3 & 4n+5 & 4n+6 & 4n+8 & 4n+10 & \cdots & 8n-2 & 8n\\
		\end{ytableau},\]}
	
	\normalsize{
		
		Note that $X_i$'s form a basis of $R_1.$ Further, note that $Y_i$'s (respectively, $Z_i$'s) are standard monomials of $R_2$ (respectively, $R_3$) which are not in $R_1 \otimes R_1$ (respectively, not in $R_1 \otimes R_2$). 
		\begin{lemma}\label{lem5.1}
			The graded algebra $R$ is generated by $X_i$'s, $Y_j$'s and $Z_l$'s  as a $\mathbb{C}$-algebra.
	\end{lemma}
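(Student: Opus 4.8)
The plan is to show, by induction on the degree $k$, that every $T_G$-invariant standard monomial $p_\Gamma\in R_k$ lies in the $\mathbb{C}$-subalgebra generated by the $X_i$, $Y_j$ and $Z_l$. As in the proof of \cref{lemma3.1}, a basis of $R_k$ is given by the standard monomials $p_\Gamma$, where $\Gamma$ is a $Spin(8n,\mathbb{C})$-standard Young tableau with $2k$ rows and $4n$ columns each of whose two-row blocks indexes an element $\tau\le w_6$ of $W^{P^{\alpha_{4n}}}$, and where $T_G$-invariance is the weight-zero condition $c_\Gamma(t)=c_\Gamma(8n+1-t)$ for $1\le t\le 8n$ of \cref{zeroweight}. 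The first thing I would record is that the factorization is completely flexible: since $p_\Gamma$ is literally the product $\prod_i p_{\tau_i}$ along the chain $\tau_1\ge\cdots\ge\tau_k$ of its two-row blocks, any sub-multiset $S$ of these blocks is again a chain, so $p_{\Gamma_S}$ is a standard monomial and $p_\Gamma=p_{\Gamma_S}\,p_{\Gamma_{S^c}}$ with no straightening needed. Hence it suffices to split off a sub-chain that is itself $T_G$-invariant and equal to one of the listed generators, and then apply the induction hypothesis to the (balanced) complement.

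The second step is a localization observation that makes the combinatorics finite and uniform in $n$. Because $w_6$ differs from $w_1=\prod_{i=2n}^{1}\tau_{2i-1}$ only in a bounded block of coordinates near position $4n$, every row occurring in such a $\Gamma$ agrees with the generic pattern $1,3,5,\dots$ (respectively $2,4,6,\dots$) outside a fixed window of columns around the middle, and inside the window takes one of finitely many shapes. Thus $X_i$, $Y_j$, $Z_l$, and any admissible $\Gamma$ over $X_G(w_6)$, are all determined by their restriction to this window, and the entire analysis collapses to the finite list of admissible rows and two-row blocks supported there.

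The third step is the counting argument. Introducing the multiplicities with which the finitely many admissible rows occur in $\Gamma$, the balance conditions $c_\Gamma(t)=c_\Gamma(8n+1-t)$ become a linear system in these multiplicities, exactly parallel to the balance equations \eqref{eq3.2}--\eqref{eq3.5} in the proof of \cref{lemma3.1}. Solving this system pins down the pairing data up to finitely many indecomposable configurations, and I would then verify case by case that $R_1$ is spanned by the $X_i$; that a balanced degree-$2$ monomial whose two blocks are both unbalanced is, after applying the straightening law and the Pfaffian relations of \cref{thm2.6}, one of the $Y_j$; and that a balanced degree-$3$ monomial admitting no balanced sub-chain is one of the $Z_l$. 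For $k\ge 4$ the same bookkeeping should show that any solution of the balance system contains a balanced sub-chain of degree $1$, $2$ or $3$ equal to some $X_i$, $Y_j$ or $Z_l$ whose complement is again balanced; splitting it off via the first step and invoking induction then closes the argument.

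The hard part will be the \emph{termination at degree $3$}: one must rule out indecomposable balanced configurations of degree $\ge 4$. This is precisely where the standardness (chain) constraint interferes with the naive strategy of pairing each row with its complement — the very phenomenon that forces the extra generators $Y_j$ and $Z_l$ into the list in the first place. The delicate point is to convert any candidate degree-$\ge 4$ obstruction into a strictly shorter balanced block, using the Pfaffian identities of \cref{thm2.6} to rewrite entangled pairs together with the window count to bound the number of cases, thereby showing that the list of indecomposable balanced standard monomials closes off after degree $3$.
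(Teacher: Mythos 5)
Your skeleton is the same as the paper's: induct on $k$, observe that any subset of the rows of a standard tableau is again a standard tableau so that extracting a balanced (weight-zero) sub-chain gives an honest factorization $p_\Gamma=p_{\Gamma_S}p_{\Gamma_{S^c}}$ with balanced complement, and then show that every balanced standard tableau over $X_G(w_6)$ contains one of the listed generators as a sub-chain. The difficulty is that you stop exactly where the proof begins. The entire content of \cref{lem5.1} is the verification of your third and fifth steps, and you explicitly defer it (``The hard part will be the termination at degree $3$\dots''). The paper carries this out by an exhaustive case analysis: from $c_\Gamma(t)=k$ for all $t$ and $r_{2k}\le w_6$ it pins down the columns $1,\dots,2n-3$ completely, reduces the bottom row $r_{2k}$ to six possibilities, and in each case chases the constraints on $r_1$, $r_k$, $r_{k+1}$, $r_{\frac{k}{2}+1}$, $r_{\frac{3k}{2}}$ (eliminating branches by counting occurrences of $4n-4,\dots,4n+2$ in the middle columns) until an explicit $X_i$, $Y_j$ or $Z_l$ appears as a sub-chain. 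Your claim that the balance system is ``exactly parallel'' to \eqref{eq3.2}--\eqref{eq3.5} in \cref{lemma3.1} is misleading: in the rank-one case the linear system alone forces each row to pair with its complement, yielding degree-one factors, whereas here the standardness (chain) constraint genuinely obstructs that pairing --- which is why $Y_j$ and $Z_l$ are needed at all --- and no purely linear-algebraic bookkeeping of row multiplicities suffices; the paper's argument is positional (which row sits where in the chain), not just a multiplicity count.

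A secondary concern: your proposed termination step invokes the straightening law and the Pfaffian identities of \cref{thm2.6} to ``rewrite entangled pairs.'' The paper's proof of \cref{lem5.1} uses no straightening whatsoever --- it stays entirely inside the standard monomial basis and exhibits generators as literal sub-tableaux; straightening enters only later (\cref{lem5.2}, \cref{lem5.3}) to shrink the generating set to $X_1,\dots,X_6,Y_1$. If you straighten inside the induction you leave the standard basis, replace one monomial by a linear combination, and must then control every term of that combination, which threatens the well-foundedness of your induction. So the route you sketch is not just incomplete but, as described, structurally riskier than the paper's. To make the proposal into a proof you would need to actually perform the case analysis (or find a genuinely different closure argument bounding indecomposable balanced chains by degree $3$), and that is precisely the missing mathematics.
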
}
	
	\begin{proof}
		
		Let $f \in R_k$ be a standard monomial. We claim that $f = f_1f_2,$ where either $f_1$ or $f_2$ is in $R_1$ or $R_2$ or $R_3$.
		
		The Young diagram associated to $f$ has shape $p = (p_1,  p_2, \ldots, p_{4n}) = (2k, 2k, \ldots, 2k)$. So, the Young tableau $\Gamma$ associated to $f$ has $2k$ number of rows and $4n$ number of columns such that rows are filled up with strictly increasing integers from left to right and columns are filled up with non-decreasing integers from top to bottom. Since $f$ is $T_G$-invariant, by \cref{zeroweight} we have \begin{equation}\label{eq11.1} c_\Gamma(t)=c_\Gamma(8n+1-t) 
		\end{equation}
		for all $1 \leq t \leq 8n,$ where $c_{\Gamma}(t)$ denotes the number of times $t$ appears in $\Gamma.$ 
		
		Let $r_i$ be the $i$-th row of the tableau. Let $E_{i,j}$  be the $(i,j)$-th entry of $\Gamma.$ Let $N_{t,j}$ denote the number of times $t$ appears in the $j$-th column.  
		Note that each row of $\Gamma$ contains either $i$ or $2n + 1 - i$ for all $1 \leq i \leq 2n.$
		Further, by using \cref{eq11.1}, we have \begin{equation}\label{eq5.2}
			c_{\Gamma}(t)=k
		\end{equation} for all $1 \leq t \leq 8n$.
		
		Recall that in one line notation $w_6$ is $(2,4,6,\ldots,4n-6,4n-3,4n+1,4n+2,4n+3,4n+5,4n+6,4n+8,\ldots,8n).$ Since $r_{2k} \leq w_6,$ we have $E_{2k,j} \leq 2j$ for all $1 \leq j \leq 2n-3.$ Note that for $1 \leq j \leq 2n-3,$ we have $E_{i,j}=2j$ for all $k+1 \leq i \leq 2k.$ Thus, for $1 \leq j \leq 2n-2,$ we have $E_{i,j}=2j-1$ for all $1 \leq i \leq k.$ 
		Since $E_{k,2n-2}=4n-5$ and $E_{2k,2n-2} \leq 4n-3,$ we have $E_{2k,2n-2}$ is either $4n-4$ or $4n-3$.
		
		Note that since each row has even number of integer entries greater than $4n$ and each row of $\Gamma$ has $4n$ number of boxes, it follows that each row has even number of integers less than or equal to $4n.$ $(*)$
		
		Hence, by $(*),$  $E_{2k,2n-1} \neq 4n.$ Further, since $4n-2 \leq E_{2k,2n-1} \leq 4n+1,$ it follows that $E_{2k,2n-1}\in \{4n-2, 4n-1, 4n+1\}.$  Also, by $(*),$ $E_{2k,2n} \neq 4n+1.$ Further, since $4n \leq E_{2k,2n} \leq 4n+2,$ $E_{2k,2n}$ is either $4n$ or $4n+2.$ Thus, the following are the possibilities for $r_{2k}:$ 
		\begin{itemize}
			\item $(2,4,6,\ldots,4n-6,4n-4,4n-2,~~4n~~,4n+2,4n+4,4n+6,4n+8,\ldots,8n)$ 
			\item $(2,4,6,\ldots,4n-6,4n-3,4n-2,~~4n~~,4n+2,4n+5,4n+6,4n+8,\ldots,8n)$
			\item $(2,4,6,\ldots,4n-6,4n-4,4n-1,~~4n~~,4n+3,4n+4,4n+6,4n+8,\ldots,8n)$
			\item $(2,4,6,\ldots,4n-6,4n-3,4n-1,~~4n~~,4n+3,4n+5,4n+6,4n+8,\ldots,8n)$ 
			\item $(2,4,6,\ldots,4n-6,4n-4,4n+1,4n+2,4n+3,4n+4,4n+6,4n+8,\ldots,8n)$
			\item $(2,4,6,\ldots,4n-6,4n-3,4n+1,4n+2,4n+3,4n+5,4n+6,4n+8,\ldots,8n).$
		\end{itemize} 
		
		Now, observe that in each of the following cases for $1 \leq j \leq 2n-3$ (respectively, $1 \leq j \leq 2n-2$), we have $E_{i,j}=2j$ (respectively, $E_{i,j}=2j-1$) for all $k+1 \leq i \leq 2k$ (respectively, $1 \leq i \leq k$).   
		
		\textbf{Case I:} Assume that $r_{2k}=(2,4,6,\ldots,4n-6,4n-4,4n-2,4n,4n+2,4n+4,4n+6,4n+8,\ldots,8n).$ Then for $1 \leq j \leq 2n,$ we have $E_{i,j}=2j-1$ (respectively, $E_{i,j}=2j$) for all $1 \leq i \leq k$ (respectively, for all $k+1 \leq i \leq 2k$). Therefore, $r_1=(1,3,5,\ldots,4n-7,4n-5,4n-3,4n-1,4n+1,4n+3,4n+5,4n+7,\ldots,8n-1)$ and hence, $X_{1}$ is a factor of $f.$
		
		\textbf{Case II:} Assume that $r_{2k}=(2,4,6,\ldots,4n-6,4n-3,4n-2,4n,4n+2,4n+5,4n+6,4n+8,\ldots,8n).$ Since $E_{k,2n-2}=4n-5$ and $N_{4n-3,2n-2}\geq 1,$ we have $N_{4n-4,2n-2} \leq k-1.$ Thus, $E_{1,2n-1}=4n-4.$ Further, note that since $E_{2k,2n-2}=4n-3,$ we have $4n-4 \leq E_{i,2n-2} \leq 4n-3$ for all $k+1 \leq i \leq 2k.$ Since $E_{2k,2n-1}=4n-2,$ we have $4n-4 \leq E_{i,2n-1} \leq 4n-2$ for all $1 \leq i \leq 2k.$ Thus, $\sum_{i=4n-4}^{4n-2}\sum_{j=2n-2}^{2n-1}N_{i,j}=3k=\sum_{i=4n-4}^{4n-2}c_{\Gamma}(i).$ Hence, $E_{1,2n}=4n-1.$ Therefore, $r_1=(1,3,5,\ldots,4n-7,4n-5,4n-4,4n-1,4n+1,4n+3,4n+4,4n+7,\ldots,8n-1)$ and hence, $X_{2}$ is a factor of $f.$  
		
		\textbf{Case III:} Assume that $r_{2k}=(2,4,6,\ldots,4n-6,4n-4,4n-1,4n,4n+3,4n+4,4n+6,4n+8,\ldots,8n).$ Then by using the argument similar to Case II we get $X_{3}$ is a factor of $f.$ 
		
		\textbf{Case IV:} Assume that $r_{2k}=(2,4,6,\ldots,4n-6,4n-3,4n-1,4n,4n+3,4n+5,4n+6,4n+8,\ldots,8n).$ Since $N_{4n-3,2n-2}\geq 1,$ we have $N_{4n-4,2n-2} \leq k-1.$ Thus, $E_{1,2n-1}=4n-4.$ Since $E_{2k,2n}=4n,$ we have $\sum_{i=1}^{2k}\sum_{j=1}^{4n}N_{i,j}=4nk=\sum_{i=1}^{4n} c_{\Gamma}(i).$  Thus all the integers between $1$ and $4n$ appear up to $2n$-th column. Hence, $E_{i,2n}=4n$ for all $k+1 \leq i \leq 2k.$ Since $E_{2k,2n-1}=4n-1,$ we have $4n-3 \leq E_{1,2n} \leq 4n-2.$ Thus, $r_1$ is either $(1,3,5,\ldots,4n-7,4n-5,4n-4,4n-2,4n+1,4n+2,4n+4,4n+7,\ldots,8n-1)$ or $(1,3,5,\ldots,4n-7,4n-5,4n-4,4n-3,4n+1,4n+3,4n+8,4n+9,\ldots,8n-1).$  If $r_1=(1,3,5,\ldots,4n-7,4n-5,4n-4,4n-2,4n+1,4n+2,4n+4,4n+7,\ldots,8n-1),$ then $r_1$ and $r_{2k}$ together gives a factor $X_{4}$ of $f.$ If $r_1=(1,3,5,\ldots,4n-7,4n-5,4n-4,4n-3,4n+1,4n+2,4n+3,4n+7,4n+9,\ldots,8n-1),$ then $E_{k+1,2n-2}=4n-4.$ Therefore, $E_{k,2n-1}$ can not be $4n-4$ and $4n-3.$ Hence, $E_{k,2n-1}=4n-2.$ Therefore, $r_k=(1,3,5,\ldots,4n-7,4n-5,4n-2,4n-1,4n+1,4n+4,4n+5,4n+7,\ldots,8n-1)$ and $r_{k+1}=(2,4,6,\ldots,4n-6,4n-4,4n-2,4n,4n+2,4n+4,4n+6,4n+8,\ldots,8n).$ Therefore, $r_1, r_{k}, r_{k+1}, r_{2k}$ together gives a factor $Y_4$ of $f.$
		
		\textbf{Case V:} Assume that $r_{2k}=(2,4,6,\ldots,4n-6,4n-4,4n+1,4n+2,4n+3,4n+4,4n+6,4n+8,\ldots,8n).$ Then by using the argument similar to Case II we get $X_{5}$ as a factor of $f.$ 
		
		\textbf{Case VI:} Assume that $r_{2k}=(2,4,6,\ldots,4n-6,4n-3,4n+1,4n+2,4n+3,4n+5,4n+6,4n+8,\ldots,8n).$ Since $E_{2k,2n-2}=4n-3,$ we have $E_{1,2n-1}=4n-4.$ Thus, $E_{1,2n+1} \geq 4n-2.$ If $E_{1,2n+1} \geq 4n+1,$ then $\sum_{i=4n+1}^{8n}\sum_{j=2n-1}^{4n}N_{i,j} \geq 4nk+2,$ which is a contradiction to \cref{eq5.2}. Also, if $E_{1,2n+1}=4n,$ then the number of integers less than or equal to $4n$ appearing in $r_1$ is odd, which is a contradiction. Hence, $E_{1,2n+1}$ is either $4n-2$ or $4n-1.$ If $E_{1,2n+1}=4n-2,$ then $E_{1,2n}=4n-3.$ If $E_{1,2n+1}=4n-1,$ then $E_{1,2n}$ is either $4n-3$ or $4n-2.$ Thus $r_1$ has following possibilities: 
		\begin{itemize}
			\item $(1,3,5,\ldots,4n-7,4n-5,4n-4,4n-2,4n-1,4n,4n+4,4n+7,\ldots,8n-1)$
			\item $(1,3,5,\ldots,4n-7,4n-5,4n-4,4n-3,4n-1,4n,4n+3,4n+7,\ldots,8n-1)$
			\item $(1,3,5,\ldots,4n-7,4n-5,4n-4,4n-3,4n-2,4n-1,4n+1,4n+7,\ldots,8n-1)$
			\item $(1,3,5,\ldots,4n-7,4n-5,4n-4,4n-3,4n-2,4n,4n+2,4n+7,\ldots,8n-1).$
		\end{itemize}  
		
		\textbf{Subcase (i):} If $r_1=(1,3,5,\ldots,4n-7,4n-5,4n-4,4n-2,4n-1,4n,4n+4,4n+7,\ldots,8n-1),$ then $X_6$ is a factor of $f.$
		
		\textbf{Subcase (ii):} If $r_{1}=(1,3,5,\ldots,4n-7,4n-5,4n-4,4n-3,4n-1,4n,4n+3,4n+7,\ldots,8n-1),$ then $N_{4n-3,2n-2} \leq k-1.$ Hence, $E_{k+1, 2n-2}=4n-4.$ We claim that $r_{k}$ has exactly $2n$ number of entries less than or equal to $4n.$ If $r_k$ has $2n+2$ number of entries less than or equal to $4n,$ then by counting the number of integers we have $r_{i}$ (for all $k+1 \leq i \leq 2k$) has $2n-2$ number of entries less than or equal to $4n.$ Then $r_{i}$ ($k+1 \leq i \leq 2k$) does not contain $4n-2.$ Hence, $4n-2$ appear in the rows $r_i$ ($2 \leq i \leq k$), which is a contradiction to \cref{eq5.2}. If $r_{k}$ has $2n-2$ number of entries less than or equal to $4n,$ then $\sum_{i=1}^{4n}\sum_{j=1}^{2n+2}N_{i,j} \leq (k-1)(2n+2)+(k+1)(2n-2)=4nk-4,$ which is a contradiction to \cref{eq5.2}. Thus, $r_{k}$ has exactly $2n$ number of entries less than or equal to $4n.$ Then $E_{k,2n-1}\in\{4n-4, 4n-3, 4n-2, 4n-1\}.$ Note that $E_{k,2n-1}$ can not be  $4n-4$ and $4n-3,$ otherwise, it contradicts the number of appearance of $4n-4$ and $4n-3.$ If $E_{k,2n-1}=4n-1,$ then $4n-2$ appears in the rows $r_{i}$ ($2 \leq i \leq k-1$), which is a contradiction to \cref{eq5.2}. Thus, $E_{k,2n-1}=4n-2.$ Hence, $r_k$ has the following possibilities:
		\begin{itemize}
			\item $(1,3,5,\ldots,4n-7,4n-5,4n-2,4n-1,4n+1,4n+4,4n+5,4n+7,\ldots,8n-1)$
			\item $(1,3,5,\ldots,4n-7,4n-5,4n-2,4n,4n+2,4n+4,4n+5,4n+7,\ldots,8n-1).$
		\end{itemize} 
		Since $r_{k}$ has exactly $2n$ number of entries less than or equal to $4n,$ $r_{k+1}$ has at most $2n$ number of entries less than or equal to $4n.$ If $r_{k+1}$ has $2n-2$ number of entries less than or equal to $4n,$ then $\sum_{i=1}^{4n}\sum_{j=1}^{2n+2}N_{i,j} \leq (k-1)(2n+2)+2n+(k)(2n-2)=4nk-2,$ which is a contradiction to \cref{eq5.2}. Thus, $r_{k+1}$ has exactly $2n$ number of entries less than or equal to $4n.$ Now we consider $r_k$ and $r_{k+1}$ case by case. 
		
		If $r_k=(1,3,5,\ldots,4n-7,4n-5,4n-2,4n-1,4n+1,4n+4,4n+5,4n+7,\ldots,8n-1),$ then $r_{k+1}$ has the following possibilities: 
		\begin{itemize}
			\item $(2,4,6,\ldots,4n-6,4n-4,4n-2,4n-1,4n+1,4n+4,4n+6,4n+8,\ldots,8n)$
			\item $(2,4,6,\ldots,4n-6,4n-4,4n-1,4n,4n+3,4n+4,4n+6,4n+8,\ldots,8n)$
			\item $(2,4,6,\ldots,4n-6,4n-4,4n-2,4n,4n+2,4n+4,4n+6,4n+8,\ldots,8n).$
		\end{itemize}
		If $r_{k+1}=(2,4,6,\ldots,4n-6,4n-4,4n-2,4n-1,4n+1,4n+4,4n+6,4n+8,\ldots,8n),$ then there exists at least one row $r_i$ ($k+2 \leq i \leq 2k-1$) which contains $4n$. Let $k_1$ be the number of rows between $r_{k+2}$ and $r_{2k-1}$ which contains $4n.$ Thus, $ k-k_1 \geq 1.$ Hence, $k-k_1$ is the number of rows between $r_{1}$ and $r_{k-1}$ which contains $4n.$ Then $\sum_{i=1}^{4n}\sum_{j=1}^{2n+2}N_{i,j} \geq (2n+2)(k-k_1)+2n(k_1)+2n+2n(k_1)+(2n)(k-k_1-1)=4nk+2(k-k_1)>4nk,$ which is a contradiction to \cref{eq5.2}. Therefore, $r_{k+1}$ can not be $(2,4,6,\ldots,4n-6,4n-4,4n-2,4n-1,4n+1,4n+4,4n+6,4n+8,\ldots,8n.$ If $r_{k+1}=(2,4,6,\ldots,4n-6,4n-4,4n-1,4n,4n+3,4n+4,4n+6,4n+8,\ldots,8n),$ then $4n-2$ can appear at most $k-1$ times (namely, between $r_2$ and $r_{k}$), which is a contradiction to \cref{eq5.2}. Therefore, $r_{k+1}$ can not be $(2,4,6,\ldots,4n-6,4n-4,4n-1,4n,4n+3,4n+4,4n+6,4n+8,\ldots,8n).$
		If $r_{k+1}=(2,4,6,\ldots,4n-6,4n-4,4n-2,4n,4n+2,4n+4,4n+6,4n+8,\ldots,8n),$ then $r_1,r_{k}, r_{k+1}$ and $r_{2k}$ together gives a factor $Y_3$ of $f.$  
		
		If $r_k=(1,3,5,\ldots,4n-7,4n-5,4n-2,4n,4n+2,4n+4,4n+5,4n+7,\ldots,8n-1),$ then $r_{k+1}$ has following possibilities:
		\begin{itemize}
			\item $(2,4,6,\ldots,4n-6,4n-4,4n-2,4n,4n+2,4n+4,4n+6,4n+8,\ldots,8n),$
			\item $(2,4,6,\ldots,4n-6,4n-4,4n-1,4n,4n+3,4n+4,4n+6,4n+8,\ldots,8n).$
		\end{itemize} 
		Assume that $r_{k+1}=(2,4,6,\ldots,4n-6,4n-4,4n-2,4n,4n+2,4n+4,4n+6,4n+8,\ldots,8n).$ Let $k_1$ be the number of rows between $r_{k+1}$ and $r_{2k-1}$ which contains $4n$ and $k_2$ be the number of rows between $r_1$ and $r_{k-1}$ whose $2n+2$-th entry is $4n$. Thus, $\sum_{i=1}^{4n}\sum_{j=1}^{2n+2}N_{i,j} = (2n+2)k_2+2n(k-k_2)+2nk_1+(2n-2)(k-k_1)=(4n-2)k+2(k_1+k_2),$ which is the total number of appearance of the integers between $1$ and $4n.$ Hence, $k_1+k_2=k.$ Then $N_{4n,2n}+N_{4n,2n+2}\geq k_1+k_2+1$ which is a contradiction to \cref{eq5.2}. Therefore, $r_{k+1}$ can not be $(2,4,6,\ldots,4n-6,4n-4,4n-2,4n,4n+2,4n+4,4n+6,4n+8,\ldots,8n).$ If $r_{k+1}=(2,4,6,\ldots,4n-6,4n-4,4n-1,4n,4n+3,4n+4,4n+6,4n+8,\ldots,8n),$ then $4n-2$ can appear at most $k-1$ times in $\Gamma$ (namely, between $r_{2}$ and $r_k$). Hence, $\sum_{j=1}^{2n+2}N_{4n-2,j} \leq k-1,$ which is a contradiction to \cref{eq5.2}. Therefore, $r_{k+1}$ can not be $(2,4,6,\ldots,4n-6,4n-4,4n-1,4n,4n+3,4n+4,4n+6,4n+8,\ldots,8n).$
		
		\textbf{Subcase (iii):} If $r_1=(1,3,5,\ldots,4n-7,4n-5,4n-4,4n-3,4n-2,4n-1,4n+1,4n+7,\ldots,8n-1),$ then $N_{4n-3,2n-2} \leq k-1.$ Thus, $E_{k+1,2n-2}=4n-4.$ We claim that $r_{k}$ has exactly $2n$ number of entries less than or equal to $4n.$ If $r_k$ has $2n+2$ number of entries less than or equal to $4n,$ then by counting the number of integers between $1$ and $4n$ we have $r_{i}$ (for $k+1 \leq i \leq 2k$) has $2n-2$ number of entries less than or equal to $4n.$ Then $r_{i}$ ($k+1 \leq i \leq 2k$) does not contain $4n.$ Hence, $4n$ can appear in the rows $r_i$ ($2 \leq i \leq k$), which is a contradiction to \cref{eq5.2}. If $r_{k}$ has $2n-2$ number of entries less than or equal to $4n,$ then $\sum_{i=1}^{4n}\sum_{j=1}^{2n+2}N_{i,j} \leq (k-1)(2n+2)+(k+1)(2n-2)=4nk-4,$ which is a contradiction to \cref{eq5.2}. Thus, $r_{k}$ has exactly $2n$ number of entries less than or equal to $4n.$ Then $E_{k,2n-1}\in\{4n-4, 4n-3, 4n-2, 4n-1\}.$ Note that $E_{k,2n-1}$ can not be  $4n-4$ and $4n-3,$ otherwise, it contradicts the number of appearance of $4n-4$ and $4n-3.$ If $E_{k,2n-1}=4n-1,$ then $4n-2$ appears in the rows $r_{i}$ ($1 \leq i \leq k-1$), which is a contradiction to \cref{eq5.2}. Thus, $E_{k,2n-1}=4n-2.$ Hence, $r_k$ has the following possibilities:
		\begin{itemize}
			\item $(1,3,5,\ldots,4n-7,4n-5,4n-2,4n-1,4n+1,4n+4,4n+5,4n+7,\ldots,8n-1)$
			\item $(1,3,5,\ldots,4n-7,4n-5,4n-2,4n,4n+2,4n+4,4n+5,4n+7,\ldots,8n-1).$
		\end{itemize}
		Since $r_{k}$ has exactly $2n$ number of entries less than or equal to $4n,$ $r_{k+1}$ has at most $2n$ number of entries less than or equal to $4n.$ If $r_{k+1}$ has $2n-2$ number of entries less than or equal to $4n,$ then $\sum_{i=1}^{4n}\sum_{j=1}^{2n+2}N_{i,j} \leq (k-1)(2n+2)+2n+k(2n-2)=4nk-2,$ which is a contradiction to \cref{eq5.2}. Thus, $r_{k+1}$ has exactly $2n$ number of entries less than or equal to $4n.$ Now we consider $r_k$ and $r_{k+1}$ case by case. 
		
		Assume that $r_{k}=(1,3,5,\ldots,4n-7,4n-5,4n-2,4n-1,4n+1,4n+4,4n+5,4n+7,\ldots,8n-1).$ Let $k_1$ be the number of rows between $r_2$ and $r_{k-1}$ which contains $4n.$ Then $k-k_1$ be the number of rows between $r_{k+1}$ and $r_{2k-1}$ which contains $4n.$ Then $\sum_{i=1}^{4n}\sum_{j=1}^{2n+2}N_{i,j} \geq (2n+2)(k_1+1)+2n(k-k_1-1)+2n(k-k_1)+(2n-2)k_1=4nk+2,$ which is a contradiction to \cref{eq5.2}. 
		
		If $r_k=(1,3,5,\ldots,4n-7,4n-5,4n-2,4n,4n+2,4n+4,4n+5,4n+7,\ldots,8n-1),$  then $r_{k+1}$ has the following possibilities: 
		\begin{itemize}
			\item $(2,4,6,\ldots,4n-6,4n-4,4n-2,4n,4n+2,4n+4,4n+6,4n+8,\ldots,8n)$
			\item $(2,4,6,\ldots,4n-6,4n-4,4n-1,4n,4n+3,4n+4,4n+6,4n+8,\ldots,8n).$ 
		\end{itemize} If $r_{k+1}=(2,4,6,\ldots,4n-6,4n-4,4n-1,4n,4n+3,4n+4,4n+6,4n+8,\ldots,8n),$ then $r_1,$ $r_k,$ $r_{k+1}$ and $r_{2k}$ together gives a factor $Y_1$ of $f.$ Assume that $r_{k+1}=(2,4,6,\ldots,4n-6,4n-4,4n-2,4n,4n+2,4n+4,4n+6,4n+8,\ldots,8n).$ Since $r_k$ does not contain $4n-1,$ there exists $l$ for some $k+2 \leq l \leq 2k-1$ such that $r_{l}$ contains $4n-1.$ Hence, $r_l$ is either $(2,4,6,\ldots,4n-6,4n-4,4n-1,4n,4n+3,4n+4,4n+6,4n+8,\ldots,8n)$ or $(2,4,6,\ldots,4n-6,4n-3,4n-1,4n,4n+3,4n+5,4n+6,4n+8,\ldots,8n)$. If $r_l=(2,4,6,\ldots,4n-6,4n-4,4n-1,4n,4n+3,4n+4,4n+6,4n+8,\ldots,8n),$ then $r_1, r_{k}, r_{l}, r_{2k}$ gives a factor $Y_1$ of $f$.
		Now, we consider $r_l=(2,4,6,\ldots,4n-6,4n-3,4n-1,4n,4n+3,4n+5,4n+6,4n+8,\ldots,8n).$
		
		Assume that $k$ be even. Then $r_{\frac{k}{2}+1}$ is one of the following:
		\begin{itemize}
			\item $(1,3,5,\ldots,4n-7,4n-5,4n-4,4n-3,4n-2,4n-1,4n+1,4n+7,\ldots,8n-1)$
			\item $(1,3,5,\ldots,4n-7,4n-5,4n-4,4n-3,4n-2,4n,4n+2,4n+7,\ldots,8n-1)$
			\item $(1,3,5,\ldots,4n-7,4n-5,4n-4,4n-3,4n-1,4n,4n+3,4n+7,\ldots,8n-1)$
			\item $(1,3,5,\ldots,4n-7,4n-5,4n-4,4n-3,4n+1,4n+2,4n+3,4n+7,\ldots,8n-1)$
			\item $(1,3,5,\ldots,4n-7,4n-5,4n-4,4n-2,4n-1,4n,4n+4,4n+7,\ldots,8n-1)$
			\item $(1,3,5,\ldots,4n-7,4n-5,4n-4,4n-2,4n+1,4n+2,4n+4,4n+7,\ldots,8n-1)$
			\item $(1,3,5,\ldots,4n-7,4n-5,4n-4,4n-1,4n+1,4n+2,4n+3,4n+8,\ldots,8n-1)$
			\item $(1,3,5,\ldots,4n-7,4n-5,4n-4,4n,4n+2,4n+3,4n+4,4n+7,\ldots,8n-1)$
			\item $(1,3,5,\ldots,4n-7,4n-5,4n-3,4n-2,4n-1,4n,4n+5,4n+7,\ldots,8n-1)$
			\item $(1,3,5,\ldots,4n-7,4n-5,4n-3,4n-2,4n+1,4n+2,4n+5,4n+7,\ldots,8n-1)$
			\item $(1,3,5,\ldots,4n-7,4n-5,4n-3,4n-1,4n+1,4n+3,4n+5,4n+7,\ldots,8n-1)$
			\item $(1,3,5,\ldots,4n-7,4n-5,4n-3,4n,4n+2,4n+3,4n+5,4n+7,\ldots,8n-1)$
			\item $(1,3,5,\ldots,4n-7,4n-5,4n-2,4n-1,4n+1,4n+4,4n+5,4n+7,\ldots,8n-1)$
			\item $(1,3,5,\ldots,4n-7,4n-5,4n-2,4n,4n+2,4n+4,4n+5,4n+7,\ldots,8n-1).$
		\end{itemize}

		If $r_{\frac{k}{2}+1}$ is one of the following:
		\begin{itemize}
			\item $(1,3,5,\ldots,4n-7,4n-5,4n-4,4n-3,4n-2,4n-1,4n+1,4n+7,\ldots,8n-1),$
			\item $(1,3,5,\ldots,4n-7,4n-5,4n-4,4n-3,4n-2,4n,4n+2,4n+7,\ldots,8n-1),$
			\item $(1,3,5,\ldots,4n-7,4n-5,4n-4,4n-3,4n-1,4n,4n+3,4n+7,\ldots,8n-1),$
			\item $(1,3,5,\ldots,4n-7,4n-5,4n-4,4n-3,4n+1,4n+2,4n+3,4n+7,\ldots,8n-1),$
		\end{itemize}
		then $\sum_{i=4n-4}^{4n-3}\sum_{j=2n-2}^{2n}(N_{i,j}) \geq 2k+2,$ which is a contradiction to \cref{eq5.2}. 
		
		If $r_{\frac{k}{2}+1}=(1,3,5,\ldots,4n-7,4n-5,4n-4,4n-2,4n-1,4n,4n+4,4n+7,\ldots,8n-1),$ then $r_{\frac{k}{2}+1}, r_{2k}$ gives a factor $X_6$ of $f.$
		
		If $r_{\frac{k}{2}+1}=(1,3,5,\ldots,4n-7,4n-5,4n-4,4n-2,4n+1,4n+2,4n+4,4n+7,\ldots,8n-1),$ then $r_{\frac{k}{2}+1}, r_{l}$ gives a factor $X_4$ of $f.$
		
		If $r_{\frac{k}{2}+1}=(1,3,5,\ldots,4n-7,4n-5,4n-4,4n-1,4n+1,4n+2,4n+3,4n+8,\ldots,8n-1),$ then $r_1,r_{\frac{k}{2}+1},r_{k}, r_{k+1}, r_{l},r_{2k}$ gives a factor $Z_1$ of $f.$
		
		If $r_{\frac{k}{2}+1}=(1,3,5,\ldots,4n-7,4n-5,4n-4,4n,4n+2,4n+3,4n+4,4n+7,\ldots,8n-1),$ then we consider $r_{\frac{3k}{2}}.$ Note that $E_{\frac{3k}{2}, 2n-2}$ can not be $4n-4,$ otherwise it contradicts the number of appearances of $4n-4$ in $\Gamma.$ Thus, $r_{\frac{3k}{2}}$ is one of the following:
		\begin{itemize}
			\item 	$ (2,4,6,\ldots,4n-6,4n-3,4n+1,4n+2,4n+3,4n+5,4n+6,4n+8,\ldots,8n),$
			\item $(2,4,6,\ldots,4n-6,4n-3,4n-1,4n,4n+3,4n+5,4n+6,4n+8,\ldots,8n),$
			\item $(2,4,6,\ldots,4n-6,4n-3,4n-2,4n,4n+2,4n+5,4n+6,4n+8,\ldots,8n)$.
		\end{itemize}
		If $r_{\frac{3k}{2}}=(2,4,6,\ldots,4n-6,4n-3,4n+1,4n+2,4n+3,4n+5,4n+6,4n+8,\ldots,8n),$ then $E_{i,2n}=4n+2$ for all $\frac{3k}{2} \leq i \leq 2k$ and $E_{i,2n+1}=4n+2$ for all $\frac{k}{2}+1 \leq i \leq k+1.$ Thus $N_{4n+2,2n}+N_{4n+2,2n+1} \geq k+2,$ which is a contradiction to \cref{eq5.2}.  If $r_{\frac{3k}{2}}=(2,4,6,\ldots,4n-6,4n-3,4n-1,4n,4n+3,4n+5,4n+6,4n+8,\ldots,8n)$ or $(2,4,6,\ldots,4n-6,4n-3,4n-2,4n,4n+2,4n+5,4n+6,4n+8,\ldots,8n),$ then $E_{i,2n}=4n$ for all $\frac{k}{2}+1 \leq i \leq \frac{3k}{2}.$ Thus, the total number of integers less than or equal to $4n$ appearing between $r_{\frac{k}{2}+1}$ and $r_{2k}$ is $2nk+\frac{k}{2}(2n-2)=3nk-k.$  Hence,  the total number of integers less than or equal to $4n$ appearing between $r_{1}$ and $r_{\frac{k}{2}}$ is $4nk-(3nk-k)=nk+k=\frac{k}{2}(2n+2).$ Thus, each row between $r_1$ and $r_{\frac{k}{2}}$ contains exactly $2n+2$ number of entries less than or equal to $4n.$ Therefore,  $r_i=(1,3,5,\ldots,4n-7,4n-5,4n-4,4n-3,4n-2,4n-1,4n+1,4n+7,\ldots,8n-1)$ for all $1 \leq i \leq \frac{k}{2}.$ Thus, $N_{4n-4,2n-2}+N_{4n-3,2n-2}+N_{4n-4,2n+1}+N_{4n-3,2n} \geq 2k+1,$ which is a contradiction to \cref{eq5.2}.
		
		If $r_{\frac{k}{2}+1}=(1,3,5,\ldots,4n-7,4n-5,4n-3,4n-2,4n-1,4n,4n+5,4n+7,\ldots,8n-1),$ then we consider $r_{\frac{3k}{2}}.$ Note that $E_{\frac{3k}{2}, 2n-2}$ can not be $4n-3,$ otherwise it contradicts the number of appearances of $4n-4$ in $\Gamma.$ Thus, $r_{\frac{3k}{2}}$ is one of the following:
		\begin{itemize}
			\item 	$ (2,4,6,\ldots,4n-6,4n-4,4n-2,4n,4n+2,4n+4,4n+6,4n+8,\ldots,8n)$
			\item $ (2,4,6,\ldots,4n-6,4n-4,4n-1,4n,4n+2,4n+4,4n+6,4n+8,\ldots,8n).$
		\end{itemize}
		If $r_{\frac{3k}{2}}=(2,4,6,\ldots,4n-6,4n-4,4n-2,4n,4n+2,4n+4,4n+6,4n+8,\ldots,8n),$ then $\sum_{i=4n-4}^{4n-2}\sum_{j=2n-2}^{2n}N_{i,j}\geq k+\frac{3k}{2}+\frac{k}{2}+1=3k+1,$ which is a contradiction to \cref{eq5.2}. If $r_{\frac{3k}{2}}=(2,4,6,\ldots,4n-6,4n-4,4n-1,4n,4n+2,4n+4,4n+6,4n+8,\ldots,8n),$ then $r_1, r_{k}, r_{\frac{3k}{2}}, r_{2k}$ gives a factor $Y_1$ of $f.$
		
		If $r_{\frac{k}{2}+1}=(1,3,5,\ldots,4n-7,4n-5,4n-3,4n-2,4n+1,4n+2,4n+5,4n+7,\ldots,8n-1),$  then $r_{\frac{3k}{2}}$ is either $(2,4,6,\ldots,4n-6,4n-4,4n-2,4n,4n+2,4n+4,4n+6,4n+8,\ldots,8n)$ or $(2,4,6,\ldots,4n-6,4n-4,4n-1,4n,4n+3,4n+4,4n+6,4n+8,\ldots,8n).$  Further, If $r_{\frac{3k}{2}}=(2,4,6,\ldots,4n-6,4n-4,4n-2,4n,4n+2,4n+4,4n+6,4n+8,\ldots,8n),$ then $\sum_{i=4n-4}^{4n-2}N_{i,2n-2}+N_{i,2n-1}+N_{i,2n} \geq 3k+1,$ which is a contradiction to \cref{eq5.2}. On the other hand, if $r_{\frac{3k}{2}}=(2,4,6,\ldots,4n-6,4n-4,4n-1,4n,4n+3,4n+4,4n+6,4n+8,\ldots,8n),$ then $r_{\frac{k}{2}+1}, r_{\frac{3k}{2}}$ gives a factor $X_{3}$  of $f.$ 
		
		If $r_{\frac{k}{2}+1}=(1,3,5,\ldots,4n-7,4n-5,4n-3,4n-1,4n+1,4n+3,4n+5,4n+7,\ldots,8n-1),$ then $r_{\frac{k}{2}+1}, r_{k+1}$ gives a factor $X_2$ of $f.$ 
		
		If $r_{\frac{k}{2}+1}=(1,3,5,\ldots,4n-7,4n-5,4n-3,4n,4n+2,4n+3,4n+5,4n+7,\ldots,8n-1),$ then $r_{\frac{3k}{2}}$ is either $(2,4,6,\ldots,4n-6,4n-4,4n-2,4n,4n+2,4n+4,4n+6,4n+8,\ldots,8n)$ or $(2,4,6,\ldots,4n-6,4n-4,4n-1,4n,4n+3,4n+4,4n+6,4n+8,\ldots,8n)$. In both cases $E_{i,2n}=4n$ for all $\frac{k}{2}+1 \leq i \leq \frac{3k}{2}.$ Thus, $r_i=(1,3,5,\ldots,4n-7,4n-5,4n-4,4n-3,4n-2,4n-1,4n+1,4n+7,\ldots,8n-1)$ for all $1 \leq i \leq \frac{k}{2}.$ Thus, $N_{4n-4,2n-2}+N_{4n-3,2n-2}+N_{4n-4,2n+1}+N_{4n-3,2n} \geq 2k+1,$ which is a contradiction to \cref{eq5.2}
		
		If $r_{\frac{k}{2}+1}=(1,3,5,\ldots,4n-7,4n-5,4n-2,4n-1,4n+2,4n+3,4n+5,4n+7,\ldots,8n-1),$ then $r_{\frac{3k}{2}}$ is either  $(2,4,6,\ldots,4n-6,4n-4,4n-2,4n,4n+2,4n+4,4n+6,4n+8,\ldots,8n)$ or $(2,4,6,\ldots,4n-6,4n-4,4n-1,4n,4n+3,4n+4,4n+6,4n+8,\ldots,8n)$. If $r_{\frac{3k}{2}}=(2,4,6,\ldots,4n-6,4n-4,4n-2,4n,4n+2,4n+4,4n+6,4n+8,\ldots,8n),$ then $N_{4n-2,2n-1}+N_{4n-2,2n+1} \geq k+1,$ which is a contradiction to \cref{eq5.2}. If $r_{\frac{3k}{2}}=(2,4,6,\ldots,4n-6,4n-4,4n-1,4n,4n+3,4n+4,4n+6,4n+8,\ldots,8n),$ then $r_1, r_{k}, r_{\frac{3k}{2}}, r_{2k}$ gives a factor $Y_1$ of $f.$
		
		If $r_{\frac{k}{2}+1}=(1,3,5,\ldots,4n-7,4n-5,4n-2,4n,4n+2,4n+4,4n+5,4n+7,\ldots,8n-1),$ then $r_{\frac{3k}{2}}$ is either $(2,4,6,\ldots,4n-6,4n-4,4n-2,4n,4n+2,4n+4,4n+6,4n+8,\ldots,8n)$ or $(2,4,6,\ldots,4n-6,4n-4,4n-1,4n,4n+3,4n+4,4n+6,4n+8,\ldots,8n).$  If $r_{\frac{3k}{2}}=(2,4,6,\ldots,4n-6,4n-4,4n-2,4n,4n+2,4n+4,4n+6,4n+8,\ldots,8n)$ then $N_{4n-2,2n-1}+N_{4n-2,2n+1} \geq k+1,$ which is a contradiction to \cref{eq5.2}. If $r_{\frac{3k}{2}}=(2,4,6,\ldots,4n-6,4n-4,4n-1,4n,4n+3,4n+4,4n+6,4n+8,\ldots,8n),$ then $r_1, r_{k}, r_{\frac{3k}{2}}, r_{2k}$ gives a factor $Y_1$  of $f.$
		
		If $k+1$ is even, then the proof is similar. 
		
		\textbf{Subcase (iv):} Assume that $r_{1}=(1,3,5,\ldots,4n-7,4n-5,4n-4,4n-3,4n-2,4n,4n+2,4n+7,\ldots,8n-1).$ Then the proof is similar to the proof of Subcase (iii).  	
	\end{proof}
	
	\begin{lemma}\label{lem5.2}
		The following relations among $X_i$'s $(1 \leq i \leq 6),$ $Y_j$'s ($1 \leq j \leq 4$) hold in $R_2.$
		\begin{itemize}
			\item[(i)] $X_4X_5-X_3X_6+Y_2-Y_1=0.$
			\item[(ii)] $X_2X_5-X_1X_6+Y_3-Y_1=0.$
			\item[(iii)] $X_2X_3-X_1X_4+Y_4-Y_1=0.$
		\end{itemize}
	\end{lemma}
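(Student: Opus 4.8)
The plan is to pass to the opposite cell $O^{-}_{G}\simeq SkM_{4n}$, rewrite every monomial occurring in \cref{lem5.2} as a product of Pfaffians, and then deduce each of the three identities from the Pfaffian relation of \cref{thm2.6}. This is exactly the device used in the worked example for $Spin(8,\mathbb{C})$, where the straightening $\Gamma_4=\Gamma_1-\Gamma_2+\Gamma_3$ was read off from \cref{thm2.6}; the task here is to carry it out uniformly in $n$.

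First I would fix the dictionary between tableaux and Pfaffians. Each row of length $4n$ occurring in the $X_i$ and $Y_j$ is an element $\underline{i}\in I_{4n,8n}$, and by \cref{rmk2.7} and \cref{rmk2.8} its restriction to $SkM_{4n}$ is the Pfaffian $P(I)$ for the even-cardinality subset $I\subseteq\{1,\dots,4n\}$ determined by the canonical dual pair $(\underline{i}(A),\underline{i}(B))$. Under this dictionary a degree-one monomial $X_i$ (two rows) is a product of two Pfaffians and a degree-two monomial $Y_j$ (four rows) is a product of four Pfaffians, so each product $X_aX_b$ in \cref{lem5.2} is again a product of four Pfaffians and every term of (i)--(iii) lies in the single graded piece $R_2$.

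Next I would localize. Comparing the tableaux shows that $X_1,\dots,X_6,Y_1,\dots,Y_4$ agree outside a fixed central block of columns, only the entries of the window $\{4n-4,\dots,4n+5\}$ being permuted; since the straightening law only alters the rows and columns where two factors fail to be standard, the outer columns are spectators throughout. Concretely, for relation (i) I would straighten the non-standard four-row products $X_4X_5$ and $X_3X_6$ using \cref{thm2.6} on the unique offending pair of rows, which lies in the window; expressed in standard form all four monomials $X_4X_5,X_3X_6,Y_1,Y_2$ share a common pair of spectator rows (a common factor $P(C)P(D)$), and the residual four-term identity among the two active Pfaffians is precisely an instance of $\sum_{\tau=1}^{4}(-1)^{\tau}P(I_1\Delta\{i_\tau\})P(I_2\Delta\{i_\tau\})=0$ for an explicit pair $I_1,I_2\subseteq\{1,\dots,4n\}$ of odd cardinality with $\#(I_1\Delta I_2)=4$. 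Matching the four summands via \cref{rmk2.8} then yields (i); relations (ii) and (iii) follow from the analogous choices of $I_1,I_2$.

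The part I expect to be delicate is the bookkeeping rather than the idea. One must verify that, after forming the products, the only non-standard pair of rows is the one the straightening acts on, so that no standard monomials survive beyond the four recorded and the common spectator factor is genuinely shared by all terms. Moreover, the correspondence of \cref{rmk2.8} mixes complementation in $\{1,\dots,4n\}$ with the reflection $t\mapsto 8n+1-t$, so pinning down the correct $I_1,I_2$ and matching each summand $P(I_1\Delta\{i_\tau\})P(I_2\Delta\{i_\tau\})$ to its intended monomial with the right $(-1)^{\tau}$ sign is the step most prone to error. Once the subsets are identified, each identity follows by direct substitution, uniformly for $n\ge 2$.
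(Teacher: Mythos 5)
Your toolbox is the right one---the paper's proof is indeed carried out on the opposite cell via \cref{thm2.6}, \cref{rmk2.7} and \cref{rmk2.8}---but the structural claim on which your argument rests is false, and this is a genuine gap rather than mere bookkeeping. You assert that in each product $X_aX_b$ there is a \emph{unique} offending pair of rows and that, once it is straightened, the four monomials in the relation share a common spectator factor $P(C)P(D)$, so that the whole identity is one instance of $\sum_{\tau}(-1)^{\tau}P(I_1\Delta\{i_\tau\})P(I_2\Delta\{i_\tau\})=0$. Neither assertion holds. For (i), the four-row tableau of $X_4X_5$ has \emph{two} non-standard pairs: the two rows of type $(1,3,\ldots)$ violate columnwise monotonicity, and so do the two rows of type $(2,4,\ldots)$ (e.g.\ $4n-3$ sits above $4n-4$ in column $2n-2$). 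Moreover the eight Pfaffian factors of $X_4X_5$ and $X_3X_6$ are pairwise distinct, and neither shares a pair of rows with $Y_1$ or $Y_2$, so there is no common factor against which a single application of \cref{thm2.6} (whose terms are products of only two Pfaffians) could produce a four-term relation among products of four Pfaffians.

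What actually makes the lemma true---and what your proposal never invokes---is that the relations are asserted in $R_2$ for the Schubert variety $X_G(w_6)$, not on all of $G/P^{\alpha_{4n}}$. The paper straightens the top pair of rows (three standard terms) \emph{and} the bottom pair (again three terms a priori), and then discards the terms of the second straightening whose rows are not $\leq w_6$ in Bruhat order, since the corresponding sections vanish on $X_G(w_6)$; only after this collapse do all surviving four-row tableaux acquire a common bottom pair, yielding $X_4X_5=Y_1-Y_2+X_3X_6$. For (ii) and (iii) further ``mixed'' pairs (one row of each parity type) must also be straightened and collapsed using the same vanishing. On $G/P^{\alpha_{4n}}$ itself the identities as stated would pick up extra standard monomials. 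To repair your argument you would need to (a) straighten both non-standard pairs in each product, and (b) explicitly use the condition $r\leq w_6$ to kill the surplus terms; the self-check you flag as ``delicate'' would, as written, come out negative.
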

	\begin{proof} Proof of (i) : Note that
		
		$X_4X_5=$
		
		\tiny{\ytableausetup{boxsize=3.5em} \ytableausetup{centertableaux}
			\begin{ytableau}
				1 & 3 & 5  & \cdots & 4n-7 & 4n-5 & 4n-3 & 4n-2 & 4n-1 & 4n & 4n+5 & 4n+7 & 4n+9 & \cdots  & 8n-3 & 8n-1\\
				1 & 3 & 5  & \cdots & 4n-7 & 4n-5 & 4n-4 & 4n-2 & 4n+1 & 4n+2 & 4n+4 & 4n+7 & 4n+9 & \cdots  & 8n-3 & 8n-1\\
				2 & 4 & 6 & \cdots & 4n-6 & 4n-3 & 4n-1 & 4n & 4n+3 & 4n+5 & 4n+6 & 4n+8 & 4n+10 & \cdots & 8n-2 & 8n\\
				2 & 4 & 6 & \cdots & 4n-6 & 4n-4 & 4n+1 & 4n+2 & 4n+3 & 4n+4 & 4n+6 & 4n+8 & 4n+10 & \cdots & 8n-2 & 8n 
		\end{ytableau}}.
		
		\normalsize{Now we see that}
		
		\tiny{\ytableausetup{boxsize=3.5em} \ytableausetup{centertableaux}
			\begin{ytableau}
				1 & 3 & 5  & \cdots & 4n-7 & 4n-5 & 4n-3 & 4n-2 & 4n-1 & 4n & 4n+5 & 4n+7 & 4n+9 & \cdots  & 8n-3 & 8n-1\\
				1 & 3 & 5  & \cdots & 4n-7 & 4n-5 & 4n-4 & 4n-2 & 4n+1 & 4n+2 & 4n+4 & 4n+7 & 4n+9 & \cdots  & 8n-3 & 8n-1
		\end{ytableau}}
		
		\normalsize{is a non-standard monomial. We apply  \cref{thm2.6}, \cref{rmk2.7} and  \cref{rmk2.8} to express the above non-standard monomial as a linear combination of standard monomials.}
		
		Let $\underline{i}_{1}$=$(1, 3 ,5, \ldots, 4n-7, 4n-5, 4n-3, 4n-2, 4n-1, 4n,4n+5, 4n+7, 4n+9, \cdots, 8n-3, 8n-1),$ and  $\underline{i}_{2}$=$(1, 3 ,5, \cdots, 4n-7, 4n-5, 4n-4, 4n-2, 4n+1, 4n+2,4n+4, 4n+7, 4n+9, \ldots, 8n-3, 8n-1).$ Then $\underline{i}_{1}(B)=\{2,4,6,\ldots, 4n-8,4n-6, 4n-4\}$ and $\underline{i}_{2}(B)=\{2,4,6,\ldots, 4n-8,4n-6, 4n-3, 4n-1,4n\}$ are two subsets of $\{1,2,3,\ldots, 4n\}.$ 
		
		In order to apply Theorem \ref{thm2.6}, set $I_{1}=\{2,4,6,\ldots, 4n-8,4n-6\}$ and $I_{2}=\{2,4,6,\ldots,4n-8,4n-6,4n-4,4n-3,4n-1,4n\}.$ Note that $I_{1}\Delta I_{2}=\{4n-4,4n-3,4n-1,4n\}.$ Then we have 
		
		$P(I_{1}\Delta\{4n-4\})\cdot P(I_{2}\Delta \{4n-4\})-P(I_{1}\Delta\{4n-3\})\cdot P(I_{2}\Delta \{4n-3\})+P(I_{1}\Delta\{4n-1\})\cdot P(I_{2}\Delta \{4n-1\})-P(I_{1}\Delta\{4n\})\cdot P(I_{2}\Delta \{4n\})=0.$
		
		By using \cref{rmk2.8}, we have the following:
		
		\item $P(I_{1}\Delta\{4n-4\})=q_{\underline{i}_1}$ 
		\item $P(I_{2}\Delta \{4n-4\}=q_{\underline{i}_2}$ 
		\item $P(I_{1}\Delta\{4n-3\}=q_{(1, 3, 5, \ldots, 4n-7, 4n-5, 4n-4, 4n-2, 4n-1, 4n, 4n+4, 4n+7, \ldots,  8n-1)}$
		\item $P(I_{2}\Delta\{4n-3\}=q_{(1, 3, 5, \ldots, 4n-7, 4n-5, 4n-3, 4n-2, 4n+1, 4n+2, 4n+5, 4n+7, \ldots,  8n-1)}$
		\item $P(I_{1}\Delta\{4n-1\}=q_{(1, 3, 5, \ldots, 4n-7, 4n-5, 4n-4, 4n-3, 4n-2, 4n, 4n+2, 4n+7, \ldots,  8n-1)}$
		\item $P(I_{2}\Delta\{4n-1\}=q_{(1, 3, 5, \ldots, 4n-7, 4n-5, 4n-2, 4n-1, 4n+1, 4n+4, 4n+5, 4n+7, \ldots,  8n-1)}$
		\item $P(I_{1}\Delta\{4n\}=q_{(1, 3, 5, \ldots, 4n-7, 4n-5, 4n-4, 4n-3, 4n-2, 4n-1, 4n+1, 4n+7, \ldots,  8n-1)}$
		\item $P(I_{2}\Delta\{4n\}=q_{(1, 3, 5, \ldots, 4n-7, 4n-5, 4n-2, 4n, 4n+2, 4n+4, 4n+5, 4n+7, \ldots,  8n-1)}.$

		Therefore, we have the following straightening laws in $G/P^{\alpha_{4n}}$
		
		\tiny{\ytableausetup{boxsize=3.5em} \ytableausetup{centertableaux}
			\begin{ytableau}
				1 & 3 & 5  & \cdots & 4n-7 & 4n-5 & 4n-3 & 4n-2 & 4n-1 & 4n & 4n+5 & 4n+7 & 4n+9 & \cdots  & 8n-3 & 8n-1\\
				1 & 3 & 5  & \cdots & 4n-7 & 4n-5 & 4n-4 & 4n-2 & 4n+1 & 4n+2 & 4n+4 & 4n+7 & 4n+9 & \cdots  & 8n-3 & 8n-1
			\end{ytableau}\\
			$=$\hspace{.2cm}\begin{ytableau}
				1 & 3 & 5  & \cdots & 4n-7 & 4n-5 & 4n-4 & 4n-3 & 4n-2 & 4n-1 & 4n+1 & 4n+7 & 4n+9 & \cdots  & 8n-3 & 8n-1\\
				1 & 3 & 5  & \cdots & 4n-7 & 4n-5 & 4n-2 & 4n & 4n+2 & 4n+4 & 4n+5 & 4n+7 & 4n+9 & \cdots  & 8n-3 & 8n-1
			\end{ytableau}\\
			$-$~\hspace{.15cm}\begin{ytableau}
				1 & 3 & 5  & \cdots & 4n-7 & 4n-5 & 4n-4 & 4n-3 & 4n-2 & 4n & 4n+2 & 4n+7 & 4n+9 & \cdots  & 8n-3 & 8n-1\\
				1 & 3 & 5  & \cdots & 4n-7 & 4n-5 & 4n-2 & 4n-1 & 4n+1 & 4n+4 & 4n+5 & 4n+7 & 4n+9 & \cdots  & 8n-3 & 8n-1
			\end{ytableau}\\
			$+$~\hspace{.15cm}\begin{ytableau}
				1 & 3 & 5  & \cdots & 4n-7 & 4n-5 & 4n-4 & 4n-2 & 4n-1 & 4n & 4n+4 & 4n+7 & 4n+9 & \cdots  & 8n-3 & 8n-1\\
				1 & 3 & 5  & \cdots & 4n-7 & 4n-5 & 4n-3 & 4n-2 & 4n+1 & 4n+2 & 4n+5 & 4n+7 & 4n+9 & \cdots  & 8n-3 & 8n-1
			\end{ytableau}.}
		
		\normalsize{Similarly,} by using Theorem \ref{thm2.6}, Remark \ref{rmk2.7}, and Remark\ref{rmk2.8} we have
		
		\tiny{\ytableausetup{boxsize=3.5em} \ytableausetup{centertableaux}
			\begin{ytableau}
				2 & 4 & 6 & \cdots & 4n-6 & 4n-3 & 4n-1 & 4n & 4n+3 & 4n+5 & 4n+6 & 4n+8 & 4n+10 & \cdots & 8n-2 & 8n\\
				2 & 4 & 6 & \cdots & 4n-6 & 4n-4 & 4n+1 & 4n+2 & 4n+3 & 4n+4 & 4n+6 & 4n+8 & 4n+10 & \cdots & 8n-2 & 8n 
			\end{ytableau}\\
			$=$ \hspace{.15cm}\begin{ytableau}
				2 & 4 & 6 & \cdots & 4n-6 & 4n-4 & 4n-1 & 4n & 4n+3 & 4n+4 & 4n+6 & 4n+8 & 4n+10 & \cdots & 8n-2 & 8n\\
				2 & 4 & 6 & \cdots & 4n-6 & 4n-3 & 4n+1 & 4n+2 & 4n+3 & 4n+5 & 4n+6 & 4n+8 & 4n+10 & \cdots & 8n-2 & 8n 
			\end{ytableau}\\
			$+$  \hspace{.15cm}\begin{ytableau}
				2 & 4 & 6 & \cdots & 4n-6 & 4n-4 & 4n-3 & 4n-1 & 4n+1 & 4n+3 & 4n+6 & 4n+8 & 4n+10 & \cdots & 8n-2 & 8n \\
				2 & 4 & 6 & \cdots & 4n-6 & 4n & 4n+2 & 4n+3 & 4n+4 & 4n+5 & 4n+6 & 4n+8 & 4n+10 & \cdots & 8n-2 & 8n
			\end{ytableau}\\
			$-$  \hspace{.15cm}\begin{ytableau}
				2 & 4 & 6 & \cdots & 4n-6 & 4n-4 & 4n-3 & 4n & 4n+1 & 4n+3 & 4n+6 & 4n+8 & 4n+10 & \cdots & 8n-2 & 8n \\
				2 & 4 & 6 & \cdots & 4n-6 & 4n-1 & 4n+1 & 4n+3 & 4n+4 & 4n+5 & 4n+6 & 4n+8 & 4n+10 & \cdots & 8n-2 & 8n
			\end{ytableau}.}
		
		\normalsize{Since} we are working inside $X(w_{6})$, the above straightening law becomes
		
		\tiny{\ytableausetup{boxsize=3.5em} \ytableausetup{centertableaux}
			\begin{ytableau}
				2 & 4 & 6 & \cdots & 4n-6 & 4n-3 & 4n-1 & 4n & 4n+3 & 4n+5 & 4n+6 & 4n+8 & 4n+10 & \cdots & 8n-2 & 8n\\
				2 & 4 & 6 & \cdots & 4n-6 & 4n-4 & 4n+1 & 4n+2 & 4n+3 & 4n+4 & 4n+6 & 4n+8 & 4n+10 & \cdots & 8n-2 & 8n 
			\end{ytableau}\\
			$=$ \hspace{.15cm}\begin{ytableau}
				2 & 4 & 6 & \cdots & 4n-6 & 4n-4 & 4n-1 & 4n & 4n+3 & 4n+4 & 4n+6 & 4n+8 & 4n+10 & \cdots & 8n-2 & 8n\\
				2 & 4 & 6 & \cdots & 4n-6 & 4n-3 & 4n+1 & 4n+2 & 4n+3 & 4n+5 & 4n+6 & 4n+8 & 4n+10 & \cdots & 8n-2 & 8n 
		\end{ytableau}}
		
		\normalsize{Therefore,} by using the above straightening laws we have
		
		$X_4X_5=$
		
		\tiny{\ytableausetup{boxsize=3.5em} \ytableausetup{centertableaux}
			\hspace{.2cm}\begin{ytableau}
				1 & 3 & 5  & \cdots & 4n-7 & 4n-5 & 4n-4 & 4n-3 & 4n-2 & 4n-1 & 4n+1 & 4n+7 & 4n+9 & \cdots  & 8n-3 & 8n-1\\
				1 & 3 & 5  & \cdots & 4n-7 & 4n-5 & 4n-2 & 4n & 4n+2 & 4n+4 & 4n+5 & 4n+7 & 4n+9 & \cdots  & 8n-3 & 8n-1\\
				2 & 4 & 6 & \cdots & 4n-6 & 4n-4 & 4n-1 & 4n & 4n+3 & 4n+4 & 4n+6 & 4n+8 & 4n+10 & \cdots & 8n-2 & 8n\\
				2 & 4 & 6 & \cdots & 4n-6 & 4n-3 & 4n+1 & 4n+2 & 4n+3 & 4n+5 & 4n+6 & 4n+8 & 4n+10 & \cdots & 8n-2 & 8n 
			\end{ytableau}\\
			$-$~\hspace{.15cm}\hspace{.15cm}\begin{ytableau}
				1 & 3 & 5  & \cdots & 4n-7 & 4n-5 & 4n-4 & 4n-3 & 4n-2 & 4n & 4n+2 & 4n+7 & 4n+9 & \cdots  & 8n-3 & 8n-1\\
				1 & 3 & 5  & \cdots & 4n-7 & 4n-5 & 4n-2 & 4n-1 & 4n+1 & 4n+4 & 4n+5 & 4n+7 & 4n+9 & \cdots  & 8n-3 & 8n-1\\
				2 & 4 & 6 & \cdots & 4n-6 & 4n-4 & 4n-1 & 4n & 4n+3 & 4n+4 & 4n+6 & 4n+8 & 4n+10 & \cdots & 8n-2 & 8n\\
				2 & 4 & 6 & \cdots & 4n-6 & 4n-3 & 4n+1 & 4n+2 & 4n+3 & 4n+5 & 4n+6 & 4n+8 & 4n+10 & \cdots & 8n-2 & 8n 
			\end{ytableau}\\
			$+$~\hspace{.15cm}\hspace{.15cm}\begin{ytableau}
				1 & 3 & 5  & \cdots & 4n-7 & 4n-5 & 4n-4 & 4n-2 & 4n-1 & 4n & 4n+4 & 4n+7 & 4n+9 & \cdots  & 8n-3 & 8n-1\\
				1 & 3 & 5  & \cdots & 4n-7 & 4n-5 & 4n-3 & 4n-2 & 4n+1 & 4n+2 & 4n+5 & 4n+7 & 4n+9 & \cdots  & 8n-3 & 8n-1\\
				2 & 4 & 6 & \cdots & 4n-6 & 4n-4 & 4n-1 & 4n & 4n+3 & 4n+4 & 4n+6 & 4n+8 & 4n+10 & \cdots & 8n-2 & 8n\\
				2 & 4 & 6 & \cdots & 4n-6 & 4n-3 & 4n+1 & 4n+2 & 4n+3 & 4n+5 & 4n+6 & 4n+8 & 4n+10 & \cdots & 8n-2 & 8n 
		\end{ytableau}}
		
		\normalsize{$=Y_1-Y_2+X_3X_6.$}
		
		Proof of (ii): Note that 
		
		$X_2X_5=$
		
		\tiny{\ytableausetup{boxsize=3.5em}  \ytableausetup{centertableaux}
			\begin{ytableau}
				1 & 3 & 5  & \cdots & 4n-7 & 4n-5 & 4n-4 & 4n-1 & 4n+1 & 4n+3 & 4n+4 & 4n+7 & 4n+9 & \cdots & 8n-3 & 8n-1\\
				1 & 3 & 5  & \cdots & 4n-7 & 4n-5 & 4n-3 & 4n-2 & 4n-1 & 4n & 4n+5 & 4n+7 & 4n+9 & \cdots  & 8n-3 & 8n-1\\
				2 & 4 & 6 & \cdots & 4n-6 & 4n-3 & 4n-2 & 4n & 4n+2 & 4n+5 & 4n+6 & 4n+8 & 4n+10 & \cdots & 8n-2 & 8n\\
				2 & 4 & 6 & \cdots & 4n-6 & 4n-4 & 4n+1 & 4n+2 & 4n+3 & 4n+4 & 4n+6 & 4n+8 & 4n+10 & \cdots & 8n-2 & 8n 
		\end{ytableau}}
		
		\normalsize{By using Theorem \ref{thm2.6} and Remark \ref{rmk2.8}, we have the following straightening laws in $G/P^{\alpha_{4n}}$}
		
		\tiny{\ytableausetup{boxsize=3.5em}  \ytableausetup{centertableaux}
			\begin{ytableau}
				1 & 3 & 5  & \cdots & 4n-7 & 4n-5 & 4n-4 & 4n-1 & 4n+1 & 4n+3 & 4n+4 & 4n+7 & 4n+9 & \cdots & 8n-3 & 8n-1\\
				1 & 3 & 5  & \cdots & 4n-7 & 4n-5 & 4n-3 & 4n-2 & 4n-1 & 4n & 4n+5 & 4n+7 & 4n+9 & \cdots  & 8n-3 & 8n-1
			\end{ytableau}\\
			$=$\hspace{.15cm} \begin{ytableau}
				1 & 3 & 5  & \cdots & 4n-7 & 4n-5 & 4n-4 & 4n-2 & 4n-1 & 4n & 4n+4 & 4n+7 & 4n+9 & \cdots & 8n-3 & 8n-1\\
				1 & 3 & 5  & \cdots & 4n-7 & 4n-5 & 4n-3 & 4n-1 & 4n+1 & 4n+3 & 4n+5 & 4n+7 & 4n+9 & \cdots  & 8n-3 & 8n-1
			\end{ytableau}\\
			$-$\hspace{.15cm} \begin{ytableau}
				1 & 3 & 5  & \cdots & 4n-7 & 4n-5 & 4n-4 & 4n-3 & 4n-1 & 4n & 4n+3 & 4n+7 & 4n+9 & \cdots & 8n-3 & 8n-1\\
				1 & 3 & 5  & \cdots & 4n-7 & 4n-5 & 4n-2 & 4n-1 & 4n+1 & 4n+4 & 4n+5 & 4n+7 & 4n+9 & \cdots  & 8n-3 & 8n-1
			\end{ytableau}\\
			$+$\hspace{.15cm} \begin{ytableau}
				1 & 3 & 5  & \cdots & 4n-7 & 4n-5 & 4n-4 & 4n-3 & 4n-2 & 4n-1 & 4n+1 & 4n+7 & 4n+9 & \cdots & 8n-3 & 8n-1\\
				1 & 3 & 5  & \cdots & 4n-7 & 4n-5 & 4n-1 & 4n & 4n+3 & 4n+4 & 4n+5 & 4n+7 & 4n+9 & \cdots  & 8n-3 & 8n-1
		\end{ytableau}}
		
		\normalsize{and}
		
		\tiny{\ytableausetup{boxsize=3.5em}  \ytableausetup{centertableaux}
			\begin{ytableau}
				2 & 4 & 6 & \cdots & 4n-6 & 4n-3 & 4n-2 & 4n & 4n+2 & 4n+5 & 4n+6 & 4n+8 & 4n+10 & \cdots & 8n-2 & 8n\\
				2 & 4 & 6 & \cdots & 4n-6 & 4n-4 & 4n+1 & 4n+2 & 4n+3 & 4n+4 & 4n+6 & 4n+8 & 4n+10 & \cdots & 8n-2 & 8n
			\end{ytableau}\\
			$=$\hspace{.15cm} \begin{ytableau}
				2 & 4 & 6 & \cdots & 4n-6 & 4n-4 & 4n-3 & 4n-2 & 4n+1 & 4n+2 & 4n+6 & 4n+8 & 4n+10 & \cdots & 8n-2 & 8n\\
				2 & 4 & 6 & \cdots & 4n-6 & 4n & 4n+2 & 4n+3 & 4n+4 & 4n+5 & 4n+6 & 4n+8 & 4n+10 & \cdots & 8n-2 & 8n
			\end{ytableau}\\
			$-$\hspace{.15cm} \begin{ytableau}
				2 & 4 & 6 & \cdots & 4n-6 & 4n-4 & 4n-3 & 4n & 4n+2 & 4n+3 & 4n+6 & 4n+8 & 4n+10 & \cdots & 8n-2 & 8n\\
				2 & 4 & 6 & \cdots & 4n-6 & 4n-2 & 4n+1 & 4n+2 & 4n+4 & 4n+5 & 4n+6 & 4n+8 & 4n+10 & \cdots & 8n-2 & 8n
			\end{ytableau}\\
			$+$\hspace{.15cm} \begin{ytableau}
				2 & 4 & 6 & \cdots & 4n-6 & 4n-4 & 4n-2 & 4n & 4n+2 & 4n+4 & 4n+6 & 4n+8 & 4n+10 & \cdots & 8n-2 & 8n\\
				2 & 4 & 6 & \cdots & 4n-6 & 4n-3 & 4n+1 & 4n+2 & 4n+3 & 4n+5 & 4n+6 & 4n+8 & 4n+10 & \cdots & 8n-2 & 8n
		\end{ytableau}}
		
		\normalsize{Since} we are working in $X(w_6),$ the above straightening law becomes
		
		\tiny{\ytableausetup{boxsize=3.5em}  \ytableausetup{centertableaux}
			\begin{ytableau}
				2 & 4 & 6 & \cdots & 4n-6 & 4n-3 & 4n-2 & 4n & 4n+2 & 4n+5 & 4n+6 & 4n+8 & 4n+10 & \cdots & 8n-2 & 8n\\
				2 & 4 & 6 & \cdots & 4n-6 & 4n-4 & 4n+1 & 4n+2 & 4n+3 & 4n+4 & 4n+6 & 4n+8 & 4n+10 & \cdots & 8n-2 & 8n
			\end{ytableau}\\
			$=$ \hspace{.1cm} \begin{ytableau}
				2 & 4 & 6 & \cdots & 4n-6 & 4n-4 & 4n-2 & 4n & 4n+2 & 4n+4 & 4n+6 & 4n+8 & 4n+10 & \cdots & 8n-2 & 8n\\
				2 & 4 & 6 & \cdots & 4n-6 & 4n-3 & 4n+1 & 4n+2 & 4n+3 & 4n+5 & 4n+6 & 4n+8 & 4n+10 & \cdots & 8n-2 & 8n
		\end{ytableau}}
		
		\normalsize{Hence,} we have
		
		$X_2X_5=$
		
		\tiny{\ytableausetup{boxsize=3.5em}  \ytableausetup{centertableaux}
			\begin{ytableau}
				1 & 3 & 5  & \cdots & 4n-7 & 4n-5 & 4n-4 & 4n-2 & 4n-1 & 4n & 4n+4 & 4n+7 & 4n+9 & \cdots & 8n-3 & 8n-1\\
				1 & 3 & 5  & \cdots & 4n-7 & 4n-5 & 4n-3 & 4n-1 & 4n+1 & 4n+3 & 4n+5 & 4n+7 & 4n+9 & \cdots  & 8n-3 & 8n-1\\
				2 & 4 & 6 & \cdots & 4n-6 & 4n-4 & 4n-2 & 4n & 4n+2 & 4n+4 & 4n+6 & 4n+8 & 4n+10 & \cdots & 8n-2 & 8n\\
				2 & 4 & 6 & \cdots & 4n-6 & 4n-3 & 4n+1 & 4n+2 & 4n+3 & 4n+5 & 4n+6 & 4n+8 & 4n+10 & \cdots & 8n-2 & 8n
			\end{ytableau}\\
			$-$\hspace{.15cm} \begin{ytableau}
				1 & 3 & 5  & \cdots & 4n-7 & 4n-5 & 4n-4 & 4n-3 & 4n-1 & 4n & 4n+3 & 4n+7 & 4n+9 & \cdots & 8n-3 & 8n-1\\
				1 & 3 & 5  & \cdots & 4n-7 & 4n-5 & 4n-2 & 4n-1 & 4n+1 & 4n+4 & 4n+5 & 4n+7 & 4n+9 & \cdots  & 8n-3 & 8n-1\\
				2 & 4 & 6 & \cdots & 4n-6 & 4n-4 & 4n-2 & 4n & 4n+2 & 4n+4 & 4n+6 & 4n+8 & 4n+10 & \cdots & 8n-2 & 8n\\
				2 & 4 & 6 & \cdots & 4n-6 & 4n-3 & 4n+1 & 4n+2 & 4n+3 & 4n+5 & 4n+6 & 4n+8 & 4n+10 & \cdots & 8n-2 & 8n
			\end{ytableau}\\
			$+$\hspace{.15cm} \begin{ytableau}
				1 & 3 & 5  & \cdots & 4n-7 & 4n-5 & 4n-4 & 4n-3 & 4n-2 & 4n-1 & 4n+1 & 4n+7 & 4n+9 & \cdots & 8n-3 & 8n-1\\
				1 & 3 & 5  & \cdots & 4n-7 & 4n-5 & 4n-1 & 4n & 4n+3 & 4n+4 & 4n+5 & 4n+7 & 4n+9 & \cdots  & 8n-3 & 8n-1\\
				2 & 4 & 6 & \cdots & 4n-6 & 4n-4 & 4n-2 & 4n & 4n+2 & 4n+4 & 4n+6 & 4n+8 & 4n+10 & \cdots & 8n-2 & 8n\\
				2 & 4 & 6 & \cdots & 4n-6 & 4n-3 & 4n+1 & 4n+2 & 4n+3 & 4n+5 & 4n+6 & 4n+8 & 4n+10 & \cdots & 8n-2 & 8n
		\end{ytableau}}
		
		\normalsize{Since} we are working in $X(w_6),$ we have 
		
		\tiny{\ytableausetup{boxsize=3.5em}  \ytableausetup{centertableaux}
			\begin{ytableau}
				1 & 3 & 5  & \cdots & 4n-7 & 4n-5 & 4n-1 & 4n & 4n+3 & 4n+4 & 4n+5 & 4n+7 & 4n+9 & \cdots  & 8n-3 & 8n-1\\
				2 & 4 & 6 & \cdots & 4n-6 & 4n-4 & 4n-2 & 4n & 4n+2 & 4n+4 & 4n+6 & 4n+8 & 4n+10 & \cdots & 8n-2 & 8n
			\end{ytableau}\\
			$=$\hspace{.15cm} \begin{ytableau}
				1 & 3 & 5  & \cdots & 4n-7 & 4n-5 & 4n-2 & 4n & 4n+2 & 4n+4 & 4n+5 & 4n+7 & 4n+9 & \cdots  & 8n-3 & 8n-1\\
				2 & 4 & 6 & \cdots & 4n-6 & 4n-4 & 4n-1 & 4n & 4n+3 & 4n+4 & 4n+6 & 4n+8 & 4n+10 & \cdots & 8n-2 & 8n
		\end{ytableau}}
		
		\normalsize{Hence,} we have
		
		$X_2X_5=$
		
		\tiny{\ytableausetup{boxsize=3.5em}  \ytableausetup{centertableaux}
			\begin{ytableau}
				1 & 3 & 5  & \cdots & 4n-7 & 4n-5 & 4n-4 & 4n-2 & 4n-1 & 4n & 4n+4 & 4n+7 & 4n+9 & \cdots & 8n-3 & 8n-1\\
				1 & 3 & 5  & \cdots & 4n-7 & 4n-5 & 4n-3 & 4n-1 & 4n+1 & 4n+3 & 4n+5 & 4n+7 & 4n+9 & \cdots  & 8n-3 & 8n-1\\
				2 & 4 & 6 & \cdots & 4n-6 & 4n-4 & 4n-2 & 4n & 4n+2 & 4n+4 & 4n+6 & 4n+8 & 4n+10 & \cdots & 8n-2 & 8n\\
				2 & 4 & 6 & \cdots & 4n-6 & 4n-3 & 4n+1 & 4n+2 & 4n+3 & 4n+5 & 4n+6 & 4n+8 & 4n+10 & \cdots & 8n-2 & 8n
			\end{ytableau}\\
			$-$\hspace{.15cm} \begin{ytableau}
				1 & 3 & 5  & \cdots & 4n-7 & 4n-5 & 4n-4 & 4n-3 & 4n-1 & 4n & 4n+3 & 4n+7 & 4n+9 & \cdots & 8n-3 & 8n-1\\
				1 & 3 & 5  & \cdots & 4n-7 & 4n-5 & 4n-2 & 4n-1 & 4n+1 & 4n+4 & 4n+5 & 4n+7 & 4n+9 & \cdots  & 8n-3 & 8n-1\\
				2 & 4 & 6 & \cdots & 4n-6 & 4n-4 & 4n-2 & 4n & 4n+2 & 4n+4 & 4n+6 & 4n+8 & 4n+10 & \cdots & 8n-2 & 8n\\
				2 & 4 & 6 & \cdots & 4n-6 & 4n-3 & 4n+1 & 4n+2 & 4n+3 & 4n+5 & 4n+6 & 4n+8 & 4n+10 & \cdots & 8n-2 & 8n
			\end{ytableau}\\
			$+$\hspace{.15cm} \begin{ytableau}
				1 & 3 & 5  & \cdots & 4n-7 & 4n-5 & 4n-4 & 4n-3 & 4n-2 & 4n-1 & 4n+1 & 4n+7 & 4n+9 & \cdots & 8n-3 & 8n-1\\
				1 & 3 & 5  & \cdots & 4n-7 & 4n-5 & 4n-2 & 4n & 4n+2 & 4n+4 & 4n+5 & 4n+7 & 4n+9 & \cdots  & 8n-3 & 8n-1\\
				2 & 4 & 6 & \cdots & 4n-6 & 4n-4 & 4n-1 & 4n & 4n+3 & 4n+4 & 4n+6 & 4n+8 & 4n+10 & \cdots & 8n-2 & 8n\\
				2 & 4 & 6 & \cdots & 4n-6 & 4n-3 & 4n+1 & 4n+2 & 4n+3 & 4n+5 & 4n+6 & 4n+8 & 4n+10 & \cdots & 8n-2 & 8n
		\end{ytableau}}
		
		\normalsize{$=X_1X_6-Y_3+Y_1.$}
		
		Proof of (iii): Note that
		
		$ X_2X_3=$
		
		\tiny{\ytableausetup{boxsize=3.5em} \ytableausetup{centertableaux}
			\begin{ytableau}
				1 & 3 & 5  & \cdots & 4n-7 & 4n-5 & 4n-4 & 4n-1 & 4n+1 & 4n+3 & 4n+4 & 4n+7 & 4n+9 & \cdots & 8n-3 & 8n-1\\
				1 & 3 & 5  & \cdots & 4n-7 & 4n-5 & 4n-3 & 4n-2 & 4n+1 & 4n+2 & 4n+5 & 4n+7 & 4n+9 & \cdots  & 8n-3 & 8n-1\\
				2 & 4 & 6 & \cdots & 4n-6 & 4n-3 & 4n-2 & 4n & 4n+2 & 4n+5 & 4n+6 & 4n+8 & 4n+10 & \cdots & 8n-2 & 8n\\
				2 & 4 & 6 & \cdots & 4n-6 & 4n-4 & 4n-1 & 4n & 4n+3 & 4n+4 & 4n+6 & 4n+8 & 4n+10 & \cdots & 8n-2 & 8n 
		\end{ytableau} }
		
		\normalsize{By using Theorem \ref{thm2.6} and Remark \ref{rmk2.8}, we have the following straightening laws in $G/P^{\alpha_{4n}}$}
		
		\tiny{\ytableausetup{boxsize=3.5em} \ytableausetup{centertableaux}
			\begin{ytableau}
				1 & 3 & 5  & \cdots & 4n-7 & 4n-5 & 4n-4 & 4n-1 & 4n+1 & 4n+3 & 4n+4 & 4n+7 & 4n+9 & \cdots & 8n-3 & 8n-1\\
				1 & 3 & 5  & \cdots & 4n-7 & 4n-5 & 4n-3 & 4n-2 & 4n+1 & 4n+2 & 4n+5 & 4n+7 & 4n+9 & \cdots  & 8n-3 & 8n-1
			\end{ytableau}\\
			$=$\hspace{.15cm} \begin{ytableau}
				1 & 3 & 5  & \cdots & 4n-7 & 4n-5 & 4n-4 & 4n-2 & 4n+1 & 4n+2 & 4n+4 & 4n+7 & 4n+9 & \cdots & 8n-3 & 8n-1\\
				1 & 3 & 5  & \cdots & 4n-7 & 4n-5 & 4n-3 & 4n-1 & 4n+1 & 4n+3 & 4n+5 & 4n+7 & 4n+9 & \cdots  & 8n-3 & 8n-1
			\end{ytableau}\\
			$-$\hspace{.15cm} \begin{ytableau}
				1 & 3 & 5  & \cdots & 4n-7 & 4n-5 & 4n-4 & 4n-3 & 4n+1 & 4n+3 & 4n+4 & 4n+7 & 4n+9 & \cdots & 8n-3 & 8n-1\\
				1 & 3 & 5  & \cdots & 4n-7 & 4n-5 & 4n-2 & 4n-1 & 4n+1 & 4n+4 & 4n+5 & 4n+7 & 4n+9 & \cdots  & 8n-3 & 8n-1
			\end{ytableau}\\
			$+$\hspace{.15cm} \begin{ytableau}
				1 & 3 & 5  & \cdots & 4n-7 & 4n-5 & 4n-4 & 4n-3 & 4n-2 & 4n-1 & 4n+1 & 4n+7 & 4n+9 & \cdots & 8n-3 & 8n-1\\
				1 & 3 & 5  & \cdots & 4n-7 & 4n-5 & 4n+1 & 4n+2 & 4n+3 & 4n+4 & 4n+5 & 4n+7 & 4n+9 & \cdots  & 8n-3 & 8n-1
			\end{ytableau}.}
		
		\normalsize{and}
		
		\tiny{\ytableausetup{boxsize=3.5em} \ytableausetup{centertableaux}
			\begin{ytableau}
				2 & 4 & 6 & \cdots & 4n-6 & 4n-3 & 4n-2 & 4n & 4n+2 & 4n+5 & 4n+6 & 4n+8 & 4n+10 & \cdots & 8n-2 & 8n\\
				2 & 4 & 6 & \cdots & 4n-6 & 4n-4 & 4n-1 & 4n & 4n+3 & 4n+4 & 4n+6 & 4n+8 & 4n+10 & \cdots & 8n-2 & 8n 
			\end{ytableau} \\
			$=$ \hspace{.15cm}\begin{ytableau}
				2 & 4 & 6 & \cdots & 4n-6 & 4n-4 & 4n-2 & 4n & 4n+2 & 4n+4 & 4n+6 & 4n+8 & 4n+10 & \cdots & 8n-2 & 8n \\
				2 & 4 & 6 & \cdots & 4n-6 & 4n-3 & 4n-1 & 4n & 4n+3 & 4n+5 & 4n+6 & 4n+8 & 4n+10 & \cdots & 8n-2 & 8n
			\end{ytableau}\\
			$-$ \hspace{.15cm}\begin{ytableau}
				2 & 4 & 6 & \cdots & 4n-6 & 4n-4 & 4n-3 & 4n & 4n+2 & 4n+3 & 4n+6 & 4n+8 & 4n+10 & \cdots & 8n-2 & 8n\\
				2 & 4 & 6 & \cdots & 4n-6 & 4n-2 & 4n-1 & 4n & 4n+4 & 4n+5 & 4n+6 & 4n+8 & 4n+10 & \cdots & 8n-2 & 8n 
			\end{ytableau}\\
			$+$ \hspace{.15cm}\begin{ytableau}
				2 & 4 & 6 & \cdots & 4n-6 & 4n-4 & 4n-3 & 4n-2 & 4n-1 & 4n & 4n+6 & 4n+8 & 4n+10 & \cdots & 8n-2 & 8n\\
				2 & 4 & 6 & \cdots & 4n-6 & 4n & 4n+2 & 4n+3 & 4n+4 & 4n+5 & 4n+6 & 4n+8 & 4n+10 & \cdots & 8n-2 & 8n 
		\end{ytableau}}
		
		\normalsize{Since} we are working in $X(w_{6}),$ the above straightening law becomes
		
		\tiny{\ytableausetup{boxsize=3.5em} \ytableausetup{centertableaux}
			\begin{ytableau}
				2 & 4 & 6 & \cdots & 4n-6 & 4n-3 & 4n-2 & 4n & 4n+2 & 4n+5 & 4n+6 & 4n+8 & 4n+10 & \cdots & 8n-2 & 8n\\
				2 & 4 & 6 & \cdots & 4n-6 & 4n-4 & 4n-1 & 4n & 4n+3 & 4n+4 & 4n+6 & 4n+8 & 4n+10 & \cdots & 8n-2 & 8n 
			\end{ytableau} \\
			$=$ \hspace{.15cm}\begin{ytableau}
				2 & 4 & 6 & \cdots & 4n-6 & 4n-4 & 4n-2 & 4n & 4n+2 & 4n+4 & 4n+6 & 4n+8 & 4n+10 & \cdots & 8n-2 & 8n \\
				2 & 4 & 6 & \cdots & 4n-6 & 4n-3 & 4n-1 & 4n & 4n+3 & 4n+5 & 4n+6 & 4n+8 & 4n+10 & \cdots & 8n-2 & 8n
		\end{ytableau}}
		
		\normalsize{Hence,} $X_2X_3=$
		
		\tiny{\ytableausetup{boxsize=3.5em} \ytableausetup{centertableaux}
			\begin{ytableau}
				1 & 3 & 5  & \cdots & 4n-7 & 4n-5 & 4n-4 & 4n-2 & 4n+1 & 4n+2 & 4n+4 & 4n+7 & 4n+9 & \cdots & 8n-3 & 8n-1\\
				1 & 3 & 5  & \cdots & 4n-7 & 4n-5 & 4n-3 & 4n-1 & 4n+1 & 4n+3 & 4n+5 & 4n+7 & 4n+9 & \cdots  & 8n-3 & 8n-1\\
				2 & 4 & 6 & \cdots & 4n-6 & 4n-4 & 4n-2 & 4n & 4n+2 & 4n+4 & 4n+6 & 4n+8 & 4n+10 & \cdots & 8n-2 & 8n \\
				2 & 4 & 6 & \cdots & 4n-6 & 4n-3 & 4n-1 & 4n & 4n+3 & 4n+5 & 4n+6 & 4n+8 & 4n+10 & \cdots & 8n-2 & 8n
			\end{ytableau}\\
			$-$\hspace{.15cm} \begin{ytableau}
				1 & 3 & 5  & \cdots & 4n-7 & 4n-5 & 4n-4 & 4n-3 & 4n+1 & 4n+3 & 4n+4 & 4n+7 & 4n+9 & \cdots & 8n-3 & 8n-1\\
				1 & 3 & 5  & \cdots & 4n-7 & 4n-5 & 4n-2 & 4n-1 & 4n+1 & 4n+4 & 4n+5 & 4n+7 & 4n+9 & \cdots  & 8n-3 & 8n-1\\
				2 & 4 & 6 & \cdots & 4n-6 & 4n-4 & 4n-2 & 4n & 4n+2 & 4n+4 & 4n+6 & 4n+8 & 4n+10 & \cdots & 8n-2 & 8n \\
				2 & 4 & 6 & \cdots & 4n-6 & 4n-3 & 4n-1 & 4n & 4n+3 & 4n+5 & 4n+6 & 4n+8 & 4n+10 & \cdots & 8n-2 & 8n
			\end{ytableau}\\
			$+$\hspace{.15cm} \begin{ytableau}
				1 & 3 & 5  & \cdots & 4n-7 & 4n-5 & 4n-4 & 4n-3 & 4n-2 & 4n-1 & 4n+1 & 4n+7 & 4n+9 & \cdots & 8n-3 & 8n-1\\
				1 & 3 & 5  & \cdots & 4n-7 & 4n-5 & 4n+1 & 4n+2 & 4n+3 & 4n+4 & 4n+5 & 4n+7 & 4n+9 & \cdots  & 8n-3 & 8n-1\\
				2 & 4 & 6 & \cdots & 4n-6 & 4n-4 & 4n-2 & 4n & 4n+2 & 4n+4 & 4n+6 & 4n+8 & 4n+10 & \cdots & 8n-2 & 8n \\
				2 & 4 & 6 & \cdots & 4n-6 & 4n-3 & 4n-1 & 4n & 4n+3 & 4n+5 & 4n+6 & 4n+8 & 4n+10 & \cdots & 8n-2 & 8n
			\end{ytableau}.}
		
		\normalsize{Since} we are working in $X(w_{6}),$ \normalsize{by using Theorem \ref{thm2.6} and Remark \ref{rmk2.8}, we have }
		
		\tiny{\ytableausetup{boxsize=3.5em} \ytableausetup{centertableaux}
			\begin{ytableau}
				1 & 3 & 5  & \cdots & 4n-7 & 4n-5 & 4n+1 & 4n+2 & 4n+3 & 4n+4 & 4n+5 & 4n+7 & 4n+9 & \cdots  & 8n-3 & 8n-1\\
				2 & 4 & 6 & \cdots & 4n-6 & 4n-3 & 4n-1 & 4n & 4n+3 & 4n+5 & 4n+6 & 4n+8 & 4n+10 & \cdots & 8n-2 & 8n
			\end{ytableau}\\
			$=$  \hspace{.1cm} \begin{ytableau}
				1 & 3 & 5  & \cdots & 4n-7 & 4n-5 & 4n-1 & 4n & 4n+3 & 4n+4 & 4n+5 & 4n+7 & 4n+9 & \cdots  & 8n-3 & 8n-1\\
				2 & 4 & 6 & \cdots & 4n-6 & 4n-3 & 4n+1 & 4n+2 & 4n+3 & 4n+5 & 4n+6 & 4n+8 & 4n+10 & \cdots & 8n-2 & 8n 
		\end{ytableau}}
		
		\normalsize{Hence,} we have
		
		\tiny{\ytableausetup{boxsize=3.5em} \ytableausetup{centertableaux}
			\begin{ytableau}
				1 & 3 & 5  & \cdots & 4n-7 & 4n-5 & 4n-4 & 4n-3 & 4n-2 & 4n-1 & 4n+1 & 4n+7 & 4n+9 & \cdots & 8n-3 & 8n-1\\
				1 & 3 & 5  & \cdots & 4n-7 & 4n-5 & 4n+1 & 4n+2 & 4n+3 & 4n+4 & 4n+5 & 4n+7 & 4n+9 & \cdots  & 8n-3 & 8n-1\\
				2 & 4 & 6 & \cdots & 4n-6 & 4n-4 & 4n-2 & 4n & 4n+2 & 4n+4 & 4n+6 & 4n+8 & 4n+10 & \cdots & 8n-2 & 8n \\
				2 & 4 & 6 & \cdots & 4n-6 & 4n-3 & 4n-1 & 4n & 4n+3 & 4n+5 & 4n+6 & 4n+8 & 4n+10 & \cdots & 8n-2 & 8n
			\end{ytableau}\\
			$=$ \hspace{.15cm}\begin{ytableau}
				1 & 3 & 5  & \cdots & 4n-7 & 4n-5 & 4n-4 & 4n-3 & 4n-2 & 4n-1 & 4n+1 & 4n+7 & 4n+9 & \cdots & 8n-3 & 8n-1\\
				1 & 3 & 5  & \cdots & 4n-7 & 4n-5 & 4n-1 & 4n & 4n+3 & 4n+4 & 4n+5 & 4n+7 & 4n+9 & \cdots  & 8n-3 & 8n-1\\
				2 & 4 & 6 & \cdots & 4n-6 & 4n-4 & 4n-2 & 4n & 4n+2 & 4n+4 & 4n+6 & 4n+8 & 4n+10 & \cdots & 8n-2 & 8n \\
				2 & 4 & 6 & \cdots & 4n-6 & 4n-3 & 4n+1 & 4n+2 & 4n+3 & 4n+5 & 4n+6 & 4n+8 & 4n+10 & \cdots & 8n-2 & 8n 
		\end{ytableau}}
		
		\normalsize{Since} we are working in $X(w_{6}),$ \normalsize{by using Theorem \ref{thm2.6} and Remark \ref{rmk2.8}, we have } 
		
		\tiny{\ytableausetup{boxsize=3.5em} \ytableausetup{centertableaux}
			\begin{ytableau}
				1 & 3 & 5  & \cdots & 4n-7 & 4n-5 & 4n-1 & 4n & 4n+3 & 4n+4 & 4n+5 & 4n+7 & 4n+9 & \cdots  & 8n-3 & 8n-1\\
				2 & 4 & 6 & \cdots & 4n-6 & 4n-4 & 4n-2 & 4n & 4n+2 & 4n+4 & 4n+6 & 4n+8 & 4n+10 & \cdots & 8n-2 & 8n
			\end{ytableau}\\
			$=$  \hspace{.1cm} \begin{ytableau}
				1 & 3 & 5  & \cdots & 4n-7 & 4n-5 & 4n-2 & 4n & 4n+2 & 4n+4 & 4n+5 & 4n+7 & 4n+9 & \cdots  & 8n-3 & 8n-1\\
				2 & 4 & 6 & \cdots & 4n-6 & 4n-4 & 4n-1 & 4n & 4n+3 & 4n+4 & 4n+6 & 4n+8 & 4n+10 & \cdots & 8n-2 & 8n
		\end{ytableau}}
		
		\normalsize{Hence,} we have
		
		\tiny{\ytableausetup{boxsize=3.5em} \ytableausetup{centertableaux}
			\begin{ytableau}
				1 & 3 & 5  & \cdots & 4n-7 & 4n-5 & 4n-4 & 4n-3 & 4n-2 & 4n-1 & 4n+1 & 4n+7 & 4n+9 & \cdots & 8n-3 & 8n-1\\
				1 & 3 & 5  & \cdots & 4n-7 & 4n-5 & 4n+1 & 4n+2 & 4n+3 & 4n+4 & 4n+5 & 4n+7 & 4n+9 & \cdots  & 8n-3 & 8n-1\\
				2 & 4 & 6 & \cdots & 4n-6 & 4n-4 & 4n-2 & 4n & 4n+2 & 4n+4 & 4n+6 & 4n+8 & 4n+10 & \cdots & 8n-2 & 8n \\
				2 & 4 & 6 & \cdots & 4n-6 & 4n-3 & 4n-1 & 4n & 4n+3 & 4n+5 & 4n+6 & 4n+8 & 4n+10 & \cdots & 8n-2 & 8n
			\end{ytableau}\\
			$=$ \hspace{.1cm} \begin{ytableau}
				1 & 3 & 5  & \cdots & 4n-7 & 4n-5 & 4n-4 & 4n-3 & 4n-2 & 4n-1 & 4n+1 & 4n+7 & 4n+9 & \cdots & 8n-3 & 8n-1\\
				1 & 3 & 5  & \cdots & 4n-7 & 4n-5 & 4n-2 & 4n & 4n+2 & 4n+4 & 4n+5 & 4n+7 & 4n+9 & \cdots  & 8n-3 & 8n-1\\
				2 & 4 & 6 & \cdots & 4n-6 & 4n-4 & 4n-1 & 4n & 4n+3 & 4n+4 & 4n+6 & 4n+8 & 4n+10 & \cdots & 8n-2 & 8n\\
				2 & 4 & 6 & \cdots & 4n-6 & 4n-3 & 4n+1 & 4n+2 & 4n+3 & 4n+5 & 4n+6 & 4n+8 & 4n+10 & \cdots & 8n-2 & 8n 
		\end{ytableau}}
		
		\normalsize{Hence,} $X_2X_3=$
		
		\tiny{\ytableausetup{boxsize=3.3em} \ytableausetup{centertableaux}
			\begin{ytableau}
				1 & 3 & 5  & \cdots & 4n-7 & 4n-5 & 4n-4 & 4n-2 & 4n+1 & 4n+2 & 4n+4 & 4n+7 & 4n+9 & \cdots & 8n-3 & 8n-1\\
				1 & 3 & 5  & \cdots & 4n-7 & 4n-5 & 4n-3 & 4n-1 & 4n+1 & 4n+3 & 4n+5 & 4n+7 & 4n+9 & \cdots  & 8n-3 & 8n-1\\
				2 & 4 & 6 & \cdots & 4n-6 & 4n-4 & 4n-2 & 4n & 4n+2 & 4n+4 & 4n+6 & 4n+8 & 4n+10 & \cdots & 8n-2 & 8n \\
				2 & 4 & 6 & \cdots & 4n-6 & 4n-3 & 4n-1 & 4n & 4n+3 & 4n+5 & 4n+6 & 4n+8 & 4n+10 & \cdots & 8n-2 & 8n
			\end{ytableau}\\
			$-$\hspace{.10cm} \begin{ytableau}
				1 & 3 & 5  & \cdots & 4n-7 & 4n-5 & 4n-4 & 4n-3 & 4n+1 & 4n+3 & 4n+4 & 4n+7 & 4n+9 & \cdots & 8n-3 & 8n-1\\
				1 & 3 & 5  & \cdots & 4n-7 & 4n-5 & 4n-2 & 4n-1 & 4n+1 & 4n+4 & 4n+5 & 4n+7 & 4n+9 & \cdots  & 8n-3 & 8n-1\\
				2 & 4 & 6 & \cdots & 4n-6 & 4n-4 & 4n-2 & 4n & 4n+2 & 4n+4 & 4n+6 & 4n+8 & 4n+10 & \cdots & 8n-2 & 8n \\
				2 & 4 & 6 & \cdots & 4n-6 & 4n-3 & 4n-1 & 4n & 4n+3 & 4n+5 & 4n+6 & 4n+8 & 4n+10 & \cdots & 8n-2 & 8n
			\end{ytableau}\\
			$+$\hspace{.10cm} \begin{ytableau}
				1 & 3 & 5  & \cdots & 4n-7 & 4n-5 & 4n-4 & 4n-3 & 4n-2 & 4n-1 & 4n+1 & 4n+7 & 4n+9 & \cdots & 8n-3 & 8n-1\\
				1 & 3 & 5  & \cdots & 4n-7 & 4n-5 & 4n-2 & 4n & 4n+2 & 4n+4 & 4n+5 & 4n+7 & 4n+9 & \cdots  & 8n-3 & 8n-1\\
				2 & 4 & 6 & \cdots & 4n-6 & 4n-4 & 4n-1 & 4n & 4n+3 & 4n+4 & 4n+6 & 4n+8 & 4n+10 & \cdots & 8n-2 & 8n\\
				2 & 4 & 6 & \cdots & 4n-6 & 4n-3 & 4n+1 & 4n+2 & 4n+3 & 4n+5 & 4n+6 & 4n+8 & 4n+10 & \cdots & 8n-2 & 8n 
		\end{ytableau}}

		\normalsize{$=X_1X_4-Y_4+Y_1.$}
	\end{proof}
	
	\begin{lemma}\label{lem5.3}
		The following relations in $X_i$'s $(1 \leq i \leq 6),$ $Y_j$'s ($1 \leq j \leq 4$) hold in $R_2.$
		\begin{itemize}
			\item[(i)] $X_2Y_1-Z_1=0.$
			\item[(ii)] $X_2Y_2-Z_2=0.$
		\end{itemize}
	\end{lemma}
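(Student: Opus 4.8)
The plan is to reduce both identities to a single quadratic straightening relation among the bottom rows, carried out inside the homogeneous coordinate ring of $X_G(w_6)$. I regard each standard monomial as the product of the row symbols $q_{\underline{i}}$ attached to its rows: writing $a_1, a_2$ for the two rows of $X_2$, then $b_1, \ldots, b_4$ for the rows of $Y_1$ and $d_1, \ldots, d_4$ for the rows of $Y_2$ (all read top to bottom), we have $X_2Y_1 = q_{a_1}q_{a_2}q_{b_1}q_{b_2}q_{b_3}q_{b_4}$ and $X_2Y_2 = q_{a_1}q_{a_2}q_{d_1}q_{d_2}q_{d_3}q_{d_4}$. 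A direct column-by-column inspection shows $b_3 = d_3$ and $b_4 = d_4$ as sequences, and that among the six rows of each product every pair is componentwise comparable except the single pair $(a_2, b_3)$. Hence both computations hinge on straightening the one quadratic monomial $q_{a_2}q_{b_3}$.

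To straighten it I use \cref{rmk2.7} and \cref{rmk2.8} to pass to Pfaffians. The rows $a_2$ and $b_3$ correspond to the even-cardinality subsets $S = \{1,3,\ldots,4n-5\}\cup\{4n-4,4n-1\}$ and $S' = \{1,3,\ldots,4n-3\}\cup\{4n-2\}$ of $\{1,\ldots,4n\}$, so that $q_{a_2} = P(S)$, $q_{b_3} = P(S')$ and $S\,\Delta\,S' = \{4n-4,4n-3,4n-2,4n-1\}$. Taking the odd-cardinality subsets $I_1 = S\setminus\{4n-4\}$ and $I_2 = S'\cup\{4n-4\}$, which satisfy $I_1\,\Delta\,I_2 = S\,\Delta\,S'$, \cref{thm2.6} produces a four-term relation $\sum_{\tau=1}^{4}(-1)^{\tau}P(I_1\Delta\{i_\tau\})P(I_2\Delta\{i_\tau\}) = 0$ with $(i_1,i_2,i_3,i_4) = (4n-4,4n-3,4n-2,4n-1)$. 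Converting each factor back to a row via \cref{rmk2.8}, the term $\tau=1$ is $q_{a_2}q_{b_3}$, while the term $\tau=2$ is $q_{u'}q_{v'}$, where $u'$ and $v'$ are exactly the fourth and fifth rows of $Z_1$ (which coincide with the fourth and fifth rows of $Z_2$).

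The crux is to show that the remaining two terms ($\tau = 3, 4$) vanish upon restriction to $X_G(w_6)$. For each such term I would compute the larger of its two rows by \cref{rmk2.8} and verify that it is not $\leq w_6$ in the componentwise order on $I_{4n,8n}$: concretely, the larger row carries the entry $4n-2$ (for $\tau=3$) or $4n$ (for $\tau=4$) in position $2n-2$, whereas $w_6$ has $4n-3$ there, so the associated standard monomial restricts to $0$ on $X_G(w_6)$. This leaves the clean relation $q_{a_2}q_{b_3} = q_{u'}q_{v'}$ in the coordinate ring of $X_G(w_6)$. I expect this vanishing verification to be the main obstacle, since it is the only place the Schubert datum $w_6$ enters and it requires reading off the dual rows of the Pfaffian terms correctly.

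Finally I substitute. In $X_G(w_6)$ this gives $X_2Y_1 = q_{a_1}q_{b_1}q_{b_2}q_{b_4}\,q_{u'}q_{v'}$; the six rows $a_1,b_1,b_2,b_4,u',v'$ are precisely the six rows of the standard tableau $Z_1$, so they are pairwise comparable and their product is the standard monomial $Z_1$, yielding $X_2Y_1 - Z_1 = 0$. Using $b_3 = d_3$, the identical substitution yields $X_2Y_2 = q_{a_1}q_{d_1}q_{d_2}q_{d_4}\,q_{u'}q_{v'}$, whose six rows are exactly those of $Z_2$ (again with $u',v'$ in positions four and five), whence $X_2Y_2 - Z_2 = 0$. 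The only part that differs between (i) and (ii) is the bookkeeping that the four untouched rows reassemble with $u',v'$ into $Z_1$ and $Z_2$ respectively.
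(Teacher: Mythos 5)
Your proposal is correct and follows essentially the same route as the paper: both identify $(a_2,b_3)$ as the unique non-comparable pair of rows in each product, straighten $q_{a_2}q_{b_3}$ via the Pfaffian identity of \cref{thm2.6} together with \cref{rmk2.8}, discard the two terms whose larger row is not $\leq w_6$ (your criterion via the entry in position $2n-2$ checks out), and reassemble the surviving rows into $Z_1$ and $Z_2$. If anything, your version is the more carefully documented one, since you make explicit the choice of $I_1,I_2$ and the vanishing argument that the paper leaves implicit.
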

	\begin{proof}
		Proof of (i): Note that
		$X_2Y_1=$
		
		\tiny{\ytableausetup{boxsize=3.5em} \ytableausetup{centertableaux}
			\begin{ytableau}
				1& 3 & 5  & \cdots & 4n-7 & 4n-5 & 4n-4 & 4n-3 & 4n-2 & 4n-1 & 4n+1 & 4n+7 & 4n+9 & \cdots & 8n-3 & 8n-1\\
				1 & 3 & 5  & \cdots & 4n-7 & 4n-5 & 4n-4 & 4n-1 & 4n+1 & 4n+3 & 4n+4 & 4n+7 & 4n+9 & \cdots & 8n-3 & 8n-1\\
				1& 3 & 5 & \cdots & 4n-7 & 4n-5 & 4n-2 & 4n & 4n+2 & 4n+4 & 4n+5 & 4n+7 & 4n+9 & \cdots & 8n-3 & 8n-1\\
				2 & 4 & 6 & \cdots & 4n-6 & 4n-3 & 4n-2 & 4n & 4n+2 & 4n+5 & 4n+6 & 4n+8 & 4n+10 & \cdots & 8n-2 & 8n\\
				2 & 4 & 6 & \cdots & 4n-6 & 4n-4 & 4n-1 & 4n & 4n+3 & 4n+4 & 4n+6 & 4n+8 & 4n+10 & \cdots & 8n-2 & 8n\\
				2 & 4 & 6 & \cdots & 4n-6 & 4n-3 & 4n+1 & 4n+2 & 4n+3 & 4n+5 & 4n+6 & 4n+8 & 4n+10 & \cdots & 8n-2 & 8n\\
		\end{ytableau}}
		
		\normalsize{Since} we are working in $X(w_{6}),$ \normalsize{by using Theorem \ref{thm2.6} and Remark \ref{rmk2.8}, we have }
		
		\tiny{\ytableausetup{boxsize=3.5em} \ytableausetup{centertableaux}
			\begin{ytableau}
				2 & 4 & 6 & \cdots & 4n-6 & 4n-3 & 4n-2 & 4n & 4n+2 & 4n+5 & 4n+6 & 4n+8 & 4n+10 & \cdots & 8n-2 & 8n\\
				2 & 4 & 6 & \cdots & 4n-6 & 4n-4 & 4n-1 & 4n & 4n+3 & 4n+4 & 4n+6 & 4n+8 & 4n+10 & \cdots & 8n-2 & 8n
			\end{ytableau}\\
			$=$  \hspace{.1cm} \begin{ytableau}
				2 & 4 & 6 & \cdots & 4n-6 & 4n-4 & 4n-2 & 4n & 4n+2 & 4n+5 & 4n+6 & 4n+8 & 4n+10 & \cdots & 8n-2 & 8n\\
				2 & 4 & 6 & \cdots & 4n-6 & 4n-3 & 4n-1 & 4n & 4n+3 & 4n+4 & 4n+6 & 4n+8 & 4n+10 & \cdots & 8n-2 & 8n
		\end{ytableau}}
		
		\normalsize{Hence,} we have
		
		$X_2Y_1=$
		
		\tiny{\ytableausetup{boxsize=3.5em} \ytableausetup{centertableaux}
			\begin{ytableau}
				1& 3 & 5  & \cdots & 4n-7 & 4n-5 & 4n-4 & 4n-3 & 4n-2 & 4n-1 & 4n+1 & 4n+7 & 4n+9 & \cdots & 8n-3 & 8n-1\\
				1 & 3 & 5  & \cdots & 4n-7 & 4n-5 & 4n-4 & 4n-1 & 4n+1 & 4n+3 & 4n+4 & 4n+7 & 4n+9 & \cdots & 8n-3 & 8n-1\\
				1& 3 & 5 & \cdots & 4n-7 & 4n-5 & 4n-2 & 4n & 4n+2 & 4n+4 & 4n+5 & 4n+7 & 4n+9 & \cdots & 8n-3 & 8n-1\\
				2 & 4 & 6 & \cdots & 4n-6 & 4n-4 & 4n-2 & 4n & 4n+2 & 4n+5 & 4n+6 & 4n+8 & 4n+10 & \cdots & 8n-2 & 8n\\
				2 & 4 & 6 & \cdots & 4n-6 & 4n-3 & 4n-1 & 4n & 4n+3 & 4n+4 & 4n+6 & 4n+8 & 4n+10 & \cdots & 8n-2 & 8n\\
				2 & 4 & 6 & \cdots & 4n-6 & 4n-3 & 4n+1 & 4n+2 & 4n+3 & 4n+5 & 4n+6 & 4n+8 & 4n+10 & \cdots & 8n-2 & 8n
		\end{ytableau}}
		
		\normalsize{$=Z_1$.}
		
		Proof of (ii): Note that
		
		$X_2Y_2=$
		
		\tiny{\ytableausetup{boxsize=3.5em} \ytableausetup{centertableaux}
			\begin{ytableau}
				1& 3 & 5  & \cdots & 4n-7 & 4n-5 & 4n-4 & 4n-3 & 4n-2 & 4n & 4n+2 & 4n+7 & 4n+9 & \cdots & 8n-3 & 8n-1\\
				1 & 3 & 5  & \cdots & 4n-7 & 4n-5 & 4n-4 & 4n-1 & 4n+1 & 4n+3 & 4n+4 & 4n+7 & 4n+9 & \cdots & 8n-3 & 8n-1\\
				1& 3 & 5 & \cdots & 4n-7 & 4n-5 & 4n-2 & 4n-1 & 4n+1 & 4n+4 & 4n+5 & 4n+7 & 4n+9 & \cdots & 8n-3 & 8n-1\\
				2 & 4 & 6 & \cdots & 4n-6 & 4n-4 & 4n-1 & 4n & 4n+3 & 4n+4 & 4n+6 & 4n+8 & 4n+10 & \cdots & 8n-2 & 8n\\
				2 & 4 & 6 & \cdots & 4n-6 & 4n-3 & 4n-2 & 4n & 4n+2 & 4n+5 & 4n+6 & 4n+8 & 4n+10 & \cdots & 8n-2 & 8n\\
				2 & 4 & 6 & \cdots & 4n-6 & 4n-3 & 4n+1 & 4n+2 & 4n+3 & 4n+5 & 4n+6 & 4n+8 & 4n+10 & \cdots & 8n-2 & 8n\\
			\end{ytableau}.}
		
		\normalsize{Since} we are working in $X(w_{6}),$ \normalsize{by using Theorem \ref{thm2.6} and Remark \ref{rmk2.8}, we have }
		
		\tiny{\ytableausetup{boxsize=3.5em} \ytableausetup{centertableaux}
			\begin{ytableau}
				2 & 4 & 6 & \cdots & 4n-6 & 4n-3 & 4n-2 & 4n & 4n+2 & 4n+5 & 4n+6 & 4n+8 & 4n+10 & \cdots & 8n-2 & 8n\\
				2 & 4 & 6 & \cdots & 4n-6 & 4n-4 & 4n-1 & 4n & 4n+3 & 4n+4 & 4n+6 & 4n+8 & 4n+10 & \cdots & 8n-2 & 8n
			\end{ytableau}\\
			$=$  \hspace{.1cm} \begin{ytableau}
				2 & 4 & 6 & \cdots & 4n-6 & 4n-4 & 4n-2 & 4n & 4n+2 & 4n+5 & 4n+6 & 4n+8 & 4n+10 & \cdots & 8n-2 & 8n\\
				2 & 4 & 6 & \cdots & 4n-6 & 4n-3 & 4n-1 & 4n & 4n+3 & 4n+4 & 4n+6 & 4n+8 & 4n+10 & \cdots & 8n-2 & 8n
		\end{ytableau}}
		
		\normalsize{$X_2Y_2=$}
		
		\tiny{\ytableausetup{boxsize=3.5em}  \ytableausetup{centertableaux}
			\begin{ytableau}
				1& 3 & 5  & \cdots & 4n-7 & 4n-5 & 4n-4 & 4n-3 & 4n-2 & 4n & 4n+2 & 4n+7 & 4n+9 & \cdots & 8n-3 & 8n-1\\
				1 & 3 & 5  & \cdots & 4n-7 & 4n-5 & 4n-4 & 4n-1 & 4n+1 & 4n+3 & 4n+4 & 4n+7 & 4n+9 & \cdots & 8n-3 & 8n-1\\
				1& 3 & 5 & \cdots & 4n-7 & 4n-5 & 4n-2 & 4n-1 & 4n+1 & 4n+4 & 4n+5 & 4n+7 & 4n+9 & \cdots & 8n-3 & 8n-1\\
				2 & 4 & 6 & \cdots & 4n-6 & 4n-4 & 4n-2 & 4n & 4n+2 & 4n+5 & 4n+6 & 4n+8 & 4n+10 & \cdots & 8n-2 & 8n\\
				2 & 4 & 6 & \cdots & 4n-6 & 4n-3 & 4n-1 & 4n & 4n+3 & 4n+4 & 4n+6 & 4n+8 & 4n+10 & \cdots & 8n-2 & 8n\\
				2 & 4 & 6 & \cdots & 4n-6 & 4n-3 & 4n+1 & 4n+2 & 4n+3 & 4n+5 & 4n+6 & 4n+8 & 4n+10 & \cdots & 8n-2 & 8n\\
			\end{ytableau}.}
		
		\normalsize{$=Z_2.$}
		
	\end{proof}
	
	\begin{lemma}\label{lemma5.4}
		$R$ is generated by $X_i$ $(1 \leq i \leq 6)$ and $Y_1$ as a $\mathbb{C}$-algebra.
	\end{lemma}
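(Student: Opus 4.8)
The plan is to deduce this purely formally from the three preceding lemmas, which together already collapse the full generating set down to $X_1,\ldots,X_6,Y_1$. By \cref{lem5.1}, the graded algebra $R$ is generated as a $\mathbb{C}$-algebra by the sections $X_i$ $(1\le i\le 6)$, $Y_j$ $(1\le j\le 4)$ and $Z_l$ $(l=1,2)$. Writing $S$ for the $\mathbb{C}$-subalgebra of $R$ generated by $X_1,\ldots,X_6$ and $Y_1$, it therefore suffices to show that each $Y_j$ with $2\le j\le 4$ and each $Z_l$ with $l=1,2$ already lies in $S$; then $R=S$ and the lemma follows.

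First I would dispose of $Y_2,Y_3,Y_4$ by solving the three relations of \cref{lem5.2} for these sections, which gives
\begin{align*}
Y_2 &= Y_1 + X_3X_6 - X_4X_5,\\
Y_3 &= Y_1 + X_1X_6 - X_2X_5,\\
Y_4 &= Y_1 + X_1X_4 - X_2X_3.
\end{align*}
Each right-hand side is a polynomial in $X_1,\ldots,X_6$ and $Y_1$, so $Y_2,Y_3,Y_4\in S$.

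Next I would handle $Z_1,Z_2$ using \cref{lem5.3}, which yields $Z_1=X_2Y_1$ and $Z_2=X_2Y_2$. The first is visibly an element of $S$. For the second, substitute the expression for $Y_2$ just obtained, so that $Z_2=X_2Y_2=X_2\bigl(Y_1 + X_3X_6 - X_4X_5\bigr)\in S$ as well. At this stage every generator appearing in \cref{lem5.1} has been shown to lie in $S$, hence $R=S$, completing the argument.

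I expect no genuine obstacle here: the entire content of the statement is carried by the straightening-law computations already performed in \cref{lem5.2} and \cref{lem5.3}, and the present lemma is only the bookkeeping step that records which combinations of those relations eliminate the superfluous generators. I would also remark that $Y_1$ cannot itself be dropped from the generating set, since it is a standard monomial of $R_2$ that does not belong to $R_1\otimes R_1$ (as noted before \cref{lem5.1}), i.e. it is not expressible as a product of degree-one sections.
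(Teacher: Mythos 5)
Your argument is correct and is precisely what the paper intends: its proof of this lemma is just the one-line statement that it follows from \cref{lem5.1}, \cref{lem5.2} and \cref{lem5.3}, and you have filled in exactly the bookkeeping the authors leave implicit (solving the relations of \cref{lem5.2} for $Y_2,Y_3,Y_4$ and then using \cref{lem5.3} to absorb $Z_1,Z_2$). No issues.
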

	\begin{proof}
		The proof follows from \cref{lem5.1}, \cref{lem5.2} and \cref{lem5.3}.
	\end{proof}
	\begin{corollary}
		Let $w\in W^{P^{\alpha_{4n}}}$ be such that $w_{6}\le w.$ Then corresponding graded ring $R$ is not generated by $R_1.$
	\end{corollary}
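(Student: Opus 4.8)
The plan is to reduce the assertion for an arbitrary $w\ge w_6$ to the single variety $X_G(w_6)$, and then to exhibit a degree-two invariant, namely $Y_1$, that does not lie in the image of the multiplication map $R_1\otimes R_1\to R_2$. Write $R(w)$ for the graded algebra attached to $X_G(w)$. Since $w_6\le w$ we have the inclusion of Schubert varieties $X_G(w_6)\subseteq X_G(w)$, and the restriction map $H^0(X_G(w),\mathcal{L}^{\otimes k}(2\omega_{4n}))\to H^0(X_G(w_6),\mathcal{L}^{\otimes k}(2\omega_{4n}))$ is surjective by standard monomial theory. As $T_G$ is linearly reductive, taking $T_G$-invariants preserves surjectivity, exactly as in the proof of \cref{cor3.3}; hence we obtain a surjective homomorphism of graded $\mathbb{C}$-algebras $R(w)\twoheadrightarrow R(w_6)$ which is surjective in degree one. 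If $R(w)$ were generated by $R_1(w)$, then its image $R(w_6)$ would be generated by the image of $R_1(w)$, i.e.\ by $R_1(w_6)$. Thus it suffices to prove that $R(w_6)$ is not generated by its degree-one part.

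For $w=w_6$, \cref{lemma5.4} shows that $R$ is generated as a $\mathbb{C}$-algebra by $X_1,\dots,X_6$ and $Y_1$. As $X_1,\dots,X_6$ form a basis of $R_1$, the subalgebra generated by $R_1$ meets degree two in $R_1\cdot R_1=\operatorname{span}_{\mathbb{C}}\{X_iX_j:1\le i\le j\le 6\}$. Consequently $R$ is generated by $R_1$ precisely when $Y_1\in R_1\cdot R_1$, so everything comes down to proving $Y_1\notin R_1\cdot R_1$.

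To see this, rewrite \cref{lem5.2} as $X_4X_5-X_3X_6=Y_1-Y_2$, $X_2X_5-X_1X_6=Y_1-Y_3$ and $X_2X_3-X_1X_4=Y_1-Y_4$, so each of $Y_1-Y_2,\ Y_1-Y_3,\ Y_1-Y_4$ lies in $R_1\cdot R_1$. Recall that $X_1,\dots,X_6$, the $Y_j$ and the remaining (product-type) degree-two standard monomials together form the standard-monomial basis of $R_2$. Define the linear functional $\phi\colon R_2\to\mathbb{C}$ sending each $Y_j$ to $1$ and every other basis standard monomial to $0$; equivalently $\phi$ reads off the sum of the coefficients of $Y_1,Y_2,Y_3,Y_4$. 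Then $\phi$ annihilates the three combinations above and annihilates every product-type monomial, and I claim it annihilates every product $X_iX_j$. Granting the claim, $\phi$ vanishes identically on $R_1\cdot R_1$ while $\phi(Y_1)=1$, so $Y_1\notin R_1\cdot R_1$, which finishes the proof.

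The main obstacle is exactly the claim that $\phi(X_iX_j)=0$ for every $i,j$, i.e.\ that in the straightening of each quadratic product the coefficients of the four $Y_j$ sum to zero (equivalently, the $Y$-content of each product lies in $\operatorname{span}\{Y_1-Y_2,\,Y_1-Y_3,\,Y_1-Y_4\}$). This is the precise meaning of the remark, recorded with the definitions of the $Y_j$, that the $Y_j$ are standard monomials not lying in $R_1\otimes R_1$; verifying it rigorously amounts to running the straightening law (\cref{thm2.6} together with \cref{rmk2.7,rmk2.8}) on the finitely many remaining products $X_iX_j$, just as was carried out for the three relations of \cref{lem5.2}.
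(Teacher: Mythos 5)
Your argument follows the paper's own route: the paper likewise reduces to $X_G(w_6)$ via surjectivity of the $T_G$-invariant restriction map and then invokes \cref{lemma5.4}, tacitly relying on the unproved remark that $Y_1\notin R_1\otimes R_1$, which your linear-functional argument is a correct way to justify. The verification you defer --- that $\phi(X_iX_j)=0$ for every product --- is in fact already complete, since $X_2X_3$, $X_2X_5$ and $X_4X_5$ are the only non-standard quadratic products (for every other pair $\{i,j\}$ the two top rows and the two bottom rows of $X_i$, $X_j$ are comparable, so $X_iX_j$ is itself a product-type standard monomial), and in each of the three straightenings of \cref{lem5.2} the coefficients of the $Y_j$ sum to zero.
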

	\begin{proof}
		Since $T_G$ is linearly reductive, the restriction map $$\phi: H^{0}(X_G(w), \mathcal{L}^{\otimes k}(2m\omega_{4n}))^{T_G} \longrightarrow H^{0}(X_G(w_6), \mathcal{L}^{\otimes k}(2m\omega_{4n}))^{T_G}$$ is surjective for all $k \geq 1.$ So, by Lemma \ref{lemma5.4}, $R$ is not generated by degree one elements.
	\end{proof}

	Now, we recall that by \cref{thm2.1}(ii), the very ample line bundle $\mathcal{L}(4\omega_{4n})$ descends to a line bundle on the GIT quotient $T_G \backslash \backslash (G/P^{\alpha_{4n}})^{ss}_{T_G}(\mathcal{L}(4\omega_{4n})).$ We prove that the GIT quotient $T_G \backslash \backslash (X_G(w_6))^{ss}_{T_G}(\mathcal{L}(4\omega_{4n}))$ is projectively normal with respect to the descent of the $T_G$-linearized very ample line bundle $\mathcal{L}(4\omega_{4n}).$
	\begin{theorem}\label{thm5.1}
		The homogeneous coordinate ring of $T_G \backslash \backslash (X_G(w_6))^{ss}_{T_G}(\mathcal{L}(4\omega_{4n}))$ is generated by elements of degree one. 
	\end{theorem}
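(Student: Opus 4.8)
The plan is to identify the homogeneous coordinate ring in question with the even Veronese subalgebra of $R$ and then read off degree-one generation from \cref{lemma5.4}. First I would record the identity $\mathcal{L}(4\omega_{4n})=\mathcal{L}^{\otimes 2}(2\omega_{4n})$. Writing $S=\bigoplus_{k\in\mathbb{Z}_{\geq 0}}S_k$ with $S_k=H^0(X_G(w_6),\mathcal{L}^{\otimes k}(4\omega_{4n}))^{T_G}$ for the homogeneous coordinate ring of $T_G\backslash\backslash (X_G(w_6))^{ss}_{T_G}(\mathcal{L}(4\omega_{4n}))$, this gives
\[
S_k=H^0(X_G(w_6),\mathcal{L}^{\otimes 2k}(2\omega_{4n}))^{T_G}=R_{2k}.
\]
Thus $S=\bigoplus_{k}R_{2k}$ is precisely the subalgebra of $R$ spanned by the even graded pieces, and the statement to be proved is exactly that this subalgebra is generated by $S_1=R_2$ (this is the question (2) raised in the introduction, now for the Schubert variety $X_G(w_6)$).

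Next I would invoke \cref{lemma5.4}, which asserts that $R$ is generated as a $\mathbb{C}$-algebra by $X_1,\ldots,X_6$ and $Y_1$, where each $X_i\in R_1$ and $Y_1\in R_2$. Consequently every element of $R_{2k}$ is a $\mathbb{C}$-linear combination of monomials of the form $X_1^{a_1}\cdots X_6^{a_6}Y_1^{b}$ with $a_1+\cdots+a_6+2b=2k$.

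The crucial observation is a parity count: from $a_1+\cdots+a_6=2k-2b$ the total number $a_1+\cdots+a_6$ of degree-one factors in each such monomial is automatically even. I would therefore group these $X$-factors into $(a_1+\cdots+a_6)/2$ products $X_iX_j$, each lying in $R_2$, and leave the $b$ factors of $Y_1\in R_2$ untouched; this exhibits the monomial as a product of $(a_1+\cdots+a_6)/2+b=k$ elements of $R_2$. Hence $R_{2k}$ is contained in the $k$-th power of $R_2$ inside $R$, and since the reverse containment is immediate, the subalgebra $S=\bigoplus_{k}R_{2k}$ is generated by $S_1=R_2$, which is the claim.

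There is essentially no obstacle left once \cref{lemma5.4} is in hand: the entire content of this theorem is the parity bookkeeping above, and all the genuine difficulty has already been absorbed into the standard-monomial and straightening-law computations of \cref{lem5.1}, \cref{lem5.2}, and \cref{lem5.3} underlying \cref{lemma5.4}. The only point I would take care to justify is the identification $S_k=R_{2k}$, which rests on \cref{thm2.1}(ii) guaranteeing that $\mathcal{L}(4\omega_{4n})$, and hence each of its tensor powers, genuinely descends to the quotient, so that the sections of the descended bundle really do compute the $T_G$-invariant sections upstairs.
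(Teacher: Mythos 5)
Your proposal is correct and follows essentially the same route as the paper: both identify $H^0(X_G(w_6),\mathcal{L}^{\otimes k}(4\omega_{4n}))^{T_G}$ with $R_{2k}$, invoke \cref{lemma5.4} to write any element as a combination of monomials $X^{\underline{m}}Y_1^{n_1}$ with $m_1+\cdots+m_6+2n_1=2k$, and use the evenness of $m_1+\cdots+m_6$ to group the $X_i$'s into $k-n_1$ quadratic factors, yielding a product of $k$ degree-one elements. The only cosmetic difference is that you make the descent justification via \cref{thm2.1}(ii) explicit, which the paper leaves implicit.
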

	\begin{proof}
		Let $f \in H^{0}(X_G(w_6), \mathcal{L}^{\otimes k}(4\omega_{4n}))^{T_G}=H^{0}(X_G(w_6), \mathcal{L}^{\otimes 2k}(2\omega_{4n}))^{T_G}.$ Then by \cref{lemma5.4}, we have $$f = \sum {a_{\underline{m},n_1}}X^{\underline{m}}Y_1^{n_1},$$ where $\underline{m}=(m_1,m_2,m_3,m_4,m_5,m_6) \in \mathbb{Z}_{\geq 0}^6$ and $n_1 \in \mathbb{Z}_{\ge 0}$ such that $m_1+m_2+m_3+m_4+m_5+m_6+2n_1=2k,$ $X^{\underline{m}}$ denotes $X_1^{m_1}X_2^{m_2}X_3^{m_3}X_4^{m_4}X_5^{m_5}X_6^{m_6}$ and $a_{\underline{m}, n_1}$'s are non-zero scalars.
		
		Now to prove that the homogeneous coordinate ring of $T_G \backslash \backslash (X_G(w_6))^{ss}_{T_G}(\mathcal{L}(4\omega_{4n}))$ is generated by $H^0(X_G(w_6), \mathcal{L}(4\omega_{4n}))^{T_G}$ as a $\mathbb{C}$-algebra, it is enough to show that for each $f$ as above and each monomial appearing in the expression of $f$ is a product of $k$ elements of $H^0(X_G(w_6), \mathcal{L}(4\omega_{4n}))^{T_G}.$
		
		Consider the monomial $X^{\underline{m}}Y_{1}^{n_1}$ in the expression of $f.$ Note that $m_1+m_2+m_3+m_4+m_5+m_6$ is an even integer. Thus, $X^{\underline{m}}$ can be written as $\prod_{(i,j)}X_iX_j,$ where the number of pairs $(i,j)$ is $k-n_1$ and repetitions of $X_i$'s are allowed. Therefore, $X^{\underline{m}}$ can be written as product of $k-n_1$ number of monomials in $H^0(X_G(w_6), \mathcal{L}(4\omega_{4n}))^{T_G}.$ On the other hand, since $Y_1\in H^0(X_G(w_6), \mathcal{L}(4\omega_{4n}))^{T_G},$ $X^{\underline{m}}Y_{1}^{n_1}$ is a product of $k$ elements of $H^0(X_G(w_6), \mathcal{L}(4\omega_{4n}))^{T_G}.$
	\end{proof} 
	\begin{corollary}
		The GIT quotient $T_G \backslash \backslash (X_G(w_6))^{ss}_{T_G}(\mathcal{L}(4\omega_{4n}))$ is projectively normal with respect to the descent of the $T_G$-linearized very ample line bundle $\mathcal{L}(4\omega_{4n}).$
	\end{corollary}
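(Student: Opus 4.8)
The plan is to verify the two conditions in the definition of projective normality recalled in \cref{section2}. Writing $Y := T_G \backslash\backslash (X_G(w_6))^{ss}_{T_G}(\mathcal{L}(4\omega_{4n}))$ and letting $\mathcal{M}$ denote the descent of $\mathcal{L}(4\omega_{4n})$ to $Y$, I must show that the homogeneous coordinate ring $\bigoplus_{k \geq 0} H^0(Y, \mathcal{M}^{\otimes k})$ is (a) generated as a $\mathbb{C}$-algebra by its degree-one part and (b) integrally closed. Condition (a) is precisely \cref{thm5.1}: under the standard GIT identification $H^0(Y, \mathcal{M}^{\otimes k}) \cong H^0(X_G(w_6), \mathcal{L}^{\otimes k}(4\omega_{4n}))^{T_G}$, generation of the latter graded ring in degree one is exactly the statement proved there. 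So the only remaining task is condition (b).

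For (b) I would first establish normality of $Y$ and then deduce integral closedness of its section ring. The Schubert variety $X_G(w_6)$ is normal, so its $T_G$-stable open subvariety $(X_G(w_6))^{ss}_{T_G}(\mathcal{L}(4\omega_{4n}))$ is normal; since $T_G$ is linearly reductive, the GIT quotient $Y$ of this normal variety by $T_G$ is again normal. Finally, the homogeneous coordinate ring above is the section ring of the ample line bundle $\mathcal{M}$ on the normal projective variety $Y$, and any such section ring is integrally closed: a homogeneous element of the fraction field integral over the ring is a rational section of a power of $\mathcal{M}$ which, by normality of $Y$, is in fact regular and therefore already lies in the ring. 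This gives (b).

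Combining (a) and (b) shows that $(Y, \mathcal{M})$ is projectively normal, which is the assertion. I do not expect any genuine obstacle at this stage: the substance has already been absorbed into \cref{lem5.1}, \cref{lem5.2}, \cref{lem5.3}, \cref{lemma5.4} and \cref{thm5.1}, which secured generation in degree one via the straightening-law analysis, and the present corollary is a short formal deduction that parallels the corresponding corollary for the $Spin(8,\mathbb{C})$ case in \cref{section4}.
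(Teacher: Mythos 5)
Your proposal is correct and follows essentially the same route as the paper: normality of $X_G(w_6)$ passes to the open semistable locus and then to the GIT quotient by the linearly reductive torus, and combining this with the degree-one generation from \cref{thm5.1} yields projective normality. The only difference is that you spell out explicitly why the section ring of the ample descent bundle on the normal quotient is integrally closed, a standard point the paper leaves implicit.
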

	\begin{proof}
		Since $X_G(w_6)$ is normal and  $X_G(w_6)^{ss}_{T_G}(\mathcal{L}(4\omega_{4n}))$ is  open in $X_{G}(w_{6}),$ $X_G(w_6)^{ss}_{T_G}(\mathcal{L}(4\omega_{4n}))$ is also normal. Hence, ${T_G} \backslash \backslash (X_G(w_6))^{ss}_{T_G}(\mathcal{L}(4\omega_{4n}))$ is a normal variety.  Therefore, by using \cref{thm5.1}, it follows that $T_G \backslash \backslash (X_G(w_6))^{ss}_{T_G}(\mathcal{L}(4\omega_{4n}))$ is projectively normal with respect to the descent of the $T_G$-linearized very ample line bundle $\mathcal{L}(4\omega_{4n}).$
	\end{proof}
	\begin{remark}\label{rmk5.3}
		The GIT quotient $T_G \backslash \backslash (X_G(w_6))^{ss}_{T_G}(\mathcal{L}(4m\omega_{4n}))$ $(m \geq 1)$ is projectively normal with respect to the descent of the $T_G$-linearized very ample line bundle $\mathcal{L}(4m\omega_{4n}).$
	\end{remark}

	\begin{corollary}\label{cor5.4}
		The GIT quotient $T_G \backslash \backslash (X_G(v))^{ss}_{T_G}(\mathcal{L}(4m\omega_{4n}))$ is  projectively normal for all $v \in W^{P^{\alpha_{4n}}}$ such that $w_1 \leq v\le w_{6}$ with respect to the descent of the $T_G$-linearized very ample line bundle $\mathcal{L}(4m\omega_{4n}).$ 
	\end{corollary}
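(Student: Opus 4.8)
The plan is to deduce this from the projective normality of the GIT quotient of $X_G(w_6)$ established in \cref{thm5.1} and \cref{rmk5.3}, following the same pattern used for \cref{cor3.3}. First I would confirm that the GIT quotient is well defined for every $v$ in the stated range, that is, that $X_G(v)^{ss}_{T_G}(\mathcal{L}(4m\omega_{4n}))$ is non-empty and the bundle descends. Since $4m\omega_{4n}$ is a positive multiple of $2\omega_{4n}$ and $w_1(2\omega_{4n}) \leq 0$, the hypothesis $w_1 \leq v$ gives $v(4m\omega_{4n}) \leq 0$ by the same weight-monotonicity of the Bruhat--Chevalley order already invoked for the $w_i$ (for $\lambda$ dominant and $w_1 \leq v$ one has $v(\lambda) \leq w_1(\lambda) \leq 0$). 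Then \cite[Lemma 2.1, p.470]{KP} guarantees that the semistable locus is non-empty, while \cref{thm2.1}(ii) ensures that $\mathcal{L}(4m\omega_{4n})$ descends.

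The core step is to transport degree-one generation from $w_6$ down to $v$. Since $v \leq w_6$ we have the inclusion $X_G(v) \subseteq X_G(w_6)$, and the restriction map
\[
\phi\colon H^0(X_G(w_6), \mathcal{L}^{\otimes k}(4m\omega_{4n})) \longrightarrow H^0(X_G(v), \mathcal{L}^{\otimes k}(4m\omega_{4n}))
\]
is surjective for every $k \geq 0$. Because $T_G$ is linearly reductive, taking $T_G$-invariants is exact, so the induced map $\tilde{\phi}$ on $T_G$-invariant sections is again surjective. This realizes the homogeneous coordinate ring $\bigoplus_{k} H^0(X_G(v), \mathcal{L}^{\otimes k}(4m\omega_{4n}))^{T_G}$ of the GIT quotient of $X_G(v)$ as a quotient graded $\mathbb{C}$-algebra of the corresponding ring for $X_G(w_6)$.

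Next I would invoke the elementary fact that degree-one generation passes along a surjective graded algebra homomorphism: any $T_G$-invariant section of degree $k$ over $X_G(v)$ lifts along $\tilde{\phi}$ to $X_G(w_6)$, where by \cref{rmk5.3} it is a polynomial in degree-one invariants; applying $\tilde{\phi}$ then expresses it as a polynomial in degree-one invariants over $X_G(v)$. Hence the homogeneous coordinate ring of $T_G \backslash\backslash (X_G(v))^{ss}_{T_G}(\mathcal{L}(4m\omega_{4n}))$ is generated by its degree-one part. Normality comes for free, as in the corollaries above: $X_G(v)$ is normal, so its open semistable locus $X_G(v)^{ss}_{T_G}(\mathcal{L}(4m\omega_{4n}))$ is normal, and therefore the quotient variety is normal. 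Combining generation in degree one with normality yields projective normality.

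I do not expect a genuine obstacle here; the substantive content was already expended in proving \cref{thm5.1} and \cref{rmk5.3} for $w_6$, and this corollary is a formal consequence. The only two points requiring a moment of care are the non-emptiness of the semistable locus for every intermediate $v$ (handled by the weight-monotonicity together with \cite{KP}) and the surjectivity of the invariant restriction map, which rests on linear reductivity of $T_G$ together with the standard surjectivity of restriction of sections between nested Schubert varieties.
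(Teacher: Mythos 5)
Your proposal is correct and follows essentially the same route as the paper: the paper's proof is exactly the surjectivity of the restriction map on $T_G$-invariants (from linear reductivity of $T_G$ and surjectivity of restriction between nested Schubert varieties) combined with \cref{rmk5.3} for $X_G(w_6)$. Your additional verification that the semistable locus of $X_G(v)$ is non-empty and that the line bundle descends is a sensible preliminary that the paper leaves implicit, but it does not change the argument.
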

	
	\begin{proof}
		Since $T_G$ is linearly reductive, the restriction map $$\phi: H^{0}(X_G(w_{6}), \mathcal{L}^{\otimes k}(4m\omega_{4n}))^{T_G} \longrightarrow H^{0}(X_G(v), \mathcal{L}^{\otimes k}(4m\omega_{4n}))^{T_G}$$ is surjective for all $k \geq 1$.
		
		So, by Remark \ref{rmk5.3}, $T_G \backslash \backslash (X_G(v))^{ss}_{T_G}(\mathcal{L}(4m\omega_{4n}))$ is projectively normal with respect to the descent of the $T_G$-linearized very ample line bundle $\mathcal{L}(4m\omega_{4n}).$
	\end{proof}

	\begin{proposition} We have
		\begin{itemize}
			\item[(i)] The GIT quotient $T_G\backslash\backslash(X_G(w_1))^{ss}_{T_G}(\mathcal{L}(4\omega_{4n}))$ is a point.
			\item[(ii)] The GIT quotient $T_G\backslash\backslash(X_G(w_2))^{ss}_{T_G}(\mathcal{L}(4\omega_{4n}))$ is isomorphic to $(\mathbb{P}^{1},\mathcal{O}_{\mathbb{P}^{1}}(2))$ as a polarized variety.  
			\item[(iii)] The GIT quotient $T_G\backslash\backslash(X_G(w_3))^{ss}_{T_G}(\mathcal{L}(4\omega_{4n}))$ is isomorphic to $(\mathbb{P}^{1},\mathcal{O}_{\mathbb{P}^{1}}(2))$ as a polarized variety.
			\item[(iv)] The GIT quotient $T_G\backslash\backslash(X_G(w_4))^{ss}_{T_G}(\mathcal{L}(4\omega_{4n}))$ is isomorphic to $(\mathbb{P}^{3},\mathcal{O}_{\mathbb{P}^{3}}(2))$ as a polarized variety.
			\item[(v)] The GIT quotient $T_G\backslash\backslash(X_G(w_5))^{ss}_{T_G}(\mathcal{L}(4\omega_{4n}))$ is isomorphic to $(\mathbb{P}^{2},\mathcal{O}_{\mathbb{P}^{2}}(2))$ as a polarized variety.
		\end{itemize}	
	\end{proposition}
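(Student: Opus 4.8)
The plan is to treat all five cases uniformly: for each $w_i$ I would determine exactly which of the algebra generators of the ring $R(X_G(w_6))$ survive under restriction to $X_G(w_i)$, recognize the resulting graded ring as a polynomial ring, and then pass to its even Veronese subring, which is the homogeneous coordinate ring of the GIT quotient with respect to $\mathcal{L}(4\omega_{4n})$.

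First I would record the Bruhat order among $w_1,\dots,w_6$ read off from Figure $1$: $w_1<w_2<w_4<w_6$, $w_1<w_3<w_5<w_6$ and $w_3<w_4$, with $w_2,w_3$ incomparable and $w_4,w_5$ incomparable. By standard monomial theory a standard monomial $p_\Gamma$ restricts to a nonzero section on $X_G(w_i)$ precisely when the bottom row of $\Gamma$ is $\le w_i$. For the degree-one generators $X_j$ the bottom row is exactly $w_j$, so $X_j|_{X_G(w_i)}\ne 0$ if and only if $w_j\le w_i$, while $Y_1$ (bottom row $w_6$) vanishes on every $X_G(w_i)$ with $i\le 5$. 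Since $T_G$ is linearly reductive, the restriction map $R(X_G(w_6))\to R(X_G(w_i))$ is a surjective graded $\mathbb{C}$-algebra homomorphism (exactly as in the proof of \cref{cor5.4}), so by \cref{lemma5.4} the ring $R(X_G(w_i))$ is generated by the surviving $X_j$. This yields the generator sets $\{X_1\}$ for $w_1$, $\{X_1,X_2\}$ for $w_2$, $\{X_1,X_3\}$ for $w_3$, $\{X_1,X_2,X_3,X_4\}$ for $w_4$, and $\{X_1,X_3,X_5\}$ for $w_5$.

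Next I would verify that each of these sets is algebraically independent, so that $R(X_G(w_i))$ is a polynomial algebra. For this I restrict the quadratic relations of \cref{lem5.2} and \cref{lem5.3} to $X_G(w_i)$. In every case except $w_4$, each relation contains a factor ($X_4$, $X_5$, $X_6$, some $Y_j$, or some $Z_l$) whose bottom row is not $\le w_i$, so the relations collapse to the trivial identity $0=0$ and impose no constraint on the surviving generators. For $w_4$ the element $Y_4$ (bottom row $w_4$) also survives, but relation (iii) of \cref{lem5.2}, together with $Y_1|_{X_G(w_4)}=0$, reads $Y_4=X_1X_4-X_2X_3$; hence $Y_4$ contributes no new generator and no new relation among $X_1,X_2,X_3,X_4$, while (i) and (ii) again become trivial. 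Thus $R(X_G(w_i))=\mathbb{C}[X_{j_1},\dots,X_{j_N}]$ is a polynomial ring in $N=1,2,2,4,3$ variables respectively, and $\mathrm{Proj}\,R(X_G(w_i))\cong\mathbb{P}^{N-1}$ with $\mathcal{O}(1)$ in the grading defined by $\mathcal{L}(2\omega_{4n})$.

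Finally, since for $n\ge 2$ the bundle $\mathcal{L}(2\omega_{4n})$ does not descend while $\mathcal{L}(4\omega_{4n})$ does (\cref{thm2.1}(ii)), the homogeneous coordinate ring of the GIT quotient with respect to $\mathcal{L}(4\omega_{4n})$ is the even Veronese subring $\bigoplus_m R_{2m}$. The degree-two Veronese of a polynomial ring in $N$ variables has the same $\mathrm{Proj}$, namely $\mathbb{P}^{N-1}$, but with polarization $\mathcal{O}(2)$; for $N=1$ it is a single reduced point. Reading off $N$ in each case gives the asserted descriptions: a point for $w_1$, $(\mathbb{P}^1,\mathcal{O}_{\mathbb{P}^1}(2))$ for $w_2$ and $w_3$, $(\mathbb{P}^3,\mathcal{O}_{\mathbb{P}^3}(2))$ for $w_4$, and $(\mathbb{P}^2,\mathcal{O}_{\mathbb{P}^2}(2))$ for $w_5$. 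The main obstacle is the algebraic-independence step: it presupposes that the relations of \cref{lem5.2} and \cref{lem5.3} account for all syzygies among the surviving generators. I would secure this either by checking that these straightening-law relations already exhaust the quadratic syzygies, or, more robustly, by the dimension count $\dim\mathrm{Proj}\,R(X_G(w_i))=N-1$, obtained from the existence of stable points and the computation of $\dim X_G(w_i)$, which forces the $N$ degree-one generators to be independent.
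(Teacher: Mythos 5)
Your identification of the surviving generators on each $X_G(w_i)$ --- via surjectivity of the $T_G$-invariant restriction map from $X_G(w_6)$ and the criterion that $p_\Gamma$ restricts to zero exactly when the bottom row of $\Gamma$ is not $\le w_i$ --- is exactly what the paper does, as is your observation that on $X_G(w_4)$ the relation (iii) of \cref{lem5.2} together with $Y_1|_{X_G(w_4)}=0$ makes $Y_4=X_1X_4-X_2X_3$ redundant. The genuine gap is the algebraic-independence step, and neither of your two proposed fixes closes it. Restricting the relations of \cref{lem5.2} and \cref{lem5.3} and seeing them collapse to $0=0$ only shows that \emph{those} relations impose nothing; the restriction map has a large kernel, so additional relations among the surviving $X_j$ could appear, and excluding them amounts to showing that the distinct products $\prod X_j^{a_j}$ straighten to distinct standard tableaux all of whose rows are $\le w_i$, i.e.\ to counting the $T_G$-invariant standard monomials on $X_G(w_i)$ in each degree. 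Your ``more robust'' fallback via a dimension count fails outright: the paper only invokes \cite{KP} to get $X_G(w_i)^{ss}_{T_G}(\mathcal{L}(2\omega_{4n}))\neq\emptyset$, not the existence of stable points, and stable points cannot exist here for $n\ge 2$: one computes $\ell(w_1)=4n^2$, so $\dim X_G(w_i)-\dim T_G = 4n^2+\varepsilon_i-4n$ with $0\le\varepsilon_i\le 3$, which vastly exceeds the dimensions $0,1,1,3,2$ of the quotients. Hence $\dim\mathrm{Proj}\,R(X_G(w_i))$ cannot be read off from $\dim X_G(w_i)$.

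The paper fills this step differently: for each $w_i$ it writes an explicit presentation of the even Veronese ring $\bigoplus_k R_{2k}$ --- generators the degree-two standard monomials in the surviving $X_j$ (together with $Y_4$ in case (iv)), relations the $2\times 2$ minors of the symmetric matrix of products --- verifies via Bergman's diamond lemma that the presented ring has the expected monomial basis, and then identifies the relation ideal with the ideal of the $2$-uple embedding of $\mathbb{P}^{N-1}$ by a dimension comparison (done in Macaulay2). If you want to keep your cleaner formulation ``$R(X_G(w_i))$ is a polynomial ring; now take its second Veronese,'' you must replace the stable-point dimension argument by the standard-monomial count (in effect, the restriction to $X_G(w_i)$ of the case analysis in the proof of \cref{lem5.1}).
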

	\begin{proof} Note that $Y_{1}=0$ on $X_G(w_{i})$ for all $1 \leq i \leq 5.$

		Proof of (i):  Since $T_G$ is linearly reductive, the restriction map $$\phi: H^{0}(X_G(w_6), \mathcal{L}^{\otimes k}(4\omega_{4n}))^{T_G} \\ \longrightarrow H^{0}(X_G(w_{1}), \mathcal{L}^{\otimes k}(4\omega_{4n}))^{T_G}$$ is surjective for all $k \geq 1$. Thus, by \cref{lemma5.4}, any standard monomial in $H^{0}(X_G(w_1),\mathcal{L}^{\otimes k}(4\omega_{4n}))^{T_G}$ is of the form $X_1^{2k}.$ Hence, $T_G\backslash\backslash(X_G(w_1))^{ss}_{T_G}(\mathcal{L}(4\omega_{4n}))=Proj(\mathbb{C}[X_1^2]).$ 
		
		Proof of (ii):  Since $T_G$ is linearly reductive, the restriction map $$\phi: H^{0}(X_G(w_6), \mathcal{L}^{\otimes k}(4\omega_{4n}))^{T_G} \longrightarrow H^{0}(X_G(w_{2}), \mathcal{L}^{\otimes k}(4\omega_{4n}))^{T_G}$$ is surjective for all $k \geq 1$. Thus, by \cref{lemma5.4}, $X_1^2, X_1X_2, X_2^2$ are standard monomials in $H^{0}(X_G(w_2),\mathcal{L}(4\omega_{4n}))^{T_G}.$ 
		
		Note that by \cref{cor5.4}, $T_G \backslash \backslash (X_G(w_2))^{ss}_{T_G}(\mathcal{L}(4\omega_{4n}))$ is projectively normal. Therefore, there is a surjective homomorphism of $\mathbb{C}$-algebras $$\phi: \mathbb{C}[z_0,z_1,z_2] \longrightarrow \oplus_{k \in \mathbb{Z}_{\geq 0}}H^{0}(X_G(w_2), \mathcal{L}^{\otimes k}(4\omega_{4n}))^{T_G},$$ defined by
		$$z_0 \mapsto X_1^2,\\ z_1 \mapsto X_1X_2 \text{ and }\\ z_2 \mapsto X_2^2.$$ 
		
		Let $I$ be the ideal of $\mathbb{C}[z_0,z_1,z_2]$ generated by the relation
		\begin{equation}\label{5.1}
			z_0z_2=z_1^2.
		\end{equation} 
		
		Clearly, $I \subseteq \ker\phi$ and $\phi$ induces a homomorphism of $\mathbb{C}$-algebras $$\tilde{\phi}: \mathbb{C}[z_0,z_1,z_2]/I \longrightarrow \oplus_{k \in \mathbb{Z}_{\geq 0}}H^{0}(X_G(w_2), \mathcal{L}^{\otimes k}(4\omega_{4n}))^{T_G}.$$Now, we show that $\tilde{\phi}$ is an isomorphism. To complete the proof we use  \eqref{5.1} as a reduction system. The process is of replacing a monomial $M$ in $z_i$'s which is divisible by a term $L_i$'s on the left hand side of the reduction rule $L_i=R_i$ by $(M/L_i)R_i,$ where $R_i$ is the right hand side of the reduction rule. We show that the diamond lemma of ring theory holds for this reduction system (see \cite{bergman}). That is any monomial in $z_i$'s reduces after applying the reduction rule to a unique expression in $z_i$'s, in which no term is divisible by a term appearing on the left hand side of the above reduction system.
		
		Since we have only one reduction rule, it is enough to check for $z_0z_1z_2.$ Note that $z_0z_1z_2=(z_0z_2)z_1=z_1^3,$ for which no further reduction is possible. Therefore, $\tilde{\phi}$ is an isomorphism. Hence, the GIT quotient  $T_G\backslash\backslash(X_G(w_2))^{ss}_{T_G}(\mathcal{L}(4\omega_{4n}))$ is isomorphic to $Proj(\frac{\mathbb{C}[z_0, z_1, z_2]}{(z_1^2-z_0z_2)}).$ Thus, the GIT quotient $T_G\backslash\backslash(X_G(w_2))^{ss}_{T_G}(\mathcal{L}(4\omega_{4n}))$ is isomorphic to  $(\mathbb{P}^1, \mathcal{O}_{\mathbb{P}^1}(2))$ as a polarized variety.
		
		Proof of (iii) is similar to the proof of (ii).
		
		Proof of (iv):  Since $T_G$ is linearly reductive, the restriction map $$\phi: H^{0}(X_G(w_6), \mathcal{L}^{\otimes k}(4\omega_{4n}))^{T_G} \\ \longrightarrow H^{0}(X_G(w_{4}), \mathcal{L}^{\otimes k}(4\omega_{4n}))^{T_G}$$ is surjective for all $k \geq 1$. Thus, by \cref{lem5.1}, $X_1^2,$ $X_2^2,$ $X_3^2,$ $X_4^2,$ $X_1X_2,$ $X_1X_3,$ $X_1X_4,$ $X_2X_4,$ $X_3X_4,$ $Y_4$ are standard monomials in $H^0(X_G(w_4), \mathcal{L}(4\omega_{4n}))^{T_G}.$ Further, by \cref{lem5.2}, $X_2X_3=X_1X_4-Y_4.$  
		
		Note that by \cref{cor5.4}, $T_G \backslash \backslash (X_G(w_4))^{ss}_{T_G}(\mathcal{L}(4\omega_{4n}))$ is projectively normal. Let $A=\mathbb{C}[z_1,z_2,z_3,z_4,z_5,z_6,z_7,z_8,z_9,z_{10}]$ be the polynomial algebra with variables $z_i$'s ($1 \leq i \leq 10$). Then there is a surjective homomorphism $$\phi: A \rightarrow \oplus_{k \in \mathbb{Z}_{\geq 0}}H^{0}(X_G(w_4), \mathcal{L}^{\otimes k}(4\omega_{4n}))^{T_G},$$ defined by \begin{equation*}
			\begin{split}z_1 \mapsto X_1^2\\ z_2 \mapsto X_2^2\\ z_3 \mapsto X_3^2\\ z_4 \mapsto X_4^2\\ z_5 \mapsto X_1X_2\\ z_6 \mapsto X_1X_3\\ z_7 \mapsto X_1X_4\\ z_8 \mapsto X_2X_3\\ z_9 \mapsto X_2X_4\\ z_{10} \mapsto X_3X_4.
			\end{split}
		\end{equation*} 
		
		Let $I$ be the ideal of $A$ generated by the following relations:
		\begin{equation}\label{4.4}
			z_1z_2=z_5^2
		\end{equation}
		\begin{equation}\label{4.5}
			z_1z_3=z_6^2
		\end{equation}
		\begin{equation}\label{4.6}
			z_1z_4=z_7^2
		\end{equation}
		\begin{equation}\label{4.7}
			z_1z_8=z_5z_6
		\end{equation}
		\begin{equation}\label{4.8}
			z_1z_9=z_5z_7
		\end{equation} 
		\begin{equation}\label{4.9}
			z_1z_{10}=z_6z_7
		\end{equation}
		\begin{equation}\label{4.10}
			z_2z_3=z_8^2
		\end{equation}
		\begin{equation}\label{4.11}
			z_2z_4=z_9^2
		\end{equation}
		\begin{equation}\label{4.12}
			z_2z_6=z_5z_8
		\end{equation}
		\begin{equation}\label{4.13}
			z_2z_7=z_5z_9
		\end{equation}
		\begin{equation}\label{4.14}
			z_2z_{10}=z_8z_9
		\end{equation}
		\begin{equation}\label{4.15}
			z_3z_4=z_{10}^2
		\end{equation}
		\begin{equation}\label{4.16}
			z_3z_5=z_6z_8
		\end{equation}
		\begin{equation}\label{4.17}
			z_3z_7=z_6z_{10}
		\end{equation}
		\begin{equation}\label{4.18}
			z_3z_9=z_8z_{10}
		\end{equation}
		\begin{equation}\label{4.19}
			z_4z_5=z_7z_9
		\end{equation}
		\begin{equation}\label{4.20}
			z_4z_6=z_7z_{10}
		\end{equation}
		\begin{equation}\label{4.21}
			z_4z_8=z_9z_{10}
		\end{equation}
		\begin{equation}\label{4.22}
			z_5z_{10}=z_6z_9
		\end{equation}
		\begin{equation}\label{4.23}
			z_6z_{9}=z_7z_8
		\end{equation}

		Clearly, $I \subseteq \ker\phi$ and $\phi$ induces a homomorphism of $\mathbb{C}$-algebras $$\tilde{\phi}: A/ I \longrightarrow\bigoplus\limits_{k \in \mathbb{Z}_{\geq 0}}H^{0}(X_{G}(w_4), \mathcal{L}^{\otimes k}(4\omega_{4n}))^T.$$ Now, we show that $\tilde{\phi}$ is an isomorphism. 
		
		To complete the proof we use the above relations as a reduction system. We show that diamond lemma (see \cite{bergman}) holds for this reduction system by looking at the reduction of the following minimal overlapping ambiguities: \begin{center}$z_1z_2z_3, z_1z_2z_4, z_1z_2z_6, z_1z_2z_7, z_1z_2z_8, z_1z_2z_9, z_1z_2z_{10}, z_1z_3z_4, z_1z_3z_5, z_1z_3z_7, z_1z_3z_8, z_1z_3z_9,
			z_1z_3z_{10},$\\ $z_1z_4z_5, z_1z_4z_6, z_1z_4z_8, z_1z_4z_9, z_1z_4z_{10}, z_1z_5z_{10}, z_1z_6z_9, z_1z_8z_9, z_1z_8z_{10}, z_1z_9z_{10}, z_2z_3z_4, z_2z_3z_5, z_2z_3z_6,$\\ $z_2z_3z_7,  z_2z_3z_9, z_2z_3z_{10}, z_2z_4z_5, z_2z_4z_6, z_2z_4z_7, z_2z_4z_8, z_2z_4z_{10}, z_2z_5z_{10}, z_2z_6z_7, z_2z_6z_9, z_2z_6z_{10}, z_2z_7z_{10},$\\ $z_3z_4z_5, z_3z_4z_6, z_3z_4z_7, z_3z_4z_8, z_3z_4z_9, z_3z_5z_7, z_3z_5z_9, z_3z_5z_{10}, z_3z_6z_9, z_3z_7z_9, z_4z_5z_6, z_4z_5z_8, z_4z_5z_{10},$\\ $z_4z_6z_8, z_4z_6z_9. \hspace{15cm}$
		\end{center}
		Note that $z_1z_2z_3=(z_1z_2)z_3=z_3z_5^2 ~(\text{using \cref{4.4}})=(z_3z_5)z_5=z_5z_6z_8 ~(\text{using \cref{4.16}}).$ 
		
		Again, $z_1z_2z_3=z_1(z_2z_3)=z_1z_8^2 ~(\text{using \cref{4.10}})=(z_1z_8)z_8=z_5z_6z_8 ~(\text{using \cref{4.7}})$
		
		Also, $z_1z_2z_3=(z_1z_3)z_2=z_2z_6^2 ~(\text{using \cref{4.5}})=(z_2z_6)z_6=z_5z_6z_8 ~(\text{using \cref{4.12}}).$
		
		Therefore, the reduction is unique. 
		
		Likewise, we can show that the remaining overlapping ambiguities reduce to a unique expression in $z_i$'s after applying different reduction rules. Therefore, $\tilde{\phi}$ is an isomorphism. 
		
		Consider the embedding $$\psi: \mathbb{P}^3 \hookrightarrow \mathbb{P}^9$$ given by $$(x_1, x_2, x_3, x_4) \mapsto (x_1^2, x_2^2, x_3^2, x_4^2, x_1x_2, x_1x_3, x_1x_4, x_2x_3, x_2x_4, x_3x_4).$$ Let $J$ be the homogeneous ideal of $A$ generated by $2 \times 2$ minors of the matrix $\begin{pmatrix}
			z_1 & z_5 & z_6 & z_7 \\
			z_5 & z_2 & z_8 & z_9 \\
			z_6 & z_8 & z_3 & z_{10} \\
			z_7 & z_9 & z_{10} & z_4 \\
		\end{pmatrix},$ where $z_i$'s are the homogeneous coordinates of $\mathbb{P}^9.$ Note that $\psi(\mathbb{P}^3)$ is given by $J.$ Further, note that $I \subseteq J$ and $dim (A/I)=dim(A/J)=4$ (computed using Macaulay2 \cite{GS}). Hence, the GIT quotient $T_G\backslash\backslash(X_G(w_4))^{ss}_{T_G}(\mathcal{L}(4\omega_{4n}))$ is isomorphic to $(\mathbb{P}^3, \mathcal{O}_{\mathbb{P}^3}(2))$ as a polarized variety. 
		
		Proof of (v):  Since $T_G$ is linearly reductive, the restriction map $$\phi: H^{0}(X_G(w_6), \mathcal{L}^{\otimes k}(4\omega_{4n}))^{T_G} \\ \longrightarrow H^{0}(X_G(w_{5}), \mathcal{L}^{\otimes k}(4\omega_{4n}))^{T_G}$$ is surjective for all $k \geq 1$. Thus, by \cref{lem5.1}, $X_1^2,$ $X_3^2,$ $X_5^2,$ $X_1X_3,$ $X_1X_5,$ $X_3X_5$ are standard monomials in $H^0(X_G(w_4), \mathcal{L}(4\omega_{4n}))^{T_G}.$ 
		
		Note that by \cref{cor5.4}, $T_G \backslash \backslash (X_G(w_5))^{ss}_{T_G}(\mathcal{L}(4\omega_{4n}))$ is projectively normal.  Let $A=\mathbb{C}[z_1,z_2,z_3,z_4,z_5,z_6]$ be the polynomial algebra with variables $z_i$'s. Then there is a surjective homomorphism $$\phi: A \longrightarrow \oplus_{k \in \mathbb{Z}_{\geq 0}}H^{0}(X_G(w_5), \mathcal{L}^{\otimes k}(4\omega_{4n}))^{T_G},$$ defined by  $$z_1\mapsto X_1^2$$ $$z_2\mapsto X_3^2$$ $$z_3\mapsto X_5^2$$ $$z_4\mapsto X_1X_3$$ $$z_5\mapsto X_1X_5$$  $$z_6\mapsto X_3X_5.$$ Let $I$ be the ideal of $A$ generated by the following relations: 
		\begin{equation}\label{4.24}
			z_1z_2=z_4^2
		\end{equation}
		\begin{equation}\label{4.25}
			z_1z_3=z_5^2
		\end{equation}
		\begin{equation}\label{4.26}
			z_2z_3=z_6^2
		\end{equation}
		\begin{equation}\label{4.27}
			z_1z_6=z_4z_5
		\end{equation}
		\begin{equation}\label{4.28}
			z_2z_5=z_4z_6
		\end{equation}
		\begin{equation}\label{4.29}
			z_3z_4=z_5z_6.
		\end{equation}
		
		Clearly, $I\subseteq \ker\phi,$ and $\phi$ induces a homomorphism 
		$$\tilde{\phi}: A/I \longrightarrow\bigoplus\limits_{k \in \mathbb{Z}_{\geq 0}}H^{0}(X_G(w_5), \mathcal{L}^{\otimes k}(4\omega_{4n}))^{T_G}.$$
		
		Now, we show that $\tilde{\phi}$ is an isomorphism. 
		
		To complete the proof we use  the above relations as a reduction system. We show that diamond lemma (see \cite{bergman}) holds for this reduction system by looking at the reduction of the minimal overlapping ambiguities: $z_1z_2z_3, z_1z_2z_5, z_1z_2z_6, z_1z_3z_4, z_1z_3z_6, z_2z_3z_4, z_2z_3z_5.$
		
		Note that $z_1z_2z_3=z_3(z_1z_2)=z_3z_4^2 ~(\text{using \cref{4.24}})=(z_3z_4)z_4=z_4z_5z_6 ~(\text{using \cref{4.29}}).$
		
		Again $z_1z_2z_3=z_1(z_2z_3)=z_1z_6^2 ~(\text{using \cref{4.26}})=(z_1z_6)z_6=z_4z_5z_6 ~(\text{using \cref{4.27}}).$
		
		Also, $z_1z_2z_3=z_2(z_1z_3)=z_2z_5^2 ~(\text{using \cref{4.25}})=(z_2z_5)z_5=z_4z_5z_6 ~(\text{using \cref{4.28}}).$

		Likewise, we can show that the remaining overlapping ambiguities:
		
		$z_1z_2z_5, z_1z_2z_6, z_1z_3z_4, z_1z_3z_6, z_2z_3z_4, z_2z_3z_5$ reduce to unique expressions in $z_i$'s  namely, $z_4^2z_5,$ $z_4^2z_6,$ $z_4z_5^2,$ $z_5^2z_6,$ $z_4z_6^2,$ $z_5z_6^2$ after applying different reduction rules. Therefore, $\tilde{\phi}$ is an isomorphism. 
		
		Consider the embedding $$\psi: \mathbb{P}^2 \hookrightarrow \mathbb{P}^5$$ given by $$(x_1, x_2, x_3) \mapsto (x_1^2, x_2^2, x_3^2, x_1x_2, x_1x_3, x_2x_3).$$ Let $J$ be the homogeneous ideal of $A$ generated by $2 \times 2$ minors of the matrix $\begin{pmatrix}
			z_1 & z_4 & z_5  \\
			z_4 & z_2 & z_6  \\
			z_5 & z_6 & z_3 \\
		\end{pmatrix},$ where $z_i$'s are the homogeneous coordinates of $\mathbb{P}^5.$ Note that $\psi(\mathbb{P}^2)$ is given by $J.$ Further, note that $I \subseteq J$ and $dim (A/I)=dim(A/J)=3$ (computed using Macaulay2 \cite{GS}). Thus, the GIT quotient $T_G\backslash\backslash(X_G(w_5))^{ss}_{T_G}(\mathcal{L}(4\omega_{4n}))$ is isomorphic to $(\mathbb{P}^2, \mathcal{O}_{\mathbb{P}^2}(2))$ as a polarized variety.
		
	\end{proof}	
	
	\section{Projective normality}
	\subsection{$G=Spin(2n,\mathbb{C})(n\ge 4)$}	
	Let $G=Spin(2n,\mathbb{C})$ and $T_{G}$ be a maximal torus of $G.$ Let $P^{\alpha_{1}} (\supset T_G)$ be the maximal parabolic subgroup of $G$ corresponding to $\alpha_{1}.$ 
	In this section, we study the GIT quotient of $G/P^{\alpha_1}$ with respect to the descend of the $T_G$-linearized very ample line bundle $\mathcal{L}(2\omega_1).$ 
	
	Let $w=(s_{1}s_{2}\cdots s_{n-2})(s_{n-1}s_{n})(s_{n-2}s_{n-3} \cdots s_{1}) \in W^{P^{\alpha_1}}.$ Then note that $w$ is the minimal coset representative of the longest element of $W$ in $W/W^{P^{\alpha_1}}.$ Thus, we have $X(w)=G/P^{\alpha_1}.$ 
	
	Let $X= T_{G}\backslash \backslash (G/P^{\alpha_1})^{ss}_{T_G}(\mathcal{L}(2\omega_1)).$
	Let $R=\bigoplus\limits_{k\in\mathbb{Z}_{\geq 0}}R_k,$ where $R_{k}=H^0(G/P^{\alpha_1}, \mathcal{L}^{\otimes k}(2\omega_{1}))^{T_{G}}.$ Then we have $X=Proj(R).$ 
	
	Since $w(2\omega_1)<0,$ by \cite[Lemma 3.1, p.276]{KNS}, it follows that $(G/P^{\alpha_1})^{s}_{T_G} (\mathcal{L}(2\omega_{1}))$ is non-empty.  Therefore, the dimension of $X$ is $2n-2-n(=n-2).$

	For $1 \leq j \leq n-1,$  let \small{\begin{equation*}X_{j}=\ytableausetup{boxsize=3.3em}\begin{ytableau}
				j \\ j \\ \scriptstyle{2n+1-j} \\ \scriptstyle{2n+1-j} \\
			\end{ytableau}.\end{equation*}} \normalsize{Then} $X_j$'s form a basis of $R_1.$ Further, we have the following lemma.
	\begin{lemma}\label{lemma7.1}
		The graded $\mathbb{C}$-algebra $R$ is generated by $R_{1}.$
	\end{lemma}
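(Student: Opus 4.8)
The plan is to mimic the combinatorial argument of \cref{lemma3.1}, working entirely with $Spin(2n,\mathbb{C})$-standard Young tableaux of shape $(4k)$. Recall from \cref{subsection2.2} that a standard monomial $p_\Gamma \in R_k = H^0(G/P^{\alpha_1},\mathcal{L}^{\otimes k}(2\omega_1))^{T_G}$ is indexed by a $Spin(2n,\mathbb{C})$-standard tableau $\Gamma$ which is a single column of $4k$ boxes with entries non-decreasing from top to bottom and which, by condition $(2)$ of \cref{subsection2.2}, splits into $2k$ equal consecutive pairs $r_{2i-1}=r_{2i}$; I will call these the \emph{pair-blocks} of $\Gamma$ and write $v_1\ge v_2\ge\cdots\ge v_{2k}$ for their values read from the bottom. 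Exactly as in \cref{lemma3.1}, it suffices to produce, for every $\Gamma$ of degree $k\ge 2$, a sub-tableau whose associated monomial lies in $R_1$ together with a complementary tableau of degree $k-1$; the statement then follows by induction on $k$.

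First I would record the invariance constraint: by \cref{zeroweight}, $p_\Gamma$ is $T_G$-invariant if and only if $c_\Gamma(t)=c_\Gamma(2n+1-t)$ for all $1\le t\le 2n$, and in particular $c_\Gamma(n)=c_\Gamma(n+1)$. The crucial preliminary step is to show that \emph{no entry of $\Gamma$ equals $n$ or $n+1$}. Indeed, if $n$ occurred then $n+1$ would occur as well, each inside pair-blocks; since the block values $v_i$ are weakly decreasing, every $(n+1)$-block sits below every $n$-block, so there is an index $i$ with $v_{2i}=n+1$ directly below $v_{2i+1}=n$. This is precisely the configuration forbidden by condition $(1)$ of $Spin(2n,\mathbb{C})$-standardness around the middle values $n,n+1$ (here $n+1\not\equiv n \pmod 2$), a contradiction. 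Hence $c_\Gamma(n)=c_\Gamma(n+1)=0$, so every pair-block value lies in $\{1,\dots,n-1\}\cup\{n+2,\dots,2n\}$.

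With this in hand I would extract the generator. Let $a$ be the smallest value occurring in $\Gamma$. By the previous paragraph $a\neq n,n+1$, and if $a\ge n+2$ then every value is $\ge n+2$, forcing $c_\Gamma(2n+1-a)=0$ while invariance gives $c_\Gamma(2n+1-a)=c_\Gamma(a)>0$, a contradiction; thus $1\le a\le n-1$. By invariance there is also a pair-block of value $2n+1-a\ge n+2$. Choosing one $a$-block and one $(2n+1-a)$-block yields the four boxes $a,a,2n+1-a,2n+1-a$, which is exactly the degree-one standard tableau $X_a$ (legitimate since $1\le a\le n-1$), so $p_{X_a}\in R_1$. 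Deleting these two blocks leaves a tableau $\Gamma''$ of degree $k-1$ that still satisfies the pairing condition $(2)$, is again $T_G$-invariant (a symmetric pair of blocks was removed), and is $Spin(2n,\mathbb{C})$-standard since it still contains no entry equal to $n$ or $n+1$, so that condition $(1)$ holds vacuously. Because $\Gamma$ is the sorted union of the standard columns $X_a$ and $\Gamma''$ and $\Gamma$ is itself standard, the straightening law (recalled in Section~\ref{section2}) produces no correction terms, giving $p_\Gamma=p_{X_a}\,p_{\Gamma''}$ with $p_{X_a}\in R_1$ and $p_{\Gamma''}\in R_{k-1}$.

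Induction on $k$ then shows that $R$ is generated by $R_1$, proving the lemma. The main obstacle I anticipate is the preliminary step eliminating the middle values $n,n+1$: this is the one place where condition $(1)$ of $Spin(2n,\mathbb{C})$-standardness (as opposed to mere semistandardness) is genuinely used, and it is exactly the tableau-level incarnation of the quadric relation $\sum_{i=1}^{n} x_i x_{2n+1-i}=0$, which is what reduces the naive $n$ invariant generators $x_i x_{2n+1-i}$ to the $n-1$ tableaux $X_1,\dots,X_{n-1}$. Once this point is settled, the extraction of an $X_a$ and the factorization are routine, and the same bookkeeping will feed directly into the later identification $X\cong(\mathbb{P}^{n-2},\mathcal{O}_{\mathbb{P}^{n-2}}(1))$ of \cref{prop1}.
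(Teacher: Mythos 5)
Your proposal is correct and follows essentially the same route as the paper's proof: both use $T_G$-invariance ($c_\Gamma(t)=c_\Gamma(2n+1-t)$) together with condition (1) of $Spin(2n,\mathbb{C})$-standardness to rule out the entries $n$ and $n+1$, and then extract the degree-one factor $X_j$ from the extreme pair-blocks $j$ and $2n+1-j$. The only differences are cosmetic (you argue from the smallest entry rather than the largest, and you spell out the induction and the standardness of the complementary tableau, which the paper leaves implicit).
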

	
	\begin{proof}
		Let $f \in R_k$ be a standard monomial. Then we show that $f = f_1 f_2,$ where $f_1$ of $f_2$ is in $R_1$.
		
		Recall that by \cref{subsection2.2}, the Young diagram $\Gamma$ associated to $f$ has the shape $p = (p_1) = (4k)$. We also denote $\Gamma$ for the Young tableau associated to the Young diagram $\Gamma.$ So, $\Gamma$ has $4k$ rows and $1$ column with strictly increasing integers from top row to bottom row.
		
		Since $f$ is $T_G$-invariant, by \cref{zeroweight}, we have \begin{equation}\label{7.1}c_\Gamma(t)=c_\Gamma(2n+1-t) 
		\end{equation}
		for all $1 \leq t \leq n,$ where $c_{\Gamma}(t)$ is the number of $t$ appearing in $\Gamma.$ Let $r_i$ be the $i$-th row of $\Gamma$ enumerated from the bottom row. We also denote the entry in the $r_i$-th row of $\Gamma$ by $r_i.$ Note that the admissible pair $(r_{2i-1}, r_{2i})$ is trivial, i.e.,  $r_{2i-1}=r_{2i}$ for all $1 \leq i \leq 2k.$ Further, by \cref{subsection2.2}(1), $\Gamma$ satisfies:  $r_{2i} \equiv r_{2i+1} \text{mod } 2,$ if $r_{2i}, r_{2i+1} \in \{n, n+1\}$ for some $1 \leq i \leq 2k-1.$ Thus, $r_{i} \notin \{n,n+1\}$ for all $1 \leq i \leq 4k.$ 
		
		Now assume that $r_1=r_2(=2n+1-j)$. Then by \cref{7.1}, $j$ appears in $\Gamma.$ Further, since $2n+1-j$ is the maximal integer appearing in $\Gamma$, it follows that $j$ is the minimal integer appearing in $\Gamma.$ Thus, we have $r_{4k-1}=r_{4k}(=j).$ Further, note that $j \neq n,$ as $r_i \notin \{n, n+1\}.$ Therefore, $X_{j}$ is a factor of $f$ for some $1 \leq j \leq n-1.$     
	\end{proof}
	\begin{corollary}
		The GIT quotient $X$ is projectively normal with respect to the descent of the $T_G$-linearized very ample line bundle $\mathcal{L}(2\omega_1)$ and is isomorphic to the projective space $(\mathbb{P}^{n-2}, \mathcal{O}_{\mathbb{P}^{n-2}}(1))$ as a polarized variety.
	\end{corollary}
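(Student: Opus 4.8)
The plan is to derive this statement directly from \cref{lemma7.1}, following exactly the pattern used in the $Spin(8,\mathbb{C})$ case of \cref{section4}, with the only additional ingredient being a Krull-dimension count.

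First I would dispose of projective normality. Since $G/P^{\alpha_1}$ is a normal projective variety and the semistable locus $(G/P^{\alpha_1})^{ss}_{T_G}(\mathcal{L}(2\omega_1))$ is open in it, the semistable locus is normal, and hence the GIT quotient $X = T_G \backslash\backslash (G/P^{\alpha_1})^{ss}_{T_G}(\mathcal{L}(2\omega_1))$ is a normal variety. By \cref{lemma7.1} the graded ring $R$ is generated as a $\mathbb{C}$-algebra by $R_1$. Feeding these two facts into the criterion recalled in \cref{section2}, namely that the homogeneous coordinate ring be integrally closed and generated in degree one, yields that $(X,\mathcal{L}(2\omega_1))$ is projectively normal with respect to the descent of $\mathcal{L}(2\omega_1)$.

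Next I would identify $X$ with projective space. Since $\{X_1,\ldots,X_{n-1}\}$ is a basis of $R_1$, we have $\dim_{\mathbb{C}} R_1 = n-1$, and by \cref{lemma7.1} the assignment $z_j \mapsto X_j$ gives a surjection of graded $\mathbb{C}$-algebras $\pi \colon \mathbb{C}[z_1,\ldots,z_{n-1}] \twoheadrightarrow R$. Because $R$ is a subring of the homogeneous coordinate ring of the irreducible variety $G/P^{\alpha_1}$, it is an integral domain, and its Krull dimension equals $\dim X + 1 = (n-2)+1 = n-1$, using that $\dim X = n-2$ (which in turn uses nonemptiness of the stable locus, already recorded above). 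A surjection between integral domains of equal Krull dimension $n-1$ must have zero kernel, since a nonzero prime ideal in the polynomial ring $\mathbb{C}[z_1,\ldots,z_{n-1}]$ would force the dimension of the quotient to drop strictly below $n-1$. Hence $\pi$ is an isomorphism and $R \cong \mathbb{C}[z_1,\ldots,z_{n-1}]$ is a polynomial algebra in $n-1$ variables. It follows that $X = \mathrm{Proj}(R) \cong \mathbb{P}^{n-2}$; and because the generators $X_j$ all sit in degree one, the descent of $\mathcal{L}(2\omega_1)$ corresponds to $\mathcal{O}_{\mathbb{P}^{n-2}}(1)$, so that $(X,\mathcal{L}(2\omega_1)) \cong (\mathbb{P}^{n-2},\mathcal{O}_{\mathbb{P}^{n-2}}(1))$ as polarized varieties.

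The substantive content lives entirely in \cref{lemma7.1}; once generation in degree one is in hand, everything here is formal. The one point I would treat carefully is the dimension bookkeeping: the argument hinges on the number of algebra generators ($n-1$) matching $\dim X + 1$, which is what forces the surjection from the polynomial ring to be injective. I do not expect a genuine obstacle, but I would verify that the stable locus is nonempty so that $\dim X$ is exactly $n-2$ (not smaller), and that $R$ is a domain, both of which follow from $R$ being a subring of the coordinate ring of an irreducible normal variety.
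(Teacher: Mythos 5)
Your argument is correct and follows essentially the same route as the paper: projective normality from normality of the quotient plus degree-one generation (\cref{lemma7.1}), and the identification with $\mathbb{P}^{n-2}$ by noting that the $n-1$ degree-one generators together with $\dim X = n-2$ force $R$ to be a polynomial ring. The paper states this more tersely; your Krull-dimension justification for injectivity of the surjection from $\mathbb{C}[z_1,\ldots,z_{n-1}]$ is exactly the detail the paper leaves implicit.
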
	
	\begin{proof}
		By \cref{lemma7.1}, $R$ is generated by $X_j$ for all $1 \leq j \leq n-1.$ Therefore, $X$ is projectively normal. On the other hand, dimension of $X$ is $n-2.$ Thus, $R$ is the polynomial algebra generated by $X_j$ for $1 \leq j \leq n-1.$ Therefore, $X$ is isomorphic to the projective space $(\mathbb{P}^{n-2}, \mathcal{O}_{\mathbb{P}^{n-2}}(1))$ as a polarized variety.
	\end{proof}
	
	\subsection{$G=Sp(2n,\mathbb{C})$ $(n \geq 2)$} Let $G=Sp(2n,\mathbb{C})$ $(n \geq 2)$ and $T_{G}$ be a maximal torus of $G.$ Let $P^{\alpha_{1}} (\supset T_G)$ be the maximal parabolic subgroup of $G$ corresponding to $\alpha_{1}.$  In this section, we study the GIT quotient of $G/P^{\alpha_1}$ with respect to the descend of the $T_G$-linearized very ample line bundle $\mathcal{L}(2\omega_1).$ 
	
	Let $w=(s_{1}s_{2}\cdots s_{n-1})(s_{n})(s_{n-1}s_{n-2} \cdots s_{1}) \in W^{P^{\alpha_1}}.$ Then note that $w$ is the minimal coset representative of the longest element of $W$ in $W/W^{P^{\alpha_1}}.$ Thus, we have $X(w)=G/P^{\alpha_1}.$ 
	
	Let $X= T_{G}\backslash \backslash (G/P^{\alpha_1})^{ss}_{T_G}(\mathcal{L}(2\omega_1)).$
	Let $R=\bigoplus\limits_{k\in\mathbb{Z}_{\geq 0}}R_k,$ where $R_{k}=H^0(G/P^{\alpha_1}, \mathcal{L}^{\otimes k}(2\omega_{1}))^{T_{G}}.$ Then we have $X=Proj(R).$ 
	
	Since $w(2\omega_1)<0,$ by \cite[Lemma 3.1, p.276]{KNS}, it follows that $(G/P^{\alpha_1})^{s}_T (\mathcal{L}(2\omega_{1})$ is non-empty.  Therefore, the dimension of $X$ is $2n-1-n(=n-1).$

	For $1 \leq j \leq n,$ let \small{\begin{equation*}X_{j}=
			\ytableausetup{boxsize=3.5em}
			\begin{ytableau}
				j \\ j  \\ \scriptstyle{2n+1-j}\\ \scriptstyle{2n+1-j} \\
			\end{ytableau}.\end{equation*}} \normalsize{Note} that $X_j$'s form a basis of $R_1.$ Further, we have the following lemma.
	\begin{lemma}\label{lemma7.3}
		The graded $\mathbb{C}$-algebra $R$ is generated by $R_{1}.$
	\end{lemma}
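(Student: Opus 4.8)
The plan is to mirror the proof of \cref{lemma7.1}, the argument being in fact slightly simpler in the symplectic case. Let $f \in R_k$ be a standard monomial and let $\Gamma$ be the associated $Sp(2n,\mathbb{C})$-standard Young tableau. By \cref{subsection2.3}, the underlying Young diagram has shape $p(\lambda)=(4k)$, so $\Gamma$ is a single column of $4k$ boxes whose entries are non-decreasing from top to bottom. As in \cref{lemma7.1}, it suffices to exhibit a sub-tableau $\Gamma'$ of $\Gamma$ with $p_{\Gamma'}\in R_1$ so that $f$ factors as $f=p_{\Gamma'}\cdot f'$ with $f'\in R_{k-1}$; an induction on $k$ then finishes the proof.

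First I would record the two combinatorial constraints. Enumerating the rows $r_1,\ldots,r_{4k}$ from the bottom, $Sp(2n,\mathbb{C})$-standardness forces the admissible pairs to be trivial, i.e. $r_{2i-1}=r_{2i}$ for all $1\le i\le 2k$, while non-decreasingness gives $r_1\ge r_2\ge\cdots\ge r_{4k}$. Since $f$ is $T_G$-invariant, \cref{zeroweight} yields $c_\Gamma(t)=c_\Gamma(2n+1-t)$ for all $1\le t\le 2n$, where $c_\Gamma(t)$ counts the occurrences of $t$ in $\Gamma$.

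Next I would locate a complementary pair of extremal rows. Put $j:=r_{4k}$, the minimal entry of $\Gamma$. The invariance relation shows that $2n+1-j$ also occurs, and minimality of $j$ forces $2n+1-j\ge j$, hence $1\le j\le n$; the same relation applied to the maximal entry $r_1$ shows $r_1=2n+1-j$ (if $r_1>2n+1-j$, then $2n+1-r_1<j$ would occur, contradicting minimality of $j$). Combining this with triviality of the admissible pairs gives $r_{4k-1}=r_{4k}=j$ and $r_1=r_2=2n+1-j$. The four rows $r_1,r_2,r_{4k-1},r_{4k}$ therefore constitute exactly the column $X_j$ (reading $j,j,2n+1-j,2n+1-j$ from top to bottom), so $X_j$ is a factor of $f$ for some $1\le j\le n$.

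The point to emphasize---and where this differs from \cref{lemma7.1}---is that the symplectic standardness of \cref{subsection2.3} imposes no congruence condition of the type appearing in the orthogonal case, so no entry is forbidden from $\{n,n+1\}$; consequently $j$ is allowed to take every value in $\{1,\ldots,n\}$, matching the full list $X_1,\ldots,X_n$ of generators of $R_1$. The only genuinely delicate step is justifying that extracting the degree-one sub-tableau $X_j$ really gives an identity $f=p_{X_j}\cdot f'$ of standard monomials rather than merely a combinatorial containment of shapes; this is where I would appeal to the single-column standard monomial formalism of \cref{subsection2.3} exactly as in \cref{lemma7.1}, after which the induction on $k$ is immediate.
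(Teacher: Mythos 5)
Your proposal is correct and follows essentially the same argument as the paper: use the trivial admissible pairs, the $T_G$-invariance condition $c_\Gamma(t)=c_\Gamma(2n+1-t)$, and the extremal rows of the single-column tableau to extract a factor $X_j$ with $1\le j\le n$. The only cosmetic difference is that you anchor the argument at the minimal entry $r_{4k}$ rather than the maximal entry $r_1$ as the paper does, which changes nothing of substance.
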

	
	\begin{proof}
		Let $f \in R_k$ be a standard monomial. We show that $f = f_1 f_2,$ where $f_1$ or $f_2$ is in $R_1$.
		
		Recall that by \cref{subsection2.3}, the Young diagram $\Gamma$ associated to $f$ has the shape $p = (p_1) = (4k)$. We also denote $\Gamma$ for the Young tableau associated to the Young diagram $\Gamma.$ So, $\Gamma$ has $4k$ rows and $1$ column with strictly increasing integers from top row to bottom row.
		
		Since $f$ is $T_G$-invariant, by \cref{zeroweight}, we have \begin{equation}\label{7.2}c_\Gamma(t)=c_\Gamma(2n+1-t) 
		\end{equation}
		for all $1 \leq t \leq n,$ where $c_{\Gamma}(t)$ is the number of $t$ appearing in $\Gamma.$ Let $r_i$ be the $i$-th row of $\Gamma$ enumerated from the bottom row. We also denote the entry in the $r_i$-th row of $\Gamma$ by $r_i.$ Note that the admissible pair $(r_{2i-1}, r_{2i})$ is trivial, i.e.,  $r_{2i-1}=r_{2i}$ for all $1 \leq i \leq 2k.$   
		
		Now assume that $r_1=r_2(=2n+1-j)$. Then by \cref{7.2}, $j$ appears in $\Gamma.$ Further, since $2n+1-j$ is the maximal integer appearing in $\Gamma$, it follows that $j$ is the minimal integer appearing in $\Gamma.$ Thus, we have $r_{4k-1}=r_{4k}(=j).$ Therefore, $X_{j}$ is a factor of $f$ for some $1 \leq j \leq n.$

	\end{proof}
	\begin{corollary}
		The GIT quotient $X$ is projectively normal with respect to the descent of the $T_G$-linearized very ample line bundle $\mathcal{L}(2\omega_1)$ and is isomorphic to the projective space $(\mathbb{P}^{n-1}, \mathcal{O}_{\mathbb{P}^{n-1}}(1))$ as a polarized variety.
	\end{corollary}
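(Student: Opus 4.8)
The plan is to mirror the structure of the corresponding corollary in the $Spin(2n,\mathbb{C})$ case, since \cref{lemma7.3} has already carried out the essential combinatorial work. First I would record that $X$ is a normal variety: the partial flag variety $G/P^{\alpha_1}$ is normal, so its open $T_G$-semistable locus $(G/P^{\alpha_1})^{ss}_{T_G}(\mathcal{L}(2\omega_1))$ is normal, and the GIT quotient of a normal variety by the reductive group $T_G$ is again normal. Combined with \cref{lemma7.3}, which asserts that the homogeneous coordinate ring $R=\bigoplus_{k}R_k$ is generated as a $\mathbb{C}$-algebra by $R_1$, the criterion for projective normality recalled in \cref{section2} immediately yields that $X$ is projectively normal with respect to the descent of $\mathcal{L}(2\omega_1)$.

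Next I would pin down the isomorphism type by a dimension count. The sections $X_1,\dots,X_n$ form a basis of $R_1$, so $\dim_{\mathbb{C}}R_1=n$ and the surjection $\mathbb{C}[X_1,\dots,X_n]\twoheadrightarrow R$ of graded algebras exhibits $X=\mathrm{Proj}(R)$ as a closed subvariety of $\mathbb{P}^{n-1}$. Because $w(2\omega_1)<0$, the stable locus is nonempty and $\dim X=\ell(w)-n=(2n-1)-n=n-1$, so the Krull dimension of $R$ equals $n$. Since this matches the number of algebra generators, the surjection above must be an isomorphism; that is, $R$ is the full polynomial algebra on $X_1,\dots,X_n$ with no surviving relations.

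Finally, this identification gives $X=\mathrm{Proj}(\mathbb{C}[X_1,\dots,X_n])\cong\mathbb{P}^{n-1}$, and since each generator $X_j$ has degree one, the descended very ample line bundle corresponds to $\mathcal{O}_{\mathbb{P}^{n-1}}(1)$; hence $X\cong(\mathbb{P}^{n-1},\mathcal{O}_{\mathbb{P}^{n-1}}(1))$ as a polarized variety. There is no genuine obstacle beyond \cref{lemma7.3}: the only point requiring care is the dimension count, namely that having exactly $n$ algebra generators together with Krull dimension $n$ forces algebraic independence, so that no Pl\"ucker-type straightening relation persists in $R$. This is precisely where the symplectic $G/P^{\alpha_1}$ behaves more simply than the $Spin(8n,\mathbb{C})$ Schubert varieties of \cref{section5}, for which genuine relations among the degree-one generators remained.
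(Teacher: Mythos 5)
Your proposal is correct and follows essentially the same route as the paper: projective normality from \cref{lemma7.3} together with normality of the quotient, and then the identification with $\mathbb{P}^{n-1}$ by comparing the number of degree-one generators ($n$) with the dimension of $X$ ($\ell(w)-n=n-1$), forcing $R$ to be the polynomial algebra on $X_1,\dots,X_n$. Your explicit remark that $n$ generators plus Krull dimension $n$ forces algebraic independence is exactly the (unstated) justification behind the paper's one-line "Thus, $R$ is the polynomial algebra" step.
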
	
	\begin{proof}
		By \cref{lemma7.3}, $R$ is generated by $X_j$ for all $1 \leq j \leq n.$ On the other hand, dimension of $X$ is $n-1.$ Thus, $R$ is the polynomial algebra generated by $X_j$ for all $1 \leq j \leq n.$ Therefore, $X$ is isomorphic to  $(\mathbb{P}^{n-1}, \mathcal{O}_{\mathbb{P}^{n-1}}(1))$ as a polarized variety. 
	\end{proof}

	{\bf Acknowledgements:} We thank Professor S. Senthamarai Kannan for his constant encouragement and various helpful suggestion.

\end{document}